\def\ps@pprintTitle{%
 \let\@oddhead\@empty
 \let\@evenhead\@empty
 \def\@oddfoot{}%
 \let\@evenfoot\@oddfoot}
\newcommand\numberthis{\addtocounter{equation}{1}\tag{\theequation}}
\newtheorem{thm}{Theorem}[section]
\newtheorem{lem}[thm]{Lemma}
\newtheorem{prop}[thm]{Proposition}
\newtheorem{rem}[thm]{Remark}
\newtheorem{cor}[thm]{Corollary}
\newcommand{\BigO}[1]{\ensuremath{\operatorname{O}\left(#1\right)}}
\renewcommand{\geq}{\geqslant}
\renewcommand{\leq}{\leqslant}
\newcommand{\BigOH}[1]{\ensuremath{\operatorname{O}_{H}\left(#1\right)}}
\newcommand{\BigOP}[1]{\ensuremath{\operatorname{O}_{P}\left(#1\right)}}
\newcommand{\BigOqP}[1]{\ensuremath{\operatorname{O}_{q}\left(#1\right)}}
\renewcommand{\P}{\boldsymbol{\phi}}
\renewcommand{\a}{\boldsymbol{\alpha}}
\def\mathbi#1{\textbf{\em #1}}
\begin{document}

\begin{frontmatter}

\title{On the distribution of polynomial Farey points and Chebyshev's bias phenomenon}

\author[mymainaddress]{Bittu Chahal}
\ead{bittui@iiitd.ac.in}

\author[mymainaddress]{Sneha Chaubey}
\ead{sneha@iiitd.ac.in}

\address[mymainaddress]{Department of Mathematics, IIIT Delhi, New Delhi 110020}

\begin{abstract}

We study two types of problems for polynomial Farey fractions. For a positive integer $Q$, and polynomial $P(x)\in\mathbb{Z}[X]$ with $P(0)=0$, we define polynomial Farey fractions as \[\mathcal{F}_{Q,P}:=\left\{\frac{a}{q}: 1\leq a\leq q\leq Q,\ \gcd (P(a),q)=1\right\}.\]
The classical Farey fractions are obtained by considering $P(x)=x$. In this article, we determine the global and local distribution of the sequence of polynomial Farey fractions via discrepancy and pair correlation measure, respectively. In particular, we establish that the sequence of polynomial Farey fractions is uniformly distributed modulo one and show that the limit superior of the pair correlation measure of $(\mathcal{F}_{Q,P})_{Q\ge1}$ is bounded. For the specific polynomial $P(x)=x(x+1)$, we show the existence of the limiting pair correlation measure of $(\mathcal{F}_{Q,P})_{Q\ge1}$ and also provide an explicit formula for the pair correlation function which is non-Poissonian. Further, restricting the polynomial Farey denominators to certain subsets of primes, we explicitly find the pair correlation measure and show it to be Poissonian. Finally, we study Chebyshev's bias type of questions for the classical and polynomial Farey denominators along arithmetic progressions and obtain an $\Omega$-result for the error term of its counting function.  

\iffalse
We introduce polynomial Farey fractions by restricting the usual Farey denominators with some divisibility constraint. These are generalizations of the classical Farey fractions, which, upon taking the identity polynomial, reduce to the classical case. In this article, we are interested in the distribution of the sequence of polynomial Farey fractions. We examine both the global and local statistics, in particular their discrepancy and pair correlation statistics. We prove that the pair correlation measure of the sequence of polynomial Farey fractions exists and provide an upper bound. Further, restricting the polynomial Farey denominators to certain subsets of primes, we explicitly find the pair correlation measure and show it to be Poissonian. Finally, we study comparative prime number theory problems for Farey fractions and polynomial Farey fractions, in particular, Chebyshev's bias in this setting. 

We investigate the pair correlation of the polynomial Farey fractions by proving that the pair correlation measure of the sequence of polynomial Farey fractions exists and provide an upper bound for the pair correlation measure. For the polynomial Farey fractions with prime denominators, we show that the pair correlation function is Poissonian. Moreover, we discuss the sign changes of the counting arithmetic function, which counts the number of Farey fractions with denominators in some arithmetic progressions.
\fi
\end{abstract}

\begin{keyword}
Farey fractions, discrepancy, pair correlation, exponential sums, polynomial congruences, Chebyshev's bias.
 \MSC[2020] 11B57 \sep 11J71 \sep 11K38 \sep 11L07 \sep 11R42.
\end{keyword}

\end{frontmatter}

\section{Introduction and main results}

%Questions on the distribution of sequences have been studied throughout the history of number theory. 
In this article, we focus on the global and local distribution of sequences emerging from the Farey subdivision of the unit interval. 
 Let $\mathbi{c}=(c_\nu,c_{\nu-1},\ldots,c_1)\in\mathbb{Z}^{\nu}$ be a fixed non-zero vector and $P(x)=c_{\nu}x^{\nu}+c_{\nu-1}x^{\nu-1}+\cdots+c_1x$ be a polynomial. Denote 
 \[\mathcal{F}_{Q,P}:=\left\{\frac{a}{q}: 1\leq a\leq q\leq Q,\ \gcd (P(a),q)=1\right\}.\numberthis\label{F_Q}\]If $P(x)=x(x+1)$ then for instance,
\[\mathcal{F}_{5,P}=\left\{\frac{1}{5}, \frac{1}{3}, \frac{2}{5},\frac{3}{5}, 1 \right\}. \]
The motivation to define this sequence comes from a geometric point of view via visibility. The classical Farey fractions of order $Q$ are given by
\[\mathcal{F}_Q=\left\{\frac{a}{q}: 1\le a\le q\le Q,\ \gcd(a,q)=1\right\}.\]
They are in a one-to-one correspondence with visible lattice points in the triangle with vertices $(0,0), (0,Q)$, and $(Q,Q)$ along straight lines passing through the origin. A point $(a,b)\in\mathbb{Z}^2$ is called visible from the origin if there is no other point of $\mathbb{Z}^2$ on the straight line joining the origin and point $(a,b)$. Hence, the sequence $\mathcal{F}_Q$ can be viewed in terms of visible lattice points: 
\[\mathcal{F}_Q=\{a/q\ |\ 1\leq a\leq q\leq Q;\ (a,q)\ \text{is visible from origin }\}.\]
Several mathematicians \cite{Roger, Cobeli, Nowak} have studied various problems on the distribution of visible lattice points in planar and convex domains and their generalizations. Recently, the authors in \cite{Chaubey} introduced polynomial visible lattice points. It is defined as: let $\mathbi{c}=(c_n,c_{n-1},\ldots,c_1)\in\mathbb{Z}^n$ be a fixed vector with $c_n\ne 0,\ c_i\geq 0$ for all $1\leq i\leq n$, and $\gcd(c_n,c_{n-1},\ldots,c_1)=1$, let $F(\mathbi{c})=\{y=rP_{\mathbi{c}}(x)\ |\ r\in\mathbb{Q}^+\}$, where $P_{\mathbi{c}}(x)=c_nx^n+c_{n-1}x^{n-1}+\cdots+c_1x$. A point $(a,b)\in\mathbb{N}^2$ is called $F(\mathbi{c})$-visible if there is no other lattice point on the curve $y=rP_{\mathbi{c}}(x)$ joining the origin and the point $(a,b)$. Denote the set of all $F(\mathbi{c})$-visible points in $\mathbb{N}^2$ by $V(\mathbi{c})$. It is proved in \cite{Chaubey} that the natural density of the polynomial visible lattice points is equal to one. Note that the natural density of the polynomial visible lattice points for the curve $y=mx^b$ for $b\ge 1$ is equal to $\dfrac{1}{\zeta(b+1)}$ \cite{MR3836421}. We denote the set of fractions $a/q\in\mathcal{F}_Q$ such that $(a,q)$ is $F(\mathbi{c})$-visible as:
\[\mathscr{F}_{Q,V}:=\left\{\frac{a}{q}\ |\ 1\leq a\leq q\leq Q,\ \gcd(a,q)=1,\ \text{and}\ (a,q)\in V(\mathbi{c}) \right\}. \]
%If $\mathbi{c}=(1,1,0,\ldots,0)$ then, for instance,
%\[\mathscr{F}_{5,V}=\left\{\frac{1}{5}, \frac{1}{4}, \frac{1}{3}, \frac{2}{5}, \frac{1}{2}, \frac{3}{5}, 1 \right\}.\] 
This gives a relation between $\mathscr{F}_{Q,V}$ and the visible lattice points through polynomial curves in the triangle with vertices $(0,0),\ (0,Q)$, and $(Q,Q)$. It is natural to study the distributional properties of the above sequence but it turns out to be a difficult problem with no arithmetic description of the visibility property in this setting.  Instead, we focus on a subset of $\mathcal{F}_{Q,V}$ defined in \eqref{F_Q}  as
\[\mathcal{F}_{Q,P}=\left\{\frac{a}{q} \ | \ 1\leq a\leq q\leq Q,\ \gcd (P(a),q)=1\right\},\]
which is a subset of fractions that are in one-to-one correspondence with visible lattice points through polynomial curves. To see that $\mathcal{F}_{Q,P}\subsetneq\mathcal{F}_{Q,V}$, take $\frac{a}{q}\in\mathcal{F}_{Q,P}$
%, then $\gcd(P(a),q)=1$. 
and choose $r=\frac{q}{P(a)}\in\mathbb{Q}^+$, so that $q=rP(a)$. This implies that the point $(a,q)$ lies on the curve $y=rP(x)$. 
Let $(a^{\prime},q^{\prime})\in\mathbb{N}^2$ be such that $q^{\prime}=rP(a^{\prime})$ and $a^{\prime}<a$. This in turn implies that $P(a)|qP(a^{\prime})$, but since $\gcd(P(a),q)=1$, it follows that $P(a)|P(a^{\prime})$ which is not true since $a^{\prime}<a$. Therefore, $(a,q)\in V(\mathbi{c})$, which implies that $\frac{a}{q}\in\mathcal{F}_{Q,V}$. The inclusion is strict because for example, if $P(x)=x(x+1)$, then clearly $\frac{1}{2}\in\mathcal{F}_{Q,V}$ as $(1,2)\in V((1,1))$ since $2=1\cdot P(1)$. However, $\frac{1}{2}\notin\mathcal{F}_{Q,P}$ since $\gcd(P(1),2)\ne 1$.

Our interest in this article lies in understanding the distribution of the sequence in $\mathcal{F}_{Q,P}$. In the first part, we consider the global distribution of the sequence. One aspect of studying the global distribution of a sequence is via equidistribution.  
%Equidistribution modulo one is concerned with the distribution of fractional parts of real numbers in $[0,1]$. 
%The sequence $(x_n)_{n=1}^{\infty}$ of real numbers is said to be equidistributed modulo one, if every interval $I\subseteq[0,1)$, we have
%\[\lim_{N\to\infty}\frac{1}{N}\#\{1\leq n\leq N\ |\ \{x_n\}\in I \}=|I|, \]
%where $\{x_n\}$ denotes the fractional part of $x_n$. 
%The development of the theory of equidistribution started with the classical work of Weyl \cite{Weyl}. 
While equidistribution modulo one is a qualitative asymptotic property, it is
natural to have a corresponding quantitative concept
which applies to finite sequences (or finite truncations of infinite sequences) such as the
concept of discrepancy of a sequence, which is defined as follows:

%Another direction to study the equidistribution of a sequence is to see if it has a small discrepancy.
For any $\alpha\in[0,1]$, let $A(\alpha;{N})$ be the number of first $N$ terms of the sequence $(x_n)_{n=1}^{\infty}$ modulo one that do not exceed $\alpha$. Then the absolute discrepancy of the sequence $(x_n)_{n=1}^{\infty}$ is given by
\[D_{{N}}(x_1,\ldots, x_n)=\sup_{0\leq \alpha\leq 1}R_{{N}}(\alpha),\numberthis\label{D_1}\]
where
\[R_{{N}}(\alpha)=\left|\frac{A(\alpha;{N})}{{N}}-\alpha \right|.\numberthis\label{R_N}\]

%Let $Q$ be a positive integer. The Farey sequence of order $Q$ is defined as
%\[\mathcal{F}_Q:=\left\{\frac{a}{q} : 1\leq a\leq q\leq Q, \gcd(a,q)=1 \right\}. \]
 %The study of the Farey sequence is of independent interest because of its role in the problems related to the Diophantine approximation and circle method, and its connection to the Riemann zeta function motivates their study. 
 The classical work of Franel \cite{Franel} and Landau \cite{Landau} showed that the quantitative statement about the uniform distribution of Farey fractions and the Riemann hypothesis are equivalent. In particular, Franel proved that the supremum of the real part of the zeros of the Riemann zeta function is the infimum of $\theta$ for which the following estimate holds
\[\sum_{i=1}^{N(Q)}R_{N(Q)}^2(\gamma_i)=\BigO{Q^{-2+2\theta}}, \]
where $\gamma_i\in\mathcal{F}_Q,\ 1\leq i\leq N(Q)$ and $N(Q)=\#\mathcal{F}_Q$. Specifically, the Riemann hypothesis is equivalent to the asymptotic formula
\[\sum_{i=1}^{N(Q)}R_{N(Q)}^2(\gamma_i)=\BigO{Q^{-1+\epsilon}},\ \text{for all}\ \epsilon>0. \]
Landau \cite{Landau} gave a similar version by proving that the Riemann hypothesis is true if and only if, for all $\epsilon>0$,
\[\sum_{i=1}^{N(Q)}R_{N(Q)}(\gamma_i)=\BigO{Q^{1/2+\epsilon}}.\]
A lot of efforts have been made to prove the above estimates in terms of discrepancy. The first significant result concerning the distribution of Farey fractions was established by Erd\H{o}s et al. \cite{Erdos}. Neville \cite{Neville} proved that $D_{N(Q)}(\mathcal{F}_Q)=\BigO{\log Q/Q}$. Thereafter, it was improved by Niederreiter \cite{Niederreiter} to $D_{N(Q)}(\mathcal{F}_Q)\asymp{1/Q}$ for all $Q\geq 1$. A closed-form formula for the discrepancy was later established by \cite{Dress} who showed that $D_{N(Q)}(\mathcal{F}_Q)=1/Q$ holds for every $Q$. Discrepancy of Farey sequence with certain congruence constraints on denominators appeared in the works of Alkan et al. \cite{MR2273359, MR2275343} and Ledoan \cite{MR3871604} too. 
%studied the discrepancy of Farey fractions with some congruence constraints on denominators and established lower and upper bounds for the discrepancy. 
 %To established the result for the discrepancy of polynomial Farey fractions, we need to obtain the asymptotic formula for the Merten's function twisted by the arithmetic function which counts the integer solutions $1\leq m\leq n$ of the polynomial congruence $P(m)\equiv 0\pmod n$. A required asymptotic formula is obtained using the zero-free region for Dedekind zeta function.)}
Our first main result establishes bounds on the discrepancy of the polynomial Farey sequence \eqref{F_Q}. Denote $D_{\mathcal{N}_{Q,P}}(\mathcal{F}_{Q,P})$ as the discrepancy of $(\mathcal{F}_{Q,P})_{Q\ge 1}$, where $\mathcal{N}_{Q,P}=\#\mathcal{F}_{Q,P}$. %The key idea in obtaining the best possible bounds for the discrepancy in the case of polynomial Farey fractions is to get non-trivial bound for twists of Merten's function by the arithmetic function $f_P(n)$ \eqref{f_P}.

%Alkan et al. \cite{MR2273359} studied the discrepancy of Farey fractions with $\mathcal{B}$-free denominators in an arithmetic progression. In particular, they proved a lower and upper bound for the discrepancy of Farey fractions with $\mathcal{B}$-free denominators for a set $\mathcal{B}$ of prime numbers such that $\sum_{p\in\mathcal{B}}{1/p^{\sigma}}$ for some $\sigma>0$. The discrepancy of Farey fractions with denominators in the union of arithmetic progressions was studied by Alkan et al. \cite{MR2275343}, where they established lower and upper bounds for the discrepancy. Ledoan \cite{MR3871604} studied the discrepancy of Farey fractions in an arbitrary subinterval of $[0,1]$ with denominators in an arithmetic progression. Alkan et al. \cite{MR2273359, MR2275343} and Ledoan \cite{MR3871604} studied the discrepancy of Farey fractions with denominators in an arithmetic progression and $\mathcal{B}$-free, with Farey denominators are in the union of arithmetic progressions, and Farey fractions in an arbitrary subinterval of $[0,1]$ with denominators in an arithmetic progression, respectively.

\begin{thm}\label{Disc}
Let $\nu\geq 2$ be an integer and let $\mathbi{c}=(c_\nu,c_{\nu-1},\ldots,c_{1})\in\mathbb{Z}^{\nu}$ be a fixed non-zero vector and $P(x)=c_{\nu}x^{\nu}+c_{\nu-1}x^{\nu-1}+\cdots+c_1x$ be a polynomial with non-zero discriminant. For all $Q\geq 1$, we have
    \[\frac{1}{Q}\ll D_{\mathcal{N}_{Q,P}}(\mathcal{F}_{Q,P})\ll{\frac{(\log Q)^J}{Q}},\]
  where $J$ is the number of distinct irreducible factors of the polynomial $P(x)\in\mathbb{Z}[x]$ and the implied constants depend on the polynomial $P(x)$.
\end{thm}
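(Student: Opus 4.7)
My plan is to show $|A(\alpha; \mathcal{N}_{Q,P}) - \alpha \mathcal{N}_{Q,P}| \ll_P Q$ uniformly in $\alpha \in [0,1]$, where $A(\alpha; \mathcal{N}_{Q,P})$ counts the elements of $\mathcal{F}_{Q,P}$ not exceeding $\alpha$. Paired with the asymptotic $\mathcal{N}_{Q,P} \asymp_P Q^{2}$ (itself a by-product of the same Möbius expansion specialised at $\alpha=1$), this yields $D_{\mathcal{N}_{Q,P}}(\mathcal{F}_{Q,P}) \ll_P 1/Q$. The matching lower bound is immediate: since the smallest element of $\mathcal{F}_{Q,P}$ is $\geq 1/Q$, taking $\alpha$ slightly below $1/Q$ forces $A(\alpha; \mathcal{N}_{Q,P}) = 0$, whence $R_{\mathcal{N}_{Q,P}}(\alpha) = \alpha \gtrsim 1/Q$.

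For the upper bound I would detect the coprimality condition by Möbius inversion,
\[ A(\alpha; \mathcal{N}_{Q,P}) \;=\; \sum_{q \leq Q} \sum_{d \mid q} \mu(d)\, \#\{1 \leq a \leq \alpha q : d \mid P(a)\}, \]
and introduce the multiplicative function $\rho_P(d) := \#\{a \bmod d : d \mid P(a)\}$, which satisfies $\rho_P(p) \leq \nu$ for every prime $p$ not dividing the discriminant of $P$. The inner count equals $\rho_P(d) \lfloor \alpha q/d \rfloor + O(\rho_P(d))$; setting $q = dm$ and splitting $\lfloor \alpha m\rfloor = \alpha m - \{\alpha m\}$ extracts the desired main term $\alpha \mathcal{N}_{Q,P}$ (by comparison with the $\alpha = 1$ identity) and leaves the fluctuating remainder
\[ E(\alpha, Q) \;=\; -\sum_{d \leq Q} \mu(d)\, \rho_P(d) \sum_{m \leq Q/d} \{\alpha m\} \;+\; O_P(Q). \]

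The crux is to show $|E(\alpha, Q)| \ll_P Q$ uniformly in $\alpha$, and this is where the main obstacle lies: a trivial estimate $\{\alpha m\} \leq 1$ only gives $Q(\log Q)^{c}$, so genuine cancellation has to be exploited. I would write $\{\alpha m\} = \tfrac{1}{2} + (\{\alpha m\} - \tfrac{1}{2})$; the constant piece collapses via $\lfloor Q/d\rfloor = Q/d + O(1)$ to $\tfrac{Q}{2}\sum_{d \leq Q} \mu(d) \rho_P(d)/d$, a \emph{twist of Mertens' function} by the weight $f_P(d) := \rho_P(d)/d$, and proving that this partial sum is $O_P(1)$ is the essential analytic input. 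Concretely, one studies the Dirichlet series $\sum_n \mu(n) \rho_P(n)/n^{s} = \prod_p (1 - \rho_P(p)/p^{s})$ and exploits its analytic continuation and non-vanishing on $\Re s = 1$, properties furnished by the Dedekind-type zeta function attached to the splitting data of $P$ --- which is precisely where the non-zero discriminant hypothesis becomes indispensable. The oscillating piece $\{\alpha m\} - \tfrac{1}{2}$ is treated via its Fourier expansion, converting each inner sum into exponential sums $\sum_{m \leq M} e(h\alpha m) \ll \min(M, \|h\alpha\|^{-1})$; combining Erdős--Turán with the twisted-Mertens cancellation absorbs the residual $(\log Q)^{c}$ loss and delivers the required $O_P(Q)$ uniformly in $\alpha$, closing the proof.
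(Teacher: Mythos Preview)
Your Möbius set-up, your lower-bound argument (take $\alpha$ just below $1/Q$), and your identification of a twisted Mertens sum as the analytic crux are all correct and agree with the paper. Where you diverge is in the handling of the remainder
\[
E(\alpha,Q)=-\sum_{d\le Q}\mu(d)\,\rho_P(d)\sum_{m\le Q/d}\{\alpha m\},
\]
and here your plan is over-engineered. The paper neither splits $\{\alpha m\}=\tfrac12+(\{\alpha m\}-\tfrac12)$ nor invokes Erd\H{o}s--Tur\'an or exponential-sum bounds. It simply \emph{reverses the order of summation},
\[
E(\alpha,Q)=-\sum_{m\le Q}\{\alpha m\}\sum_{d\le Q/m}\mu(d)\,\rho_P(d),
\]
bounds $\{\alpha m\}\le 1$ trivially, and applies the twisted Mertens estimate in the unweighted form
\[
\Bigl|\sum_{d\le x}\mu(d)\,\rho_P(d)\Bigr|\ \ll\ x\exp\!\bigl(-c\sqrt{\log x}\bigr),
\]
which it proves beforehand via the Dedekind-zeta factorisation you allude to (this is exactly where the non-zero discriminant hypothesis enters). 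One then gets $|E(\alpha,Q)|\le\sum_{m\le Q}\bigl|\sum_{d\le Q/m}\mu(d)\rho_P(d)\bigr|\ll Q$ directly and uniformly in $\alpha$, with no Fourier analysis whatsoever.

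Your detour, as written, does not close. After the split, the constant half contributes (via $\lfloor Q/d\rfloor=Q/d-\{Q/d\}$) a leftover $\tfrac12\sum_{d\le Q}\mu(d)\rho_P(d)\{Q/d\}$; estimating $\{Q/d\}\le 1$ gives $\ll\sum_{d\le Q}|\mu(d)|\rho_P(d)\asymp Q(\log Q)^{J-1}$, already too big when $P$ has $J\ge 2$ irreducible factors, and the Fourier treatment of the oscillating half does nothing to repair this. The only way to rescue that term is, again, to swap the $d$- and $m$-sums and feed the inner sum into the Mertens bound --- at which point the split was superfluous. So the essential input is the one you name, but it should be deployed one summation-swap earlier, and in the form $\sum_{d\le x}\mu(d)\rho_P(d)\ll x\exp(-c\sqrt{\log x})$ rather than the weighted variant $\sum_{d\le Q}\mu(d)\rho_P(d)/d=O(1)$.
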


From Theorem \ref{Disc}, it follows that $R_{\mathcal{N}_{Q,P}}(\alpha)\to 0$ as $Q\to\infty$ for all $\alpha\in[0,1]$. Consequently, we obtain the following corollary.
\begin{cor}
  Let $\nu\geq 2$ be an integer and let $\mathbi{c}=(c_\nu,c_{\nu-1},\ldots,c_{1})\in\mathbb{Z}^{\nu}$ be a fixed non-zero vector and $P(x)=c_{\nu}x^{\nu}+c_{\nu-1}x^{\nu-1}+\cdots+c_1x$ be a polynomial with non-zero discriminant. Then the Farey sequence $(\mathcal{F}_{Q,P})_{Q\geq 1}$ is uniformly distributed modulo one.
\end{cor}

To understand the finer distribution of a sequence, such as randomness, local clustering, and periodic patterns of a sequence, quantities such as gap distribution, $k$-level correlation measure can be studied. %The notion of uniform distribution does not provide insights into the finer structures, such as randomness, local clustering, and periodic patterns of the sequences. The study of the fine-scale statistics of a sequence of $N$ elements 
These deal with the distribution on the scale of mean gap $1/N$. 
%To investigate the fine-scale statistics of a sequence, one can study the gap distribution between elements of the sequence, as well as $k$-level correlation measure. 
%Hall \cite{Hall} studied the  $h$-th level spacing measure of Farey sequences for $h=1$, and for $h\geq 2$  it has been studied by Augustin et al. \cite{Augustin}. 
In the next section, we compute the 2-level or the pair correlation of polynomial Farey fractions. 

\subsection{Correlations of Farey fractions}
The motivation to study the correlations of sequences comes from applications in physics, where physicists study the spectra of high energies. Lately, there has been a lot of interest in these notions in applications in number theory, mathematical physics, and probability theory. It has particularly attracted significant interest in number theory following Montgomery \cite{Montgomery} and Hejhal's \cite{Hejhal} work on the correlations of the zeros of the Riemann zeta function, and Rudnick and Sarnak's \cite{Rudnick} work on the correlations of zeros of $L$-functions. Let ${F}$ be a finite set of $\mathcal{N}$ elements in the unit interval $[0,1]$ and for every interval $I\subset\mathbb{R}$, define
\[\mathcal{S}_{{F}}(I):=\frac{1}{\mathcal{N}}\#\left\{(a,b)\in{F}^2 : a\ne b, a-b\in\frac{1}{\mathcal{N}}I+\mathbb{Z}  \right\}. \]
The limiting pair correlation measure of an increasing sequence $({F}_n)_n$, for every interval $I$ is given (if it exists) by
\[\mathcal{S}(I)=\lim_{n\to\infty}\mathcal{S}_{{F}_n}(I).\]
If
\[\mathcal{S}(I)=\int_{I}g(x)dx, \numberthis\label{g(x)}\]
then $g$ is called the limiting pair correlation function of $({F}_n)_n$. The pair correlation is said to be Poissonian if $g(x)\equiv 1$.

For the usual Farey fractions, Boca and Zaharescu \cite{BocaF} proved that the pair correlation is non-Poissonian and it is given by 
\[g(\lambda)=\frac{6}{\pi^2\lambda^2}\sum_{1\leq k<\frac{\pi^2\lambda}{3}}\phi(k)\log\frac{\pi^2\lambda}{3k}\numberthis\label{g(lambda)}.\]
%\textcolor{red}{What was used (the formula in \eqref{g(lambda)} was used)} 
The formula in \eqref{g(lambda)} appeared in the main term of the second moment of a large sieve matrix \cite{Radziwil}. Results on the pair correlation for Farey sequences with divisibility restrictions on the Farey denominators were obtained in \cite{Siskaki} and \cite{Zaharescu}.
%This was also used in \cite{Radziwil} to study its connection with the second moment of the large sieve matrix in \cite{Olivier}.
%to numbers 
%The pair correlation of Farey fractions with denominators 
%coprime to $B_Q$ (monotonic increasing sequence of square-free numbers), %the monotonic increasing sequence of square-free numbers with the condition that $B_{Q_1}|B_{Q_2}$ if $Q_1<Q_2$ 
%Xiong and Zaharescu  and 
%gave a formula for the pair correlation . 
%Boca and Siskaki 
%measured the pair correlation of the Farey fractions with some congruence conditions on denominators. 
In \cite{Chahal}, the authors extended this study for Farey fractions with square-free denominators and proved that its pair correlation function exists and it is non-Poissonian. 
%Moreover, we provide an explicit formula for the limiting pair correlation function. 
%It is interesting to note that the repulsion near zero even stronger than the repulsion of the pair correlation function of zeros of the Riemann zeta function \cite{Montgomery}, regardless of restricting the denominators to square-free. 
Our next main result concerns the pair correlation statistics for polynomial Farey fractions. %A novel difficulty in proving a closed-form formula for the pair correlation measure in this case is establishing a closed-form formula for an exponential sum over polynomial Farey fractions. Even conditionally, it seems quite far to obtain a main term as, in the appendix, we discuss a result for the twisted exponential sum by M\"{o}bius function.
%We prove a bound for the pair correlation measure of polynomial Farey fractions instead of an closed form formula as it is quite challenging to obtain an asymptotic formula with a main term for the exponential sum over polynomial Farey fractions.
%Let $\nu\geq 2$ be an integer and let $\mathbi{c}=(c_\nu,\ldots,c_{1})\in\mathbb{Z}^{\nu}$ be a fixed vector with $c_{\nu}\geq 1, c_i\geq 0$ for all $1\leq i<\nu$ and $P(x)=c_{\nu}x^{\nu}+c_{\nu-1}x^{\nu-1}+\cdots+c_1x$ be a polynomial.

\begin{thm}\label{thm2}
    Let $\nu\geq 2$ be an integer and let $P(x)=c_{\nu}x^{\nu}+c_{\nu-1}x^{\nu-1}+\cdots+c_1x\in\mathbb{Z}[x]$ be a polynomial with non-zero discriminant. Then $\limsup_{Q\to\infty}\mathcal{S}_{\mathcal{F}_{Q,P}}(\Lambda)$ is finite. Moreover, the $\limsup_{Q\to\infty}\mathcal{S}_{\mathcal{F}_{Q,P}}(\Lambda)$ is bounded by 
    \[\limsup_{Q\to\infty}\mathcal{S}_{\mathcal{F}_{Q,P}}(\Lambda)\ll\int_{0}^{\Lambda}\frac{1}{\lambda^{2}}\sum_{1\leq m<\frac{2\lambda}{\beta_{P}}}h(m)\log\left(\frac{2\lambda}{m\beta_{P}} \right) d\lambda \numberthis\label{Thm2}\]
for any $\Lambda > 0$, where $\beta_{P}=\prod_p \left(1-\frac{f_{P}(p)}{p^2}\right)$, 
 $h(m)=\sum_{\substack{n,\delta, d_1,d_2\geq 1\\n\delta d_1d_2=m}}nf_P(d_1)f_P(d_2)$ %is the sum of positive divisors of $m$, %
and \[f_{P}(p):=\#\{1\leq d\leq p : P(d)\equiv 0\pmod p \}.\numberthis\label{f_P}\]    
\end{thm}
For the specific polynomial $P(x)=x(x+1)$, we prove that the limiting pair correlation measure of the sequence $(\mathcal{F}_{Q,P})_Q$ exists and is non-Poissonian. Moreover, we derive an explicit formula for the pair correlation function in this case.
\begin{thm}\label{P(x)=x(x+1)}
    Let $P(x)=x(x+1)$ be a fixed polynomial.  Then the limiting pair correlation function of the sequence $(\mathcal{F}_{Q,P})_Q$ exists and is given by
    \[\mathfrak{g}_2(\lambda)=\frac{1}{\zeta(2)\lambda^2}\sum_{d_2,d_4=1}^{\infty}\frac{\mu(d_2)\mu(d_4)}{d_2d_4\phi(d_2)\phi(d_4)}\sum_{1\leq m<\frac{2\lambda}{\beta_P}}\mathfrak{h}(m)\log\left(\frac{2\lambda}{m\beta_P}\right), \]
    where
    \begin{align*}
        \mathfrak{h}(m)=&\sum_{\substack{n,\delta, d_1,d_3\geq 1\\n\delta d_1d_3=m\\\gcd\left(\frac{d_2}{dG_1},\frac{d_4}{DG_2}\right)=1}}n\mu(d_1)\mu(d_3)dDG_1G_2
         \prod_{p|\frac{d_2d_4}{dDG_1G_2}}\left(1+\frac{1}{p}\right)^{-1},
    \end{align*}
   and $d=\gcd(d_1,d_2),\ D=\gcd(d_3,d_4),\ G_1=\gcd\left(\delta,\frac{d_2}{d}\right)$,\ $G_2=\gcd\left(\delta,\frac{d_4}{D}\right)$. 
\end{thm}
%&\times\prod_{p|\gcd\left(\frac{d_2}{\gcd(d_1,d_2)\gcd\left(\delta,\frac{d_2}{\gcd(d_1,d_2)}\right)}\,\frac{d_2}{\gcd(d_1,d_2)\gcd\left(\delta,\frac{d2}{\gcd(d_1,d_2)}\right)}\right)}\left(1-\frac{1}{p}\right).

In Figure 1, we compare the plots for the pair correlation function $\mathfrak{g}_2(\lambda)$, the below bound of the pair correlation function for the polynomial $P(x)=x(x^2+1)$, 
\[g_{P}(\lambda):=\frac{1}{22\lambda^{2}}\sum_{1\leq m<\frac{2\lambda}{\beta_{P}}}\left(\sum_{\substack{n,\delta, d_1,d_2\geq 1\\n\delta d_1d_2=m}}n(d_1d_2)^{0.01}\right)\log\left(\frac{2\lambda}{m\beta_{P}} \right), \numberthis\label{boundpair}\] 
and the pair correlation function $g(\lambda)$ of the classical Farey sequence : 
against various distributions: Poissonian distribution $(g_{P_o}= 1)$, and GUE distribution $(g_{\text{GUE}})$.
In \eqref{boundpair}, we have used Huxley's \cite{HuxleyM} bound for $f_P(n)$. Note that for $P(x)=x(x^2+1),  f_{P}(2)=2$; for other values, $f_{P}(p)=3$ if $p\equiv 1\pmod{4}$, and $f_{P}(p)=1$ if $p\equiv 3\pmod{4}$. %We plot the graph of the bound of the pair correlation function for the sequence $(\mathcal{F}_{Q,P})_Q$ with $P=P1$ and $P=P2$. We denote
\begin{figure}[ht]
\centering
\subfloat{
\includegraphics[width=12cm, height=6.5cm]{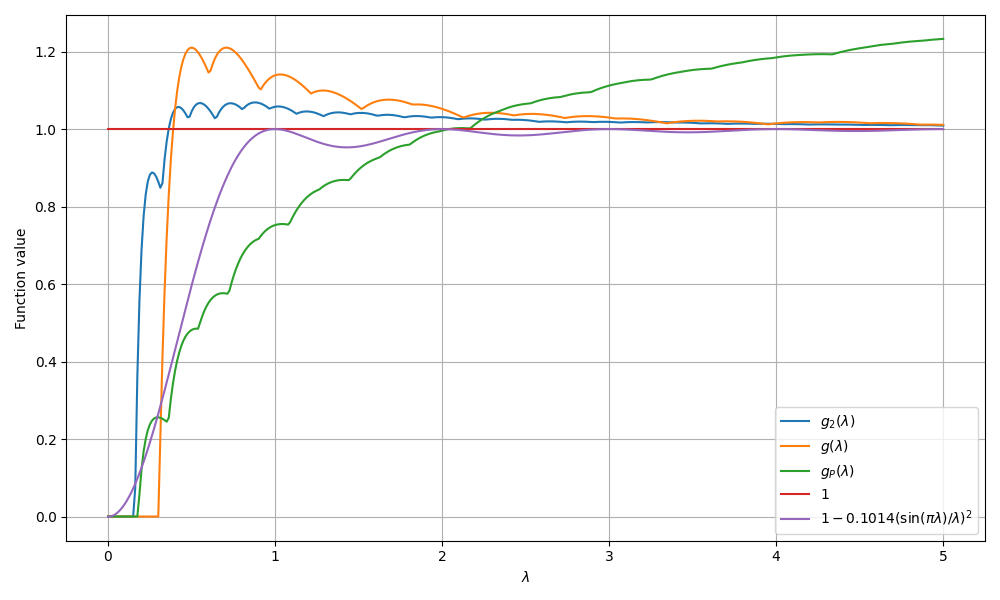}} 
\caption{The graphs of pair correlation functions $\mathfrak{g}_{2}(\lambda),\ {g}_{P}(\lambda),\ g(\lambda),\ g_{Po}(\lambda)\equiv 1$ and $g_{GUE}(\lambda)=1-\left(\frac{\sin \pi\lambda}{\pi\lambda} \right)^2$. }
\end{figure} 
%We next plot the graph of $\mathfrak{g}_2(\lambda)$, and for comparison, we show the graph of Poisson distribution and GUE distribution.

%\begin{figure}[ht]
%\centering
%\subfloat{
%\includegraphics[width=12cm, height=6.5cm]{graph2-3.png}} 
%\caption{The graphs of pair correlation functions $\mathfrak{g}_{2}(\lambda),\  g_{Po}(\lambda)\equiv 1$ and $g_{GUE}(\lambda)=1-\left(\frac{\sin \pi\lambda}{\pi\lambda} \right)^2$. }
%\end{figure} 

Another question of significant interest is determining the values $\tau>0$ for which there are infinitely many primes that satisfy the Diophantine inequality
\[||\alpha p||<p^{-\tau+\epsilon} \numberthis\label{Diophantine}\]
for all $\epsilon>0$, where $||t||$ denotes the distance from the nearest integer to a real number $t$. Vinogradov \cite{Vinogradov} was the first to determine that $\tau=1/5$ is admissible and subsequently, his result was improved by various authors \cite{Harman, MR1367078, HB, Jia, MR1247382, MR1790174, MR1974139, VaughanR}. The inequality in \eqref{Diophantine} is equivalent to the existence of a fraction with prime denominator $a/p$ satisfying $|\alpha-\frac{a}{p}|<p^{-1-\tau+\epsilon}$. All the above results on $\tau$ are equivalent to quantitative statements on the gap between the Farey fractions with prime denominators. Motivated by this connection, we study the pair correlation statistics of polynomial Farey sequence with prime denominators. %For the usual Farey sequence with prime denominators, an explicit formula for Xiong and Zaharescu \cite{Xiong} studied the pair correlation of Farey fractions with prime denominators and showed that the pair correlation is Poissonian. We want to study the pair correlation of polynomial Farey fractions with prime denominators, and it is interesting to see the Poissonian behavior even after restricting the polynomial Farey denominators with a congruence condition. 
For each integer $Q$, let $\mathcal{B}_Q$ be a fixed subset of prime numbers that are less than or equal to $Q$. 
The polynomial Farey sequence with prime denominators is given by
\[\mathscr{M}_{\mathcal{B}_Q,P}:=\left\{\frac{a}{p}: 1\leq a\leq p\leq Q,\ \gcd(P(a),p)=1,\ p\in\mathcal{B}_Q \right\}. \numberthis\label{Farey prime}\]
A similar set as $\mathcal{B}_Q$ was considered by Xiao \cite{Xiao}, where he studied the pair correlation of the usual Farey fractions with denominators in that set.
%\textcolor{red}{similar set and such be little precise}
\begin{thm}\label{thm3}
    The limiting pair correlation of the sequence $(\mathscr{M}_{\mathcal{B}_Q,P})_{Q\ge 1}$ exists as $Q\to\infty$ and is Poissonian if and only if $\sum_{p\in\mathcal{B}_Q}p^2=o\left((\#\mathscr{M}_{\mathcal{B}_Q,P})^2\right)$.
\end{thm}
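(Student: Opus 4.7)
I would establish both directions of the equivalence by asymptotic analysis of the pair correlation count, splitting according to whether the two prime denominators coincide. Set $\mathcal{N}_Q := |\mathscr{M}_{\mathcal{B}_Q, P}|$ and fix $I = [-\Lambda, \Lambda]$. Write the unnormalized pair count as $T_1 + T_2$, with $T_1$ collecting pairs of common prime denominator (and distinct numerators) and $T_2$ collecting pairs with distinct denominators. For the same-denominator piece, the condition $(a - a')/p \in I/\mathcal{N}_Q + \mathbb{Z}$ together with $|a - a'| < p$ confines $a - a'$ to $[-p\Lambda/\mathcal{N}_Q,\, p\Lambda/\mathcal{N}_Q]$. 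A trivial count of admissible $a$ per each valid difference yields $T_1^{(p)} \leq 2\lfloor p\Lambda/\mathcal{N}_Q\rfloor \cdot p$, so
\[ \frac{T_1}{\mathcal{N}_Q} \;\leq\; \frac{|I|\sum_{p \in \mathcal{B}_Q} p^2}{\mathcal{N}_Q^2}. \]

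For $T_2$, fix distinct primes $p \neq p'$ in $\mathcal{B}_Q$. The Chinese remainder map $(a, a') \mapsto ap' - a'p \pmod{pp'}$ is a bijection $\mathbb{Z}/p \times \mathbb{Z}/p' \to \mathbb{Z}/pp'$, so the condition $a/p - a'/p' \in I/\mathcal{N}_Q + \mathbb{Z}$ becomes the requirement that $ap' - a'p \pmod{pp'}$ lie in a target interval of residues of cardinality $\sim pp'|I|/\mathcal{N}_Q$. Imposing the coprimality constraints via $\mathbb{1}[\gcd(P(a), p) = 1] = 1 - \mathbb{1}[p \mid P(a)]$ (and analogously in $a'$), inclusion--exclusion together with the bound $f_P \leq \nu$ on the zeros of $P$ modulo each prime yields
\[ T_2^{(p, p')} = \frac{|I|(p - f_P(p))(p' - f_P(p'))}{\mathcal{N}_Q} + O(\nu^2), \]
where the error arises from boundary effects in the target and from the $O(\nu^2)$ pairs where $P$ vanishes modulo both $p$ and $p'$. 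Summing over ordered pairs of distinct primes,
\[ T_2 = \frac{|I|}{\mathcal{N}_Q}\Bigl[\mathcal{N}_Q^2 - \sum_{p \in \mathcal{B}_Q}(p - f_P(p))^2\Bigr] + O(\nu^2 |\mathcal{B}_Q|^2). \]

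Combining, using $(p - f_P(p))^2 = p^2 + O(p)$ with $\sum_p p = \mathcal{N}_Q + O(|\mathcal{B}_Q|)$, and dividing through by $\mathcal{N}_Q$,
\[ \mathcal{S}_{\mathscr{M}_{\mathcal{B}_Q, P}}(I) = |I| - \frac{|I|\sum_{p \in \mathcal{B}_Q} p^2}{\mathcal{N}_Q^2} + \frac{T_1}{\mathcal{N}_Q} + O\Bigl(\frac{|\mathcal{B}_Q|^2}{\mathcal{N}_Q}\Bigr) + o(1). \]
Under the hypothesis $\sum p^2 = o(\mathcal{N}_Q^2)$, the two explicit correction terms vanish; the remainder $|\mathcal{B}_Q|^2/\mathcal{N}_Q$ also vanishes, since the hypothesis forces $|\mathcal{B}_Q| \to \infty$ (by Cauchy--Schwarz $\mathcal{N}_Q^2 \leq |\mathcal{B}_Q|\sum p^2$), while $\mathcal{N}_Q \gg |\mathcal{B}_Q|^2 \log|\mathcal{B}_Q|$ unconditionally via the prime number theorem applied to the sum of the $|\mathcal{B}_Q|$ smallest primes. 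Hence $\mathcal{S}(I) \to |I|$, proving Poissonian convergence. For the converse, expanding $T_1/\mathcal{N}_Q$ via $\lfloor x\rfloor = x - \{x\}$ transforms the asymptotic into $\mathcal{S}_{\mathscr{M}_{\mathcal{B}_Q, P}}(I) = |I| - (2/\mathcal{N}_Q)\sum_{p \in \mathcal{B}_Q} \{p\Lambda/\mathcal{N}_Q\}\, p + o(1)$; assuming the Poissonian limit for every $\Lambda$, integrating over $\Lambda \in (0, 1)$ and applying dominated convergence (justified by the uniform-in-$\Lambda$ bound $(2/\mathcal{N}_Q)\sum_p p = 2 + o(1)$) gives $\sum p^2/\mathcal{N}_Q^2 + o(1) = \int_0^1(2/\mathcal{N}_Q) \sum_p \{p\Lambda/\mathcal{N}_Q\}\, p \, d\Lambda \to 0$, forcing $\sum p^2 = o(\mathcal{N}_Q^2)$.

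The principal obstacle is establishing the $O(\nu^2)$ error in $T_2^{(p, p')}$ uniformly in the pair $(p, p')$: although the CRT bijection is automatic, a careful incidence count in the target residue interval is required so that the boundary mismatch and the set of ``both zeros of $P$'' residues contribute only $O(\nu^2)$, ensuring the cumulative error $O(\nu^2|\mathcal{B}_Q|^2)$ remains $o(\mathcal{N}_Q)$.
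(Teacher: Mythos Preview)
Your approach is correct and genuinely different from the paper's. The paper follows the Boca--Zaharescu template: it introduces a smooth test function $H$, expands $\sum_r c_r\bigl(\sum_{\gamma}e(r\gamma)\bigr)^2$ using the exact evaluation $\sum_{\gamma}e(r\gamma)=\sum_{p\mid r}p+O_P(|\mathcal{B}_Q|)$, and then recovers $H$ via Poisson summation, invoking a lemma of Xiong et al.\ for the off-diagonal main term; the converse is obtained by passing to the characteristic function and arguing that the surviving diagonal term $\Lambda\sum p^2/\mathcal{N}_Q^2$ obstructs Poissonian behaviour. Your same/different-denominator split, combined with the CRT bijection and inclusion--exclusion over the at most $\nu$ roots of $P$ modulo each prime, avoids Fourier analysis and Poisson summation entirely; the cost is the residue-interval bookkeeping you correctly identify. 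Your converse---integrating the fractional-part identity over $\Lambda\in(0,1)$ and invoking dominated convergence to force $\sum p^2/\mathcal{N}_Q^2\to 0$---is in fact more careful than the paper's, which asserts the contradiction without explicitly controlling the $O(1)$ boundary errors. Two small points to tighten: for the converse you need $T_1^{(p)} = 2\lfloor p\Lambda/\mathcal{N}_Q\rfloor\,p + O(\nu\lfloor p\Lambda/\mathcal{N}_Q\rfloor)$ as an asymptotic rather than just the upper bound you stated (each nonzero difference $d\pmod p$ has a fibre in $A_p^2$ of size $p-O(\nu)$, so this is immediate); and the remainder $|\mathcal{B}_Q|^2/\mathcal{N}_Q$ is $o(1)$ unconditionally once $\mathcal{N}_Q\to\infty$ (either $|\mathcal{B}_Q|$ is bounded, or your prime-sum lower bound applies), so it is already negligible in the converse direction without appealing to the hypothesis.
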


If $P(x)=x$ and for each $Q$, $\mathcal{B}_Q$ is the set of prime numbers that are less than or equal to $Q$, and $\mathscr{M}_{Q}$ is the set of Farey fractions with prime denominators not exceeding $Q$, then by the prime number theorem and a partial summation formula using Theorem \ref{thm3}, we immediately deduce the following result of Xiong et al. \cite{Xiong}.
\begin{cor}\cite[Theorem 1]{Xiong}
  The limiting pair correlation of the sequence $(\mathscr{M}_{Q})_{Q\in\mathbb{N}}$ exists as $Q\to\infty$ and is constant equal to $1$.  
\end{cor}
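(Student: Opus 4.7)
The plan is to obtain this corollary as an immediate specialization of Theorem \ref{thm3}. Setting $P(x)=x$, we have $f_P(p) = 1$ for every prime, so $\gcd(P(a),p) = \gcd(a,p) = 1$ reduces to the classical coprimality condition; with $\mathcal{B}_Q = \{p \text{ prime}: p \leq Q\}$, the set $\mathscr{M}_{\mathcal{B}_Q, P}$ coincides with $\mathscr{M}_Q$. Thus it suffices to verify the asymptotic condition $\sum_{p\in\mathcal{B}_Q} p^2 = o(|\mathscr{M}_Q|^2)$ from Theorem \ref{thm3}, and invoke the theorem to conclude that the pair correlation is Poissonian, i.e.\ $g(x) \equiv 1$.

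To verify the condition, I would first count the cardinality. For each prime $p \leq Q$ there are exactly $p - 1$ admissible numerators $a$ with $1 \leq a \leq p$ and $\gcd(a,p)=1$, so
\[
|\mathscr{M}_Q| = \sum_{p \leq Q} (p-1).
\]
An application of the prime number theorem in the form $\pi(x) \sim x/\log x$ together with Abel/partial summation gives
\[
\sum_{p\leq Q} p \sim \frac{Q^2}{2 \log Q}, \qquad \sum_{p \leq Q} p^2 \sim \frac{Q^3}{3\log Q},
\]
so that $|\mathscr{M}_Q| \sim Q^2/(2\log Q)$ and hence $|\mathscr{M}_Q|^2 \sim Q^4/(4(\log Q)^2)$.

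Combining these asymptotics,
\[
\frac{\sum_{p \in \mathcal{B}_Q} p^2}{|\mathscr{M}_Q|^2} \sim \frac{Q^3/(3 \log Q)}{Q^4/(4(\log Q)^2)} = \frac{4 \log Q}{3 Q} \longrightarrow 0
\]
as $Q \to \infty$. Therefore the hypothesis of Theorem \ref{thm3} is satisfied, and the limiting pair correlation measure of $(\mathscr{M}_Q)_{Q\in\mathbb{N}}$ is Poissonian, which by the definition \eqref{g(x)} of the pair correlation function means $g(x) \equiv 1$. There is no serious obstacle here: the entire content is a direct verification of the criterion using the prime number theorem, and the only care needed is in assembling the correct asymptotic for $|\mathscr{M}_Q|$ and checking that $\log Q/Q \to 0$ dominates the ratio.
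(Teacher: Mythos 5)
Your proposal is correct and matches the paper's intended argument: the paper likewise obtains this corollary by specializing Theorem \ref{thm3} to $P(x)=x$ and $\mathcal{B}_Q$ the full set of primes up to $Q$, and then verifying the hypothesis $\sum_{p\leq Q}p^2=o(|\mathscr{M}_Q|^2)$ via the prime number theorem and partial summation exactly as you do. Your asymptotics $\sum_{p\leq Q}p\sim Q^2/(2\log Q)$ and $\sum_{p\leq Q}p^2\sim Q^3/(3\log Q)$ are correct, and the resulting ratio tending to zero completes the verification.
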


Using Theorem \ref{thm3}, we next investigate the distribution of the fractions in $\mathscr{M}_{\mathcal{B}_Q,P}$ for different choices of the set $\mathcal{B}_Q$.
%\begin{defn}
    %A prime $p$ is said to be Piatetski-Shapiro prime if it is of the form $\lfloor n^c \rfloor$ for some positive integer $n$ and $1<c<2$.
    %\end{defn}
    
Let $a_1,\ldots, a_k$ be distinct positive even integers. A prime $p$ is said to be a prime $k$-tuple if $p+a_1,\ldots, p+a_k$ are all prime. Assuming the Hardy-Littlewood conjecture \cite{Hardy}, there are infinitely many prime $k$-tuples. Let $1<c<2$ be a fixed real number. A prime $p$ is said to be Piatetski-Shapiro prime if it is of the form $\lfloor n^c \rfloor$.
For more details on the Piatetski-Shapiro primes, one may refer to the well-known article of \cite{Piatetski}. A prime $p$ is said to be a Chen prime if $p+2$ is a product of atmost two primes (see \cite{Chen}).  
%If $\mathcal{B}_Q$ is the set of prime $k$-tuples, then assuming Hardy-Littlewood conjecture \cite{Hardy}, we can immediately deduce from Theorem \ref{thm3}.
\begin{cor}\label{cor2}
 Let $c\in (1, 1.16)$ be a fixed real number. If $\mathcal{B}_Q^{(1)}$ and $\mathcal{B}_Q^{(2)}$ are the sets of Piatetski-Shapiro primes not exceeding $Q$ and Chen primes less than or equal to $Q$, respectively then the limiting pair correlation of the sequences $(\mathscr{M}_{\mathcal{B}_Q^{(1)},P})_{Q\in\mathbb{N}}$ and $(\mathscr{M}_{\mathcal{B}_Q^{(2)},P})_{Q\in\mathbb{N}}$ exists as $Q\to\infty$ and it is Poissonian. Moreover, if $\mathcal{B}_Q^{(3)}$ is the set of prime $k$-tuples less than or equal to $Q$, then under the Hardy-Littlewood prime $k$-tuple conjecture, the limiting pair correlation of the sequence $(\mathscr{M}_{\mathcal{B}_Q^{(3)},P})_{Q\in\mathbb{N}}$ exists as $Q\to\infty$ and is Poissonian.
\end{cor}
Let $q$ be a positive integer. For any choice of $\mathcal{D}\subseteq\{0, 1,\ldots,q-1\}$, let 
\[\mathbi{A}=\left\{\sum_{0\leq i\leq k}n_iq^i\ :\ n_i\in\{0,1,\ldots,q-1 \}\setminus\mathcal{D},\ k\geq 0 \right\}\]
be a set of integers with no digit in base $q$ in the set $\mathcal{D}$. A prime $p\in\mathbi{A}$ is said to be a prime with restricted digits. One may refer to the article of Maynard \cite{Maynard} for more on primes with restricted digits.
\begin{cor}\label{cor3}
   Let $q$ be a sufficiently large positive integer. For any choice of $\mathcal{D}\subseteq\{0, 1,\ldots,q-1\}$ with $|\mathcal{D}|\leq q^{23/80}$, let $\mathcal{B}_{Q}^{(4)}\subset\mathbi{A}$ be the set of primes that are less than or equal to $Q$ with restricted digits. Then the limiting pair correlation of the sequence $(\mathscr{M}_{\mathcal{B}_Q^{(4)},P})_{Q\in\mathbb{N}}$ exists as $Q\to\infty$ and is Poissonian. 
\end{cor}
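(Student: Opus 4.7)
The plan is to reduce the corollary to Theorem \ref{thm3}, so the task is to verify the single condition
\[
\sum_{p\in\mathcal{B}_Q}p^2 \;=\; o\bigl(|\mathscr{M}_{\mathcal{B}_Q,P}|^2\bigr)
\]
for the set $\mathcal{B}_Q$ of primes with restricted digits. First I would bound the denominator from below: since $P$ has degree $\nu$, the congruence $P(a)\equiv 0\pmod{p}$ has at most $\nu$ solutions, so for $p>\nu$ we have $p-f_P(p)\geq p/2$, giving
\[
|\mathscr{M}_{\mathcal{B}_Q,P}| \;=\; \sum_{p\in\mathcal{B}_Q}(p-f_P(p)) \;\gg_{P}\; \sum_{p\in\mathcal{B}_Q} p.
\]
Next I would bound the numerator trivially by $\sum_{p\in\mathcal{B}_Q}p^2 \leq Q\sum_{p\in\mathcal{B}_Q}p$. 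Combining the two estimates reduces the problem to showing that $\sum_{p\in\mathcal{B}_Q}p$ grows strictly faster than $Q$.

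The key step is then to invoke Maynard's theorem \cite{Maynard}: under the hypothesis $|\mathcal{D}|\leq q^{23/80}$, the number $\pi_{\mathbi{A}}(Q)$ of primes in $\mathbi{A}$ up to $Q$ admits a lower bound
\[
\pi_{\mathbi{A}}(Q) \;\gg\; \frac{Q^{\delta}}{\log Q}, \qquad \delta \;=\; \frac{\log(q-|\mathcal{D}|)}{\log q} \;>\;0,
\]
for $Q$ large (and in particular for $Q$ of the form $q^{k}$; a general $Q$ is handled by choosing $k=\lfloor \log Q/\log q\rfloor$). Partial summation then yields
\[
\sum_{p\in\mathcal{B}_Q}p \;\gg\; \frac{Q^{1+\delta}}{\log Q},
\]
and therefore
\[
\frac{\sum_{p\in\mathcal{B}_Q}p^2}{|\mathscr{M}_{\mathcal{B}_Q,P}|^2} \;\ll_{P}\; \frac{Q}{\sum_{p\in\mathcal{B}_Q}p} \;\ll\; \frac{\log Q}{Q^{\delta}} \;\longrightarrow\; 0
\]
as $Q\to\infty$. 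Theorem \ref{thm3} then delivers the existence and Poissonian nature of the pair correlation.

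The argument is essentially a one-line application of Theorem \ref{thm3} once the arithmetic input is secured; the only substantial obstacle is the deployment of Maynard's deep result, which is precisely the reason the hypothesis $|\mathcal{D}|\leq q^{23/80}$ appears in the statement. If one instead had a weaker count like $\pi_{\mathbi{A}}(Q) = Q^{o(1)}$, the ratio above would fail to tend to zero, so the polynomial growth $Q^{\delta}$ in Maynard's bound is exactly what is needed.
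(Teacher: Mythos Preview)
Your proposal is correct and follows essentially the same route as the paper: both reduce to the criterion of Theorem~\ref{thm3} by estimating $\sum_{p\in\mathcal{B}_Q}p^2$ and $|\mathscr{M}_{\mathcal{B}_Q,P}|$ via Maynard's count $\pi_{\mathbi{A}}(Q)\asymp Q^{\delta}/\log Q$ together with partial summation. The only cosmetic difference is that the paper writes out asymptotic formulas for both quantities whereas you use the cruder inequalities $\sum p^2\le Q\sum p$ and $|\mathscr{M}_{\mathcal{B}_Q,P}|\gg\sum p$; note, though, that your partial summation step for $\sum p\gg Q^{1+\delta}/\log Q$ implicitly needs the (trivial) upper bound $\pi_{\mathbi{A}}(t)\ll t^{\delta}$ as well as Maynard's lower bound.
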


%\begin{cor}
  %Let $\mathcal{B}_Q$ be the set of prime $k$-tuples. Assuming Hardy-Littlewood conjecture, the limiting pair correlation of the sequence $(\mathscr{M}_{\mathcal{B}_Q,P})_{Q\in\mathbb{N}}$ exists as $Q\to\infty$ and is constant equal to $1$. 
%\end{cor}

In the next section, we study the problem of the distribution of polynomial Farey sequence along arithmetic progressions and bias over one from another. 
\subsection{Bias for Farey fractions}
As a consequence of Dirichlet's prime number theorem, the prime numbers in arithmetic progressions $a+nq$, with $a$ relatively prime to $q$, are evenly distributed. In particular, primes are asymptotically same among the residue classes modulo ${q}$ that are coprime to $q$.
 It was asserted by Chebyshev that there are more primes $\equiv 3\pmod 4$ than $\equiv 1\pmod 4$. Littlewood \cite{Littlewood} disproved Chebyshev's assertion by proving that the set of values of $x$ for which the difference $\pi(x;4,3)-\pi(x;4,1)$ is positive is unbounded, also there exists an unbounded set of values of $x$ for which the difference $\pi(x;4,3)-\pi(x;4,1)$ is negative. In this next part, we study Chebyshev's bias question for classical and polynomial Farey fractions. Let $q$ and $l$ be positive integers; we denote the number of polynomial Farey fractions with denominators in an arithmetic progression as
 \[S(Q;q,l):=\#\left\{\frac{a}{n}\in\mathcal{F}_{Q,P}\ |\ n\equiv l\pmod{q} \right\}.\numberthis\label{S_(Q)}\]
 We ask the following questions for Farey fractions analogous to prime number races.
 \begin{itemize}
     \item Does there exist positive integers $Q_0$, $l_1$, and $l_2$ with $l_1\not\equiv l_2\pmod{q}$ such that
 \[S(Q;q,l_1)>S(Q;q,l_2)\  \text{for all}\ Q>Q_0 ? \]
\item Are there arbitrarily large values of $Q$ for which $S(Q;q,l_1)<S(Q;q,l_2)$, and arbitrarily large values of $Q$ for which $S(Q;q,l_1)>S(Q;q,l_2)$? In other words, does the function $S(Q;q,l_1)-S(Q;q,l_2)$ change sign infinitely often?
 \end{itemize}
 %In particular, we are interested in the number of sign changes of difference between the number of polynomial Farey fractions with denominators $\equiv l_1\pmod q$ and the number of polynomial Farey fractions with denominators $\equiv l_2\pmod q$.
In here, we address the questions listed above. To state our results, we need the following conditions.

\noindent
\textbf{Haselgrove's condition for modulus $q$} \cite[p. 309]{Knapowski}: For all Dirichlet characters $\chi$ (mod $q$), $L(s,\chi)\ne 0$ for all $s\in(0,1)$.

Let $\nu,J\geq 1$ be integers and let $P(x)=c_{\nu}x^{\nu}+c_{\nu-1}x^{\nu-1}+\cdots+c_1x\in\mathbb{Z}[x]$ be a polynomial with non-zero discriminant and factorization
\[P(x)=\prod_{i=1}^Jm_i(x)^{e_i}, \numberthis\label{poly}\]
 where $m_i(x)\in\mathbb{Z}[x]$ are irreducible polynomials.
Let $K_i=\mathbb{Q}[x]/(m_i(x))$ be number fields with ring of integers $\mathcal{O}_{K_i}$. Let $\mathfrak{q}_i=\mathfrak{p}_1^{e_1}\cdots\mathfrak{p}_r^{e_r}\subset\mathcal{O}_{K_i}$ be an ideal with the unique prime factorization such that $p\in\mathfrak{p}_j$ for some $j$ and $p|q$. For $1\leq i\leq J$, let $\mathcal{L}_i(s,\chi^{\prime})$ be the Hecke $L$-functions associated with Hecke characters $\chi^{\prime}$ (mod ${\mathfrak{q}_i}$) of $\mathcal{O}_{K_i}$. We next state the analogous Haselgrove's condition for Hecke $L$-functions:
%The analogous Haselgrove's condition for Hecke $L$-functions $\mathcal{L}_i(s,\chi^{\prime})$ associated with Hecke characters $\chi^{\prime}$ (mod ${\mathfrak{q}}$) of $\mathcal{O}_{K_i}$ is:

\noindent
\textbf{Haselgrove's condition for Hecke $L$-function mod $\mathfrak{q}$:} For all Hecke characters $\chi^{\prime}$ (mod $\mathfrak{q}$),\ $\mathcal{L}(s,\chi^{\prime})\ne 0$ for all $s\in(0,1)$.

Our main result for races of classical and polynomial Farey fractions is as follows.
\begin{thm}\label{thm4}
    Let $q\geq 2, l_1, l_2$ be positive integers such that $l_1\not\equiv l_2\pmod{q}$ and $\gcd(q,l_1l_2)=1$. Let $P(x)\in\mathbb{Z}[x]$ be as in \eqref{poly}. Assuming Haselgrove's condition for Hecke $L$-function $\mathcal{L}_i(s,\chi^{\prime})$ modulo $\mathfrak{q}_i$, the set of values of $Q$ for which the difference $S(Q;q,l_1)-S(Q;q,l_2)$ is strictly positive and the set of values of $Q$ for which the difference $S(Q;q,l_1)-S(Q;q,l_2)$ is strictly negative are unbounded. 
\end{thm}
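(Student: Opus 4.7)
\textbf{Proof plan for Theorem \ref{thm4}.}
The plan is to adapt the Landau--Knapowski oscillation framework used for classical Chebyshev bias to the polynomial Farey setting. First I would use orthogonality of Dirichlet characters modulo $q$ to decompose
\[S(Q;q,l_1) - S(Q;q,l_2) = \frac{1}{\phi(q)} \sum_{\chi \neq \chi_0} (\bar\chi(l_1) - \bar\chi(l_2)) M_\chi(Q),\]
where $M_\chi(Q) = \sum_{n \le Q} J_P(n) \chi(n)$ and $J_P(n) = \#\{1 \le a \le n : \gcd(P(a), n) = 1\}$. The principal character term vanishes because $\gcd(q, l_1 l_2) = 1$ forces $\bar\chi_0(l_1) = \bar\chi_0(l_2) = 1$, so the main term of size $Q^2$ cancels cleanly and only fluctuating contributions survive.

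Next I would compute the generating Dirichlet series. The M\"obius identity $J_P(n) = n \sum_{d|n} \mu(d) f_P(d)/d$ together with a short Euler product calculation yields
\[L_P(s, \chi) := \sum_n \frac{J_P(n) \chi(n)}{n^s} = L(s-1, \chi) \prod_p \left(1 - \frac{\chi(p) f_P(p)}{p^s}\right).\]
The crucial step is to express $H_P(s, \chi) := \prod_p (1 - \chi(p) f_P(p)/p^s)$ in terms of Hecke $L$-functions. Using the factorization \eqref{poly}, the identity $f_P(p) = \sum_{i=1}^J f_{m_i}(p)$ (valid for primes not dividing $\mathrm{disc}(P)$), and the classical identification of $f_{m_i}(p)$ with the number of degree-one primes above $p$ in $\mathcal{O}_{K_i}$, a logarithmic comparison convergent in $\mathrm{Re}(s) > 1/2$ should give
\[H_P(s, \chi) = U(s, \chi) \prod_{i=1}^J \mathcal{L}_i(s, \chi)^{-1},\]
where $\mathcal{L}_i(s, \chi)$ is the Hecke $L$-function on $\mathcal{O}_{K_i}$ induced by $\chi$ via the norm map and $U(s, \chi)$ is holomorphic and non-vanishing in $\mathrm{Re}(s) > 1/2$.

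With this factorization in hand, I would apply Perron's formula to each $M_\chi(Q)$; for non-principal $\chi$ the only singularities of $L_P(s, \chi)$ in the critical strip arise from zeros of the Hecke $L$-functions $\mathcal{L}_i(s, \chi)$. A Landau-type oscillation argument then closes the proof. Setting $A(x) := S(\lfloor x \rfloor;q,l_1) - S(\lfloor x \rfloor;q,l_2)$ and
\[D(s) := \frac{1}{\phi(q)} \sum_{\chi \neq \chi_0} (\bar\chi(l_1) - \bar\chi(l_2)) L_P(s, \chi),\]
the identity $\int_1^\infty A(x) x^{-s-1}\,dx = D(s)/s$ holds in the half-plane of absolute convergence. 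If $A(x)$ had constant sign for all sufficiently large $x$, Landau's theorem would force the abscissa of convergence of this integral to be a real singularity of $D(s)/s$; but Haselgrove's condition forces $\mathcal{L}_i(s, \chi) \neq 0$ on $(0,1)$ for every Hecke character in play, so $D(s)$ admits no real singularity in $(0,1)$, yielding a contradiction. Running the same argument with $-A(x)$ in place of $A(x)$ delivers the sign change in the opposite direction.

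The hardest step will be the clean Hecke-$L$-function identification in the second paragraph: one must carefully track the ramified primes dividing $\mathrm{disc}(P) \cdot q$, specify precisely the Hecke character induced on each $\mathcal{O}_{K_i}$ via the norm map, and verify that the correction factor $U(s,\chi)$ is genuinely holomorphic and non-vanishing in a strip wide enough to permit the Landau step. A secondary difficulty is controlling tails in the Perron/Mellin step so that Landau's theorem applies cleanly despite the $Q^2$-order growth of the counting functions $S(Q;q,l_j)$.
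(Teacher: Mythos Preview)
Your plan matches the paper's proof closely: character orthogonality, the factorization $L_P(s,\chi)=L(s-1,\chi)\prod_p(1-\chi(p)f_P(p)p^{-s})$, identification of that Euler product with $\prod_i\mathcal{L}_i(s,\chi')^{-1}$ up to a correction $U(s,\chi)$ holomorphic and nonzero in $\Re(s)>1/2$, and then Landau's oscillation theorem. The one substantive difference is that the paper does not apply Landau directly to $A(Q)=S(Q;q,l_1)-S(Q;q,l_2)$ but to $\mathscr{A}(Q)=A(Q)\pm cQ^{1/2-\epsilon}$, whose Mellin transform acquires an extra term $\pm c/(s-\tfrac12+\epsilon)$. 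This device is exactly what addresses the difficulty you correctly flagged as ``hardest'': since $U(s,\chi)$ is an Euler product of shape $\prod_p(1+O(p^{-2s}))$, it is only controlled in $\Re(s)>1/2$, so without the auxiliary term Landau would at best yield analyticity of $g(s)$ in $\Re(s)>1/2$, and the contradiction would then require a Hecke zero strictly to the right of the critical line --- something one cannot assert in general (and which is false under GRH). With the auxiliary $\pm cQ^{1/2-\epsilon}$ the Landau half-plane is pushed to $\Re(s)>1/2-\epsilon$, which certainly contains nontrivial zeros of the $\mathcal{L}_i$, furnishing the poles that contradict analyticity.

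One further point on the logic of your final step: the sentence ``so $D(s)$ admits no real singularity in $(0,1)$, yielding a contradiction'' elides the actual mechanism. The absence of a real singularity is not itself a contradiction; rather, Haselgrove's condition permits analytic continuation along the real segment, Landau then upgrades this to analyticity in a half-plane, and it is the presence of \emph{complex} Hecke zeros inside that half-plane which contradicts analyticity. For this last step you must also observe, as the paper does, that these Hecke zeros (clustering near $\Re(s)=1/2$) are not cancelled by zeros of the numerator $L(s-1,\chi)$, whose nontrivial zeros cluster near $\Re(s)=3/2$. Finally, the Perron step you mention is not actually needed; the Mellin identity $\int_1^\infty A(x)x^{-s-1}\,dx = D(s)/s$ already suffices.
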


\begin{rem}
    Note that Theorem \ref{thm4} does not give any information on the frequency of sign changes of the function $S(Q;q,l_1)-S(Q;q,l_2)$. We use a result of Kaczorowski and Wiertelak \cite[Lemma 3.1]{Kaczorowski} to gain information about the frequency of sign changes. We denote 
    \[\mathscr{A}(Q):=S(Q;q,l_1)-S(Q;q,l_2)\pm cQ^{\frac{1}{2}-\epsilon}, \]
  where $c$ is some positive constant and $\epsilon>0$ is arbitrarily small real number. In the definition of $\mathscr{A}(Q)$, we take the exponent of $Q$ to be $1/2-\epsilon$ in order to include the line $\Re(s)=1/2$ in our region enabling us to apply \cite[Lemma 3.1]{Kaczorowski}. Assuming Haselgrove's condition for Hecke $L$-function $\mathcal{L}_i(s,\chi^{\prime})$ modulo $\mathfrak{q}_i$, we apply \cite[Lemma 3.1]{Kaczorowski} to the function $\mathscr{A}(Q)$, and obtain a sequence $\{Q_i\}_{i=1}^{[\log T]}$ in the interval $(1,T]$ of length $\log T$ such that $sgn \mathscr{A}(Q_i)\ne sgn \mathscr{A}(Q_{i+1})$ and $|\mathscr{A}(Q_i)|>Q_i^{1/2-\epsilon}$. Hence, $\mathscr{A}(Q)$ has at least $\gg\log T$ oscillations of size $Q^{1/2-\epsilon}$, in the interval $(1,T]$.
\end{rem}

%Moreover, we prove an omega result for $S(Q;q,l)$, which counts the polynomial Farey fractions of order $Q$ whose denominators are in an arithmetic progression. polynomial Farey fractions, improving on the error term in the asymptotic formula $S(Q;q,l)$ in Proposition \ref{S(Q,q)}.
Moreover, we prove an $\Omega$-result for the error term in the asymptotic formula of $S(Q;q,l)$ in Proposition \ref{S(Q,q)}. 
\begin{thm}\label{thm5}
   Let $q\geq 2$ and $l$ be positive integers with $\gcd(q,l)=1$. Let $P(x)\in\mathbb{Z}[x]$ be as in \eqref{poly}. If $\Theta$ denotes the supremum of real parts of the zeros of the Hecke $L$-functions $\mathcal{L}_i(s,\chi)$ modulo $\mathfrak{q}_i$, then for any $\epsilon>0$, assuming the Haselgrove's condition, we have
   \[S(Q;q,l)-\frac{Q^2}{2\phi(q)}\prod_{p|q}\left(1-\frac{1}{p} \right)\prod_{p\nmid q}\left(1-\frac{f_P(p)}{p^2} \right)=\Omega_{\pm}(Q^{\Theta-\epsilon}), \]
   where $f_P(p)$ is as in \eqref{f_P}.
\end{thm}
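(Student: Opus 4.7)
The plan is to apply Landau's oscillation theorem to the error
\[E(Q):=S(Q;q,l)-\frac{Q^2}{2\phi(q)}\prod_{p\mid q}\Bigl(1-\frac{1}{p}\Bigr)\prod_{p\nmid q}\Bigl(1-\frac{f_P(p)}{p^2}\Bigr).\]
First I set up the generating Dirichlet series. Writing
\[g_P(n):=\#\{1\leq a\leq n:\gcd(P(a),n)=1\},\]
the Chinese Remainder Theorem shows $g_P$ is multiplicative with $g_P(p^k)=p^{k-1}(p-f_P(p))$ for $k\geq 1$. A direct Euler-product calculation then yields
\[\sum_{n\geq 1}\frac{g_P(n)\chi(n)}{n^s}=L(s-1,\chi)\cdot G_P(s,\chi),\qquad G_P(s,\chi):=\prod_p\Bigl(1-\frac{f_P(p)\chi(p)}{p^s}\Bigr),\]
for every Dirichlet character $\chi$ modulo $q$, and orthogonality of characters gives $S(Q;q,l)=\frac{1}{\phi(q)}\sum_{\chi}\overline{\chi}(l)\sum_{n\leq Q,\,\gcd(n,q)=1}g_P(n)\chi(n)$.

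Next I locate the singularities of these Dirichlet series in terms of Hecke $L$-functions. Using the factorization $P(x)=\prod_{i=1}^{J}m_i(x)^{e_i}$ and Dedekind's theorem, for every prime $p$ unramified in each $K_i$ one has $f_P(p)=\sum_{i=1}^{J}\#\{\mathfrak{p}\subset\mathcal{O}_{K_i}:\mathfrak{p}\mid p,\ f(\mathfrak{p}/p)=1\}$. Taking logarithms of Euler products and isolating the $k=1$ terms in each,
\[-\log G_P(s,\chi)=\sum_{i=1}^{J}\log\mathcal{L}_i(s,\widetilde{\chi}_i)+H(s,\chi),\]
where $\widetilde{\chi}_i$ is the Hecke character on $\mathcal{O}_{K_i}$ obtained by pulling back $\chi$ via the norm, and $H(s,\chi)$ is holomorphic for $\operatorname{Re}(s)>1/2$ because only prime ideals of residue degree $\geq 2$ and higher prime powers feed into it. Consequently $G_P(s,\chi)=e^{-H(s,\chi)}\prod_{i=1}^{J}\mathcal{L}_i(s,\widetilde{\chi}_i)^{-1}$, so each nontrivial zero of any $\mathcal{L}_i(s,\widetilde{\chi}_i)$ produces a pole of the full Dirichlet series inside $0<\operatorname{Re}(s)<1$. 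The only pole to the right of this strip is the simple pole at $s=2$ coming from $L(s-1,\chi_0)$ in the $\chi=\chi_0$ summand, whose residue (after Perron inversion) yields exactly the asserted main term $M(Q)$.

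The $\Omega_{\pm}$ assertion now follows from Landau's oscillation theorem. Suppose for contradiction that $E(Q)\geq -cQ^{\Theta-\epsilon}$ for some constant $c>0$ and all sufficiently large $Q$, and set $A(Q):=E(Q)+cQ^{\Theta-\epsilon}\geq 0$. Its Mellin-Stieltjes transform equals, up to a rational function $\frac{c\cdot\text{const}}{s-(\Theta-\epsilon)}$, the sum $\frac{1}{\phi(q)s}\sum_{\chi}\overline{\chi}(l)L(s-1,\chi)G_P(s,\chi)$ with the residue at $s=2$ subtracted. By the previous paragraph, all of its singularities with $\operatorname{Re}(s)>\Theta-\epsilon$ lie either at $s=\Theta-\epsilon$ (on the real axis but strictly left of $\Theta$) or at zeros of some $\mathcal{L}_i(s,\widetilde{\chi}_i)$, whose real parts have supremum $\Theta$. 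Since $A(Q)\geq 0$ eventually, Landau's theorem forces the abscissa of convergence of its Mellin transform to coincide with a real singularity; but Haselgrove's condition precludes any real zero of the $\mathcal{L}_i(s,\widetilde{\chi}_i)$ in $(0,1)$, yielding a contradiction. The symmetric argument applied to $-E(Q)+cQ^{\Theta-\epsilon}$ handles the $\Omega_+$ side, and as $c>0$ is arbitrary we conclude $E(Q)=\Omega_{\pm}(Q^{\Theta-\epsilon})$.

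The principal technical obstacle is the precise comparison $G_P(s,\chi)=e^{-H(s,\chi)}\prod_i\mathcal{L}_i(s,\widetilde{\chi}_i)^{-1}$: one must check that the finitely many primes ramified in the $K_i$, the passage from Dirichlet characters modulo $q$ to Hecke characters modulo $\mathfrak{q}_i$, and the holomorphic factor $e^{-H(s,\chi)}$ jointly do not cancel a Hecke-$L$ zero of real part arbitrarily close to $\Theta$, so that the abscissa of convergence in the Landau step is genuinely $\Theta$ rather than some strictly smaller value.
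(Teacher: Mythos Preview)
Your approach is essentially the same as the paper's: both compute the Mellin transform of the error term via the identity $\sum_n g_P(n)\chi(n)n^{-s}=L(s-1,\chi)\prod_p(1-\chi(p)f_P(p)p^{-s})$, relate the Euler product to $\prod_i\mathcal{L}_i(s,\chi')^{-1}$ through Dedekind's theorem, invoke Haselgrove's condition to exclude real singularities in $(0,1)$, and then apply Landau's oscillation theorem to $\pm E(Q)+Q^{\Theta-\epsilon}$ to force a contradiction with the existence of complex Hecke zeros of real part $>\Theta-\epsilon$. The non-cancellation concern you raise at the end is indeed the delicate point (the paper handles it only loosely, by noting that nontrivial zeros of $L(s-1,\chi)$ lie in $1<\Re s<2$ and so cannot kill Hecke poles in $0<\Re s<1$); note however that $e^{-H(s,\chi)}$ never vanishes, so it is not a source of cancellation.
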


\subsection{Organization}
The article is organized as follows. In Section 2, we provide some preliminary results required to prove our main results. We establish an asymptotic formula for $\#S(Q;q,l)$ and $\#\mathcal{F}_{Q,P}$ in Section 3. We discuss the discrepancy of polynomial Farey fractions in Section 4. Sections 5 and 6 are devoted to the proof of results concerning the correlation of polynomial Farey fractions. We establish the results on races of Farey fractions in Section 7. In an appendix in Section 8, we prove the results for the exponential sum twisted by the M\"{o}bius function and bound for twists of Merten's function by the arithmetic function $f_P(n)$ \eqref{f_P}.

\subsection{Notations}
 We write $f(x)=\BigO{g(x)}$ or equivalently $f(x)\ll g(x)$ if there exists a constant $C$ such that $|f(x)|\leq Cg(x)$ for all $x$. Further, a subscript of the form $\BigOqP{}$ or $\ll_q$ means that the implied constant $C$ may depend on $q$. And $f(x)=\Omega_{\pm}(g(x))$ means that $\limsup{f(x)}/{g(x)}>0$ and $\liminf{{f(x)}/{g(x)}}<0$. We say $f(x)=o(g(x))$ when $\lim_{x\to\infty}f(x)/g(x)=0$. We write $f(x)\sim g(x)$ when $\lim f(x)/g(x)\to 1$ as $x\to\infty$. In here, $f(x)\asymp g(x)$ means that there exist constants $C_1$ and $C_2$ such that $C_1g(x)\leq f(x)\leq C_2g(x)$. The symbol $L(s,\chi)$ denotes the Dirichlet L-function, and $\mathcal{L}(s,\chi^{\prime})$ stands for the Hecke L-function. Here, $\chi$ and $\chi^{\prime}$ are Dirichlet character modulo $q$ and Hecke character modulo $\mathfrak{q}$, respectively. Also, $e(x)=e^{2\pi ix}$, $s=\sigma+it\in\mathbb{C}$ is a complex number, and $\lfloor x\rfloor$ denotes the floor function. Throughout the paper, $\phi$ and $\mu$ denote the Euler's totient function and the M\"{o}bius function, respectively. In here, $\epsilon>0$ is an arbitrarily small positive real number and $\pi(x;q,a)$ denotes the number of primes $p\leq x$ such that $p\equiv a\pmod{q}$. For any set $A$, $\#A$ denotes the cardinality of the set $A$. And $\tau(n)$ denotes the number of positive divisors of $n$.

%We use $\phi$ to denote the Euler totient function, and $\mu$ to the M\"{o}bius function. $\zeta(s)$ is the Riemann zeta function, 

\subsection{Acknowledgements}
The first named author acknowledges the support from the University Grants Commission, Department of Higher Education, Government of India, under NTA Ref. no. 191620135578. The second named author acknowledges the support from Anusandhan National Research Foundation (ANRF) sanction no. CRG/2023/001743. The authors are also grateful to Greg Martin and Stephan Baier for generous discussions which inspired some of the contents of this work. The authors are grateful to the referees for providing several valuable suggestions and crucial corrections to the manuscript, which significantly improved the presentation.

\section{Preliminaries}
In this section, we review some results which are essential in proving our main results.
We recall a result proved in \cite{Chaubey} on an estimate of the average of the counting function for integer solutions $1\leq d\leq m$ of the polynomial congruence $P(d)\equiv 0\pmod m$.

\begin{prop}\label{sum_f}
    For a fixed non-zero vector $\mathbi{c}=(c_n,c_{n-1},\ldots,c_1)\in \mathbb{Z}^n$, let $P(x)=c_nx^n+c_{n-1}x^{n-1}+\cdots+c_1x$ be a polynomial with non-zero discriminant. If 
    $f_{P}(m)=\#\{1\leq d\leq m : P(d)\equiv 0\pmod m \}$, then as $x\to 
    \infty$, we have
    \[\sum_{m\leq x}f_{P}(m)\sim Cx(\log x)^{J-1},\]
    where $J\geq 2$ is the number of distinct irreducible factors of the polynomial $P(x)\in\mathbb{Z}[x].$
\end{prop}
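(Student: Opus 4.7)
The plan is to study the Dirichlet series $F(s):=\sum_{m\geq 1}f_P(m)m^{-s}$ and deduce the asymptotic by a Tauberian theorem of Selberg-Delange type. The first step is to observe that $f_P$ is multiplicative: for coprime $m_1,m_2$, the Chinese Remainder Theorem puts residues $d\pmod{m_1m_2}$ with $P(d)\equiv 0$ in bijection with pairs of solutions modulo $m_1$ and $m_2$. Consequently $F$ admits an Euler product $\prod_p F_p(s)$.

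Next I would identify the local factors. Let $D$ be the discriminant of $P$. For each prime $p\nmid D$, Hensel's lemma lifts every simple mod-$p$ root uniquely to a root modulo $p^k$, so $f_P(p^k)=f_P(p)$ for all $k\geq 1$, which gives
\[F_p(s)=1+\frac{f_P(p)\,p^{-s}}{1-p^{-s}}.\]
The finitely many primes $p\mid D$ contribute a factor that is analytic in $\Re s>0$. Writing $P=\prod_{i=1}^J m_i^{e_i}$ and setting $K_i=\mathbb{Q}[x]/(m_i(x))$, for $p\nmid D$ the quantity $f_P(p)=\sum_{i=1}^J r_i(p)$ counts the total number of degree-one prime ideals of $\mathcal{O}_{K_i}$ above $p$ across the $J$ number fields. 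A careful matching of Euler factors then shows
\[F(s)=H(s)\prod_{i=1}^J\zeta_{K_i}(s)=\zeta(s)^J\,\widetilde{H}(s),\]
where $H$ absorbs primes of residue degree $\geq 2$, ramified primes and primes dividing $D$, and $\widetilde{H}$ is holomorphic and non-vanishing near $s=1$. Since each Dedekind zeta function has a simple pole at $s=1$, this identifies $F$ as having a pole of exact order $J$ at $s=1$.

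The final step is to apply the Selberg-Delange theorem (as in Tenenbaum, Chapter II.5) to obtain
\[\sum_{m\leq x}f_P(m)\sim\frac{\widetilde{H}(1)}{\Gamma(J)}\,x(\log x)^{J-1},\]
which is the desired asymptotic with $C=\widetilde{H}(1)/\Gamma(J)$.

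The hard part will be verifying the analytic hypotheses required by Selberg-Delange: namely, that $\widetilde{H}(s)=F(s)/\zeta(s)^J$ extends holomorphically and is bounded on a classical Selberg-Delange region $\Re s\geq 1-c/\log(|\tau|+2)$. This reduces to standard analytic properties of the Dedekind zeta functions $\zeta_{K_i}$ together with a local factor comparison establishing $F_p(s)(1-p^{-s})^J=1+O(p^{-2\Re s})$ for every $p\nmid D$, so that the residual Euler product converges absolutely in $\Re s>1/2$.
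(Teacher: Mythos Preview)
The paper does not actually prove this proposition; it simply cites \cite[Lemma~2.1]{Chaubey}. Your outline is therefore more detailed than what appears here, and the strategy you describe---multiplicativity of $f_P$, the factorisation $F(s)=H(s)\prod_{i}\zeta_{K_i}(s)$ via Dedekind's theorem, and a Selberg--Delange/Tauberian conclusion---is exactly the standard route and is essentially what the cited reference does as well.

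One point to correct in your final paragraph: the local comparison you wrote, $F_p(s)(1-p^{-s})^J=1+O(p^{-2\Re s})$, is false in general. For $p\nmid D$ one has
\[
F_p(s)(1-p^{-s})^J=1+(f_P(p)-J)\,p^{-s}+O(p^{-2s}),
\]
and $f_P(p)$ is not constant equal to $J$. The comparison that \emph{does} give $1+O(p^{-2\Re s})$ is the one you already set up earlier, namely $F_p(s)\big/\prod_i\zeta_{K_i,p}(s)$; the passage from $\prod_i\zeta_{K_i}(s)$ to $\zeta(s)^J$ is then handled not by an Euler-product estimate but by the analytic properties (zero-free region, polynomial growth) of the ratios $\zeta_{K_i}(s)/\zeta(s)$, which is precisely what your phrase ``standard analytic properties of the Dedekind zeta functions'' should cover. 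With that correction the argument goes through.
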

\begin{proof}
    See \cite[Lemma 2.1]{Chaubey}.
\end{proof}

The Poisson summation formula is crucial in proving our result.
  \begin{prop}\cite[p. 538]{Vaughan}\label{Poisson} (Poisson's summation formula).
    Let $f\in L^1(\mathbb{R})$ and $\widehat{f}$ be the Fourier transform of $f$, then we have
    \[\sum_{n=-\infty}^{\infty}f(n)=\sum_{m=-\infty}^{\infty}\widehat{f}(m). \]
  \end{prop}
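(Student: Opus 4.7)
The plan is to prove the Poisson summation formula by the standard periodization argument, reducing the identity for a function on the line to the Fourier series expansion of an associated periodic function. Concretely, I would form the periodization
\[
F(x) := \sum_{n \in \mathbb{Z}} f(x+n),
\]
which is formally $1$-periodic on $\mathbb{R}$, and then show (i) that this series converges in a suitable sense so that $F$ is a well-defined function in $L^1([0,1])$, and (ii) that its Fourier coefficients are exactly $\widehat{f}(m)$. Evaluating the resulting Fourier series at $x=0$ will give the claimed identity.

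First I would verify that under the integrability hypothesis the periodization is well-defined, using Fubini/Tonelli to write $\int_0^1 \sum_n |f(x+n)|\,dx = \|f\|_{L^1(\mathbb{R})} < \infty$, so that $\sum_n f(x+n)$ converges absolutely for almost every $x$ and defines an element of $L^1([0,1])$. Next I would compute the $m$-th Fourier coefficient by interchanging sum and integral (justified by the same absolute convergence):
\[
c_m \;=\; \int_0^1 F(x) e^{-2\pi i m x}\,dx \;=\; \sum_{n \in \mathbb{Z}} \int_0^1 f(x+n) e^{-2\pi i m x}\,dx \;=\; \int_{\mathbb{R}} f(y) e^{-2\pi i m y}\,dy \;=\; \widehat{f}(m),
\]
unfolding the sum of integrals over unit intervals into a single integral over $\mathbb{R}$. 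Then I would appeal to a suitable form of Fourier inversion to write $F(x) = \sum_m \widehat{f}(m) e^{2\pi i m x}$ (pointwise, at $x=0$), and specializing to $x = 0$ would recover $\sum_n f(n) = \sum_m \widehat{f}(m)$.

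The main obstacle is the last step: pure $L^1$ membership of $f$ is not enough to guarantee pointwise Fourier inversion of $F$ at $x=0$, so one must impose and use mild additional regularity (for instance, that $f$ is continuous of bounded variation, or that both $f$ and $\widehat{f}$ have moderate decay like $|f(x)|,|\widehat{f}(x)| \ll (1+|x|)^{-1-\delta}$, as is standard when the formula is invoked in analytic number theory applications). Under any such condition the Fourier series of $F$ converges absolutely and uniformly, which lets me evaluate at $x=0$ and conclude. Since the statement in the excerpt is cited verbatim from Vaughan, I would in practice simply invoke the cited form, but the outline above is the honest path one would follow if proving it from scratch.
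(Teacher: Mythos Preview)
Your outline is the standard periodization proof and is essentially correct, including your honest caveat that bare $L^1$ is not quite enough for the pointwise evaluation at $x=0$ and that one needs mild extra regularity (which is always satisfied in the applications in this paper, where $H$ is smooth and compactly supported). There is nothing to compare against here: the paper does not prove this proposition at all but simply quotes it from \cite[p.~538]{Vaughan} and uses it as a black box, so your sketch already goes well beyond what the paper offers.
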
  

Next, we state a result for counting the lattice points in a bounded domain with some coprimality restriction.
  \begin{prop}\cite[Lemma 2]{Boca}\label{lattice sum}
     Let $R>1$ be a real number. Let $\Omega \subset [1,R]^2$ be a bounded region, and $f$ is a continuously differentiable function on $\Omega$. Then
      \begin{align*}
      \sum_{\substack{(a,b)\in\Omega\cap \mathbb{Z}^2\\ \gcd(a,b)=1}}f(a,b)=&\frac{6}{\pi^2} \iint_{\Omega} f(x,y)dxdy \\ &+\BigO{\left(\left\|{\frac{\partial f}{\partial x}}\right\|_{\infty}+\left\|{\frac{\partial f}{\partial y}}\right\|_{\infty}\right)\text{Area}(\Omega)\log R+\|f\|_{\infty}(R+\text{length}(\partial \Omega)\log R)}. 
      \end{align*}
  \end{prop}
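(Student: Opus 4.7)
My strategy is to remove the coprimality constraint by Möbius inversion and then compare each resulting lattice sum with the corresponding double integral via a unit-square Riemann-sum estimate. Writing $\mathbf{1}_{\gcd(a,b)=1}=\sum_{d\mid\gcd(a,b)}\mu(d)$, one gets
\[
\sum_{\substack{(a,b)\in\Omega\cap\Z^2\\\gcd(a,b)=1}} f(a,b)
=\sum_{d=1}^{R}\mu(d)\sum_{(u,v)\in d^{-1}\Omega\cap\Z^2} f(du,dv),
\]
the truncation at $d=R$ being forced by $\Omega\subset[1,R]^2$.

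Next, for each $d$ I would estimate the inner sum by $\iint_{d^{-1}\Omega} f(dx,dy)\,dxdy$. Partitioning the plane into axis-aligned unit squares, on each square fully contained in $d^{-1}\Omega$ the mean value theorem together with the chain rule (which brings a factor of $d$ from the substitution $x\mapsto dx$) gives a pointwise error $O\bigl(d(\|\partial f/\partial x\|_\infty+\|\partial f/\partial y\|_\infty)\bigr)$. Summing over the at most $\mathrm{Area}(\Omega)/d^2$ interior squares contributes $O\bigl(d^{-1}(\|\partial f/\partial x\|_\infty+\|\partial f/\partial y\|_\infty)\mathrm{Area}(\Omega)\bigr)$, while the $O\bigl(d^{-1}\mathrm{length}(\partial\Omega)\bigr)$ squares meeting $\partial(d^{-1}\Omega)$ each contribute $O(\|f\|_\infty)$. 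Substituting $x'=dx$, $y'=dy$ in the integral converts this into
\[
\sum_{(u,v)\in d^{-1}\Omega\cap\Z^2} f(du,dv)
=\frac{1}{d^2}\iint_\Omega f\,dxdy
+O\!\left(\frac{(\|\partial f/\partial x\|_\infty+\|\partial f/\partial y\|_\infty)\mathrm{Area}(\Omega)+\|f\|_\infty\mathrm{length}(\partial\Omega)}{d}\right).
\]

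Finally, multiplying by $\mu(d)$ and summing over $1\le d\le R$, I would separate the main and error contributions. Using the identity $\sum_{d=1}^{\infty}\mu(d)/d^2=6/\pi^2$ and the elementary tail bound $\sum_{d>R}\mu(d)/d^2\ll R^{-1}$, together with $\mathrm{Area}(\Omega)\le R^2$, the main term produces $\tfrac{6}{\pi^2}\iint_\Omega f\,dxdy+O(\|f\|_\infty R)$. Summing the per-$d$ error in the previous display against $\sum_{d\le R}1/d\ll\log R$ gives exactly the remaining error
$O\bigl((\|\partial f/\partial x\|_\infty+\|\partial f/\partial y\|_\infty)\mathrm{Area}(\Omega)\log R+\|f\|_\infty\mathrm{length}(\partial\Omega)\log R\bigr)$, matching the stated bound.

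The most delicate point will be the Riemann-sum step when $d$ is close to $R$: the rescaled region $d^{-1}\Omega$ may then be so thin that no unit square lies fully inside it, so the interior/boundary dichotomy has to be set up carefully to avoid an artificial case distinction. I would handle this by formulating the approximation lemma uniformly for arbitrary bounded regions, observing that in the degenerate regime the boundary term $O(\|f\|_\infty\mathrm{length}(\partial\Omega)/d)$ already absorbs the full contribution, together with a standard geometric lemma bounding the number of unit squares intersecting $\partial(d^{-1}\Omega)$ by $O(1+\mathrm{length}(\partial\Omega)/d)$.
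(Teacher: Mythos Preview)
The paper does not prove this proposition at all: it is quoted verbatim from \cite[Lemma 2]{Boca} and used as a black box, so there is no ``paper's own proof'' to compare against. Your proposal is the standard M\"obius-inversion argument for such visible-lattice-point sums and is essentially how the result is established in the cited reference; the outline (M\"obius to strip the coprimality, Riemann-sum comparison on each dilate $d^{-1}\Omega$ with an interior/boundary square count, then sum $\mu(d)/d^2$ to $6/\pi^2$ and the $1/d$ errors to $\log R$) is correct and the bookkeeping matches the stated error term.
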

Moreover, we prove an asymptotic result to count the number of polynomial Farey fractions of order $Q$ with denominators in $\mathcal{B}_Q$.
\begin{prop}\label{M_BQ}
 Let $\nu\geq 1$ be an integer and $P(x)=c_{\nu}x^{\nu}+c_{\nu-1}x^{\nu-1}+\cdots+c_1x\in\mathbb{Z}[x]$ be a polynomial with non-zero discriminant. If $\mathscr{M}_{\mathcal{B}_Q,P}$ is as in \eqref{Farey prime}, then
    \[\mathscr{N}_{\mathcal{B}_Q,P}=\#\mathscr{M}_{\mathcal{B}_Q,P}=\sum_{\substack{p\in \mathcal{B}_Q}}p+\BigO{\#\mathcal{B}_Q}. \]
\end{prop}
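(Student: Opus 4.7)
The plan is direct and short, since for a prime denominator the coprimality condition simplifies dramatically. First I would fix a prime $p \in \mathcal{B}_Q$ and observe that for $1 \leq a \leq p$, the condition $\gcd(P(a),p)=1$ is equivalent to $P(a) \not\equiv 0 \pmod{p}$, because $p$ is prime. By the very definition of $f_P(p)$ in \eqref{f_P}, the number of $a \in \{1,\ldots,p\}$ with $P(a) \equiv 0 \pmod p$ is exactly $f_P(p)$, so the number of admissible numerators is $p - f_P(p)$.

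Summing over all primes $p \in \mathcal{B}_Q$ then yields
\[
\#\mathscr{M}_{\mathcal{B}_Q,P} \;=\; \sum_{p \in \mathcal{B}_Q}\bigl(p - f_P(p)\bigr) \;=\; \sum_{p \in \mathcal{B}_Q} p \;-\; \sum_{p \in \mathcal{B}_Q} f_P(p).
\]
The only remaining task is to show that the second sum is absorbed into the error term $\BigOP{|\mathcal{B}_Q|}$.

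For this I would invoke Lagrange's theorem for polynomials over the field $\mathbb{F}_p$: since $P$ has non-zero discriminant (in particular $P \not\equiv 0$), for all but finitely many primes $p$ the reduction of $P$ modulo $p$ is a non-zero polynomial of degree at most $\nu$, and so it has at most $\nu$ roots in $\mathbb{F}_p$. Thus $f_P(p) \leq \nu$ for all sufficiently large $p$, and for the finitely many exceptional primes the contribution is absorbed into the implied constant depending on $P$. Consequently,
\[
\sum_{p \in \mathcal{B}_Q} f_P(p) \;\leq\; \nu\,|\mathcal{B}_Q| + O_P(1) \;=\; \BigOP{|\mathcal{B}_Q|},
\]
which gives the stated asymptotic. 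There is no real obstacle here; the argument is an immediate counting identity combined with Lagrange's bound on the number of polynomial roots modulo a prime.
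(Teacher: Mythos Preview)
Your proof is correct and follows essentially the same approach as the paper: both reduce the count to $\sum_{p\in\mathcal{B}_Q}(p-f_P(p))$ and then bound $f_P(p)\leq\nu$ via Lagrange's theorem. The only cosmetic difference is that the paper writes out the M\"{o}bius identity $\sum_{d\mid P(a),\,d\mid p}\mu(d)$ before splitting into the $d=1$ and $d=p$ contributions, whereas you observe directly that $\gcd(P(a),p)=1\iff P(a)\not\equiv 0\pmod p$; your added caveat about finitely many exceptional primes is a minor refinement.
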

\begin{proof}
We begin by expressing the number of fractions in $\mathscr{M}_{\mathcal{B}_Q,P}$ as a summation and then use the M\"{o}bius sum for the coprimality condition:
   \begin{align*}
   \#\mathscr{M}_{\mathcal{B}_Q,P}&=\sum_{p\in \mathcal{B}_Q}\sum_{\substack{a\leq p\\ \gcd(P(a),p)=1}}1
       =\sum_{\substack{p\in \mathcal{B}_Q\\}}\sum_{a\leq p}\sum_{\substack{d|P(a)\\d|p}}\mu(d)\\
        &=\sum_{\substack{p\in \mathcal{B}_Q}}\sum_{a\leq p}1-\sum_{p\in \mathcal{B}_Q}\sum_{\substack{a\leq p\\P(a)\equiv 0\pmod{p}}}1=\sum_{\substack{p\in \mathcal{B}_Q}}p-\sum_{\substack{p\in \mathcal{B}_Q}}f_P(p)
        =\sum_{\substack{p\in \mathcal{B}_Q}}p+\BigO{\#\mathcal{B}_Q}.
    \end{align*}
    In the last step, we applied Lagrange's theorem \cite[Chapter 7]{MR2445243} which implies that $P(x)$ has at most deg$(P)$ roots modulo $p$, thus $f_P(p)\leq \text{deg}(P)$.
\end{proof}

We also recall the well known Abel's summation formula.
\begin{prop}\cite[Theorem 4.2]{Apostol}\label{Abel}
    For any arithmetical function $a(n)$ let 
    $A(x)=\sum_{n\leq x}a(n), $
    where $A(x)=0$ if $x<1$. Assume f has a continuous derivative on the interval $[y,x]$, where $0<y<x$. Then we have
    \[\sum_{y<n\leq x}a(n)f(n)=A(x)f(x)-A(y)f(y)-\int_{y}^xA(t)f^{\prime}(t)dt. \]
\end{prop}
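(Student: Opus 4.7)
This is the classical Abel summation formula (Apostol, Theorem 4.2), and my plan is to derive it by combining a discrete summation by parts with an integral representation for the telescoping differences of $f$. Since $A$ is a step function that is constant on each interval $[n, n+1)$, the resulting finite sum will collapse into the Riemann integral appearing on the right-hand side.

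Let $M = \lfloor y \rfloor$ and $N = \lfloor x \rfloor$. The first step is to use the identity $a(n) = A(n) - A(n-1)$ to rewrite
\[\sum_{y < n \leq x} a(n) f(n) = \sum_{n=M+1}^{N} \bigl(A(n) - A(n-1)\bigr) f(n),\]
and then perform an index shift on one of the two resulting sums (standard discrete summation by parts) to obtain
\[A(N) f(N) - A(M) f(M+1) - \sum_{n=M+1}^{N-1} A(n) \bigl(f(n+1) - f(n)\bigr).\]
The second step is to convert the remaining discrete sum into an integral. Since $f$ is continuously differentiable, the fundamental theorem of calculus gives $f(n+1) - f(n) = \int_n^{n+1} f'(t)\, dt$, and because $A(t) = A(n)$ for $t \in [n, n+1)$, the sum equals
\[\sum_{n=M+1}^{N-1} \int_n^{n+1} A(t) f'(t)\, dt = \int_{M+1}^{N} A(t) f'(t)\, dt.\]

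The final step is to reconcile the integer endpoints $N$ and $M+1$ with the real endpoints $x$ and $y$. Using $A(x) = A(N)$ and $A(y) = A(M)$ (both consequences of $A$ being constant on $[N, N+1)$ and on $[M, M+1) \ni y$), I can write
\[A(x) f(x) - A(N) f(N) = A(x) \int_N^x f'(t)\, dt = \int_N^x A(t) f'(t)\, dt,\]
and analogously $A(M) f(M+1) - A(y) f(y) = \int_y^{M+1} A(t) f'(t)\, dt$. Combining these identities with the previous display yields exactly $A(x) f(x) - A(y) f(y) - \int_y^x A(t) f'(t)\, dt$, as required. The only delicate point in the argument is tracking the floor functions and the half-open intervals carefully, but this is purely bookkeeping and presents no genuine obstacle.
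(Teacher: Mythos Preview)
The paper does not give its own proof of this proposition; it is simply recalled from Apostol with a citation and used later as a tool. Your argument is the standard derivation of Abel's summation formula and is correct, so there is nothing to compare.
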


We next state a classical result of Landau on the singularities of the Mellin transform of a non-negative function.
\begin{prop}\cite{LandauE}\label{Landau}
    Let $A(x)$ be a real-valued function in one variable, and $A(x)$ does not change its sign for $x>x_0$, where $x_0$ is a sufficiently large real number. Suppose also for some real number $\beta<\gamma$, that Mellin transform $g(s):=\int_1^{\infty}A(x)x^{-s-1}dx$ is analytic for $\mathcal{R}(s)>\gamma$, can be analytically continued to the real segment $(\beta,\gamma]$. Then $g(s)$ represents an analytic function in the half plane $\mathcal{R}(s)>\beta$.
\end{prop}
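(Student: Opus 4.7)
The plan is to prove Proposition \ref{Landau} via Landau's classical contradiction argument, exploiting the positivity of $A(x)$ together with a Taylor expansion around a point to the right of the real axis. First I would reduce to the case $A(x)\geq 0$ for all $x\geq 1$: since $A$ does not change sign for $x>x_0$, after possibly negating I may assume $A(x)\geq 0$ on $(x_0,\infty)$, and the truncated piece $\int_1^{x_0}A(x)x^{-s-1}\,dx$ is entire in $s$ and may be absorbed into the already-analytic part. Let $\sigma_c$ denote the abscissa of absolute convergence of $\int_1^{\infty}A(x)x^{-s-1}\,dx$; by hypothesis $\sigma_c\leq\gamma$, and the goal is to show $\sigma_c\leq\beta$.

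Suppose for contradiction that $\sigma_c\in(\beta,\gamma]$. By hypothesis $g$ extends analytically across $\sigma_c$, so $g$ is analytic on some open disk $B(\sigma_c,r_0)$ with $r_0>0$; combined with the integral representation on $\Re(s)>\sigma_c$, this gives analyticity on $B(\sigma_c,r_0)\cup\{\Re(s)>\sigma_c\}$. I would then choose the real point $s_1=\sigma_c+\eta$ with $0<\eta<r_0$. Differentiation under the integral sign (justified for $\Re(s)>\sigma_c$ since the logarithmic factor is absorbed by any arbitrarily small increase of the exponent) yields
\[g^{(n)}(s_1)=(-1)^n\int_1^{\infty}A(x)(\log x)^n x^{-s_1-1}\,dx,\]
so $(-1)^n g^{(n)}(s_1)\geq 0$ for every $n\geq 0$. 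Any singularity $s^{*}$ of $g$ satisfies $\Re(s^{*})\leq\sigma_c$ and $|s^{*}-\sigma_c|\geq r_0$; writing $s^{*}=\sigma_c-a+ib$ with $a\geq 0$ and $a^2+b^2\geq r_0^2$ gives $|s_1-s^{*}|^2=(\eta+a)^2+b^2\geq \eta^2+r_0^2$, so the Taylor series of $g$ at $s_1$ has radius of convergence strictly greater than $\eta$.

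Fixing $\delta>0$ with $\eta+\delta<\sqrt{\eta^2+r_0^2}$, I would evaluate the Taylor expansion at $s=\sigma_c-\delta$:
\[g(\sigma_c-\delta)=\sum_{n=0}^{\infty}\frac{(-1)^n g^{(n)}(s_1)}{n!}(\eta+\delta)^n.\]
Each summand is non-negative, so Tonelli's theorem permits the interchange of sum and integral:
\[g(\sigma_c-\delta)=\int_1^{\infty}A(x)x^{-s_1-1}\sum_{n=0}^{\infty}\frac{[(\eta+\delta)\log x]^n}{n!}\,dx=\int_1^{\infty}A(x)x^{-(\sigma_c-\delta)-1}\,dx<\infty.\]
This shows the Mellin integral converges at $s=\sigma_c-\delta<\sigma_c$, contradicting the definition of $\sigma_c$. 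Hence $\sigma_c\leq\beta$, and the integral defines $g$ as an analytic function on the half plane $\Re(s)>\beta$. The main obstacle I anticipate is verifying that the Taylor series about $s_1$ actually reaches past $\sigma_c$: this requires that no singularity of $g$ on the vertical line $\Re(s)=\sigma_c$ lies within distance $\eta$ of $s_1$, which is precisely what the genuinely two-dimensional neighborhood $B(\sigma_c,r_0)$ supplied by the analytic continuation hypothesis guarantees.
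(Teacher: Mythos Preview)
The paper does not give its own proof of this proposition; it is quoted as a classical result of Landau and used as a black box in Section~6. Your write-up is a correct rendition of Landau's original argument: the reduction to $A\geq 0$, the positivity of $(-1)^n g^{(n)}(s_1)$, the geometric lower bound $\sqrt{\eta^2+r_0^2}$ for the radius of convergence at $s_1=\sigma_c+\eta$, and the Tonelli interchange recovering convergence of the Mellin integral at $\sigma_c-\delta$ are exactly the standard steps. One minor point worth making explicit is that the function on the disk $B(\sigma_c,r_0)$ furnished by the continuation hypothesis and the function given by the integral on $\{\Re(s)>\sigma_c\}$ agree on their overlap (both continue the same $g$ from $\Re(s)>\gamma$ through a connected region), so that you really do have a single analytic function on $B(\sigma_c,r_0)\cup\{\Re(s)>\sigma_c\}$ whose Taylor series at $s_1$ you are expanding; you use this implicitly when bounding the radius. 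You might also want a word on local integrability of $A$ on $[1,x_0]$ so that the truncated piece is indeed entire, but this is a routine regularity assumption implicit in the statement.
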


\begin{prop}\cite[Lemma 15.1]{Vaughan}\label{omega}
    Suppose that $A(x)$ is a bounded Riemann integrable function in any finite interval $1\leq x\leq X$, and that $A(x)\geq 0$ for all $x>X_0$. Let $\sigma_c$ denote the infimum of those $\sigma$ for which $\int_{X_0}^{\infty}A(x)x^{-\sigma}dx<\infty$. Then the function
    \[F(s)=\int_{1}^{\infty}A(x)x^{-s}dx \]
    is analytic in the half plane $\sigma>\sigma_c$, but not at the point $s=\sigma_c$.
\end{prop}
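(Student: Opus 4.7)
The plan is to verify the two assertions separately. For the analyticity on $\{\sigma > \sigma_c\}$, I would proceed by standard means: for any $\sigma_1 > \sigma_c$, the integral $\int_{1}^{\infty}|A(x)|x^{-\sigma_1}\,dx$ converges, and on every compact subset of $\{\sigma > \sigma_1\}$ the integrand is uniformly dominated by an integrable function, so Morera's theorem (or differentiation under the integral sign) immediately yields holomorphy there. Since $\sigma_1$ may be taken arbitrarily close to $\sigma_c$, this establishes analyticity on the full half-plane $\{\sigma > \sigma_c\}$.

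The second assertion, the existence of a singularity at $s = \sigma_c$, is the heart of the result and I would attack it by a Landau-type contradiction argument. First I would split $A = A\mathbf{1}_{[1,X_0]} + A\mathbf{1}_{(X_0,\infty)}$, noting that the finite piece defines an entire function in $s$, so it suffices to handle the case $A(x) \geq 0$ throughout the range of integration. Suppose, for contradiction, that $F$ extends analytically to a neighborhood of $\sigma_c$. Pick a real $\sigma_0 > \sigma_c$; then the Taylor expansion of $F$ about $\sigma_0$ would converge in a disk of radius strictly greater than $\sigma_0 - \sigma_c$. Differentiating under the integral sign gives
\[
F^{(n)}(\sigma_0) \;=\; (-1)^n \int_{1}^{\infty} A(x)\,(\log x)^{n}\, x^{-\sigma_0}\,dx,
\]
and evaluating the Taylor series at a real point $s = \sigma_c - \eta$ with $\eta > 0$ small enough to remain inside the disk of convergence yields a double sum in which every term is non-negative (because $A(x) \geq 0$ and $(\log x)^n \geq 0$ for $x \geq 1$). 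Tonelli's theorem then permits interchanging summation and integration, and recognizing the inner sum as the exponential series produces
\[
F(\sigma_c - \eta) \;=\; \int_{1}^{\infty} A(x)\, x^{-(\sigma_c - \eta)}\,dx,
\]
which forces the integral on the right to converge absolutely, contradicting the definition of $\sigma_c$ as the infimum of $\sigma$ for which convergence holds.

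The main obstacle I anticipate is the interchange of summation and integration at the contradictory point $\sigma_c - \eta$: this is precisely where the non-negativity hypothesis $A(x) \geq 0$ for $x > X_0$ is indispensable, since without it the reordered series could exhibit cancellation that masks divergence. The preliminary step of peeling off the bounded portion on $[1, X_0]$ is therefore essential. The differentiation under the integral sign used to produce the Taylor coefficients is by contrast routine, via the domination $(\log x)^n x^{-\sigma_0} \ll_{n,\sigma_1} x^{-\sigma_1}$ for any $\sigma_1 \in (\sigma_c, \sigma_0)$, and the verification that the disk of analyticity about $\sigma_0$ actually has radius exceeding $\sigma_0 - \sigma_c$ follows from a compactness/covering argument on the segment $[\sigma_c, \sigma_0]$ applied to the assumed analytic extension.
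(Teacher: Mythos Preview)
Your argument is correct and is precisely the classical Landau proof of this result. Note, however, that the paper does not supply its own proof of this proposition: it is quoted verbatim as \cite[Lemma 15.1]{Vaughan} and used as a black box in the proof of Theorem \ref{thm5}, so there is nothing in the paper to compare against beyond the citation itself.
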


We also use a version of the Poisson summation formula as below.
\begin{prop}\label{2.8}
 Let $\alpha,\beta\in\mathbb{R}$, and let $f$ be a smooth function on $\mathbb{R}$ such that Supp$(f)\subset(0,\Lambda)$. Then we have
 \[\sum_{n\in\mathbb{Z}}f(n+\alpha)e(\beta n)=\sum_{n\in\mathbb{Z}}\widehat{f}(n-\beta)e((n-\beta)\alpha), \]
 where $\widehat{f}$ is the Fourier transform of $f$.
\end{prop}
\begin{proof}
    Note that $f\in L^1(\mathbb{R})$. We define $g(x)=f(x+\alpha)e(\beta x)$. Clearly, $g$ is a smooth function and $g\in L^1(\mathbb{R})$, since $\int_{\mathbb{R}}|g(x)|dx=\int_{\mathbb{R}}|f(x)|dx<\infty$. The Fourier transform of $g$ is given by
    \begin{align*}
        \widehat{g}(x)&=\int_{\mathbb{R}}g(y)e(-xy)dy=\int_{\mathbb{R}}f(y+\alpha)e(-(x-\beta)y)dy\\&=\int_{\mathbb{R}}f(z)e(-(x-\beta)(z-\alpha))dz=\widehat{f}(x-\beta)e((x-\beta)\alpha).
    \end{align*}
We apply Proposition \ref{Poisson} to $g(x)=f(x+\alpha)e(\beta x)$. Therefore
\[\sum_{n\in\mathbb{Z}}f(n+\alpha)e(\beta n)=\sum_{n\in\mathbb{Z}}\widehat{f}(n-\beta)e((n-\beta)\alpha). \]
This completes the proof of Proposition \ref{2.8}.
\end{proof}

The following result on the lattice point counting problem, involving weight and coprimality conditions, is vital for proving the result on pair correlation measure.
\begin{lem}\label{lem2}
    Let $\Omega\subset[1,R]^2$ be a bounded region, and $f$ be a continuously differentiable function on $\Omega$. For any positive integers $r_1$ and $r_2$, we have
\[\sum_{\substack{(a,b)\in\Omega\cap\mathbb{Z}^2\\\gcd(a,r_1)=1=\gcd(b,r_2)\\\gcd(a,b)=1}}f(a,b)=\frac{1}{\zeta(2)}\prod_{p|r_1r_2}\left(1-\frac{1}{p^2} \right)^{-1}\prod_{p|r_1}\left(1-\frac{1}{p}\right)\prod_{p|r_2}\left(1-\frac{1}{p}\right)\iint_{\Omega}f(x,y)dxdy+E,\numberthis\label{2.3}\] 
     where
       \[ E\ll \left(\tau(r_1)\left\|{\frac{\partial f}{\partial x}}\right\|_{\infty}+\tau(r_2)\left\|{\frac{\partial f}{\partial y}}\right\|_{\infty}\right)\text{Area}(\Omega)\log^2 R+\|f\|_{\infty}(\tau(r_1)+\tau(r_2))R\log^2 R.\]
\end{lem}
\begin{proof}
We begin by removing the coprimality conditions on the left-hand side of \eqref{2.3} using M\"{o}bius summation. Therefore, we obtain  
   \begin{align*}
L:=\sum_{\substack{(a,b)\in\Omega\cap\mathbb{Z}^2\\\gcd(a,r_1)=1=\gcd(b,r_2)\\\gcd(a,b)=1}}f(a,b)&=\sum_{\substack{(a,b)\in\Omega\cap\mathbb{Z}^2\\\gcd(a,r_1)=1=\gcd(b,r_2)}}f(a,b)\sum_{\substack{d|a\\d|b}}\mu(d)=\sum_{d\leq R}\mu(d)\sum_{\substack{(a,b)\in\Omega\cap\mathbb{Z}^2\\\gcd(a,r_1)=1=\gcd(b,r_2)\\d|a, d|b}}f(a,b)\\
&=\sum_{\substack{d\leq R\\\gcd(d,r_1r_2)=1}}\mu(d)\sum_{\substack{(a_1,b_1)\in\frac{1}{d}\Omega\cap\mathbb{Z}^2\\\gcd(a_1,r_1)=1=\gcd(b_1,r_2)}}f(da_1,db_1)\\
&=\sum_{\substack{d\leq R\\\gcd(d,r_1r_2)=1}}\mu(d)\sum_{\substack{(a_1,b_1)\in\frac{1}{d}\Omega\cap\mathbb{Z}^2}}f(da_1,db_1)\sum_{\substack{d_1|a_1\\d_1|r_1}}\mu(d_1)\sum_{\substack{d_2|b_1\\d_2|r_2}}\mu(d_2)\\
&=\sum_{\substack{d\leq R\\\gcd(d,r_1r_2)=1}}\mu(d)\sum_{d_1|r_1}\mu(d_1)\sum_{d_2|r_2}\mu(d_2)\sum_{(a^{\prime},b^{\prime})\in\Delta\cap\mathbb{Z}^2}\mathfrak{f}(a^{\prime},b^{\prime}),\numberthis\label{l1}
\end{align*}
where $\mathfrak{f}(a^{\prime},b^{\prime})=f(dd_1a^{\prime},dd_2b^{\prime})$ and $\Delta=\left\{(x,y) : x\in\frac{1}{dd_1}[1,R], y\in\frac{1}{dd_2}[1,R] \right\}$. We use \cite[Lemma 2]{Boca} to estimate the inner-most sum in the above identity and obtain
\begin{align*}
   \sum_{(a^{\prime},b^{\prime})\in\Delta\cap\mathbb{Z}^2}\mathfrak{f}(a^{\prime},b^{\prime})=&\iint_{\Delta}\mathfrak{f}(x,y)dxdy\\ &+ \BigO{\left(\left\|{\frac{\partial \mathfrak{f}}{\partial x}}\right\|_{\infty}+\left\|{\frac{\partial \mathfrak{f}}{\partial y}}\right\|_{\infty}\right)\text{Area}(\Delta) +\|\mathfrak{f}\|_{\infty}(1+\text{length}(\partial \Delta))}\\
   =&\frac{1}{d^2d_1d_2}\iint_{\Omega}f(x,y)dxdy \\ &+ \BigO{\left(\frac{1}{dd_2}\left\|{\frac{\partial {f}}{\partial x}}\right\|_{\infty}+\frac{1}{dd_1}\left\|{\frac{\partial {f}}{\partial y}}\right\|_{\infty}\right)\text{Area}(\Omega) +\|{f}\|_{\infty}\frac{R}{d}\left(\frac{1}{d_1}+\frac{1}{d_2}\right)}.
\end{align*}
Inserting the above estimate into \eqref{l1}, we obtain
\begin{align*}
L=& \sum_{\substack{d\leq R\\\gcd(d,r_1r_2)=1}}\frac{\mu(d)}{d^2}\sum_{d_1|r_1}\frac{\mu(d_1)}{d_1}\sum_{d_2|r_2}\frac{\mu(d_2)}{d_2}\iint_{\Omega}f(x,y)dxdy+\BigO{\|{f}\|_{\infty}(\tau(r_1)+\tau(r_2)R\log^2R}\\ &+ \BigO{\left(\tau(r_1)\left\|{\frac{\partial {f}}{\partial x}}\right\|_{\infty}+\tau(r_2))\left\|{\frac{\partial {f}}{\partial y}}\right\|_{\infty}\right)\text{Area}(\Omega)\log^2R}\\
=&\frac{1}{\zeta(2)}\prod_{p|r_1r_2}\left(1-\frac{1}{p^2} \right)^{-1}\prod_{p|r_1}\left(1-\frac{1}{p} \right)\prod_{p|r_2}\left(1-\frac{1}{p} \right)\iint_{\Omega}f(x,y)dxdy+\BigO{\|{f}\|_{\infty}(\tau(r_1)+\tau(r_2))R\log^2R}\\ &+ \BigO{\left(\tau(r_1)\left\|{\frac{\partial {f}}{\partial x}}\right\|_{\infty}+\tau(r_2)\left\|{\frac{\partial {f}}{\partial y}}\right\|_{\infty}\right)\text{Area}(\Omega)\log^2R}.
\end{align*}
This completes the proof of Lemma \ref{lem2}.
\end{proof}

\section{Counting polynomial Farey fractions}
In this section, we prove an asymptotic formula for the number of polynomial Farey fractions with denominators in an arithmetic progression.
\begin{prop}\label{S(Q,q)}
  Let $\nu\geq 1$ be an integer and $P(x)=c_{\nu}x^{\nu}+c_{\nu-1}x^{\nu-1}+\cdots+c_1x\in\mathbb{Z}[x]$ be a polynomial with non-zero discriminant. Let $q$ and $l$ be positive integers with $\gcd(q,l)=1$. If $S(Q;q,l)$ is as in \eqref{S_(Q)}, then
    \[S(Q;q,l)=\frac{Q^2}{2\phi(q)}\prod_{p|q}\left(1-\frac{1}{p} \right)\prod_{p\nmid q}\left(1-\frac{f_P(p)}{p^2} \right)+\BigOqP{Q^{\frac{3}{2}+\epsilon}}, \]
    where $f_P(p)$ is as in \eqref{f_P}.
\end{prop}
\begin{proof}
    We begin with the sum
    \[S(Q;q,l)=\sum_{\substack{n\leq Q\\n\equiv l\pmod{q}}}\sum_{\substack{a\leq n\\\gcd(P(a), n)=1}}1.\numberthis\label{I3}\]
For fixed positive integers $q, l$ with $\gcd(q,l)=1$, in view of the identity
\[\frac{1}{\phi(q)}\sum_{\chi\pmod{q}}\chi(n\bar{l})=\left\{\begin{array}{cc}
   1  & \mbox{if} \ n\equiv l\pmod{q},\\
  0   & \mbox{otherwise,} 
\end{array}\right. \]
where $\chi$ is the Dirichlet character modulo $q$ and $\bar{l}$ is such that $l\bar{l}\equiv 1\pmod{q}$, the sum in \eqref{I3} can be written as
\begin{align*}
S(Q;q,l)&=\frac{1}{\phi(q)}\sum_{\chi\pmod q}\bar{\chi}(l)\sum_{n\leq Q}\chi(n)\sum_{\substack{a\leq n\\\gcd(P(a),n)=1}}1 =\frac{1}{\phi(q)}\sum_{\chi\pmod q}\bar{\chi}(l)\sum_{n\leq Q}\chi(n)\sum_{a\leq n}\sum_{\substack{d|P(a)\\d|n}}\mu(d)\\
&=\frac{1}{\phi(q)}\sum_{\chi\pmod q}\bar{\chi}(l)\sum_{n\leq Q}\chi(n)\sum_{d|n}\mu(d)\sum_{\substack{a\leq n\\ P(a)\equiv 0\pmod d}}1\\
&=\frac{1}{\phi(q)}\sum_{\chi\pmod q}\bar{\chi}(l)\sum_{n\leq Q}n\chi(n)\sum_{d|n}\frac{\mu(d)f_{P}(d)}{d}=\frac{1}{\phi(q)}\sum_{\chi\pmod q}\bar{\chi}(l)\sum_{n\leq Q}n\chi(n)K(n),\numberthis\label{I7}
\end{align*}
where $K(n)=\sum_{d|n}\frac{\mu(d)f_{P}(d)}{d}$.
  Note that the arithmetic function $n\chi(n)K(n)$ is multiplicative.  The Dirichlet series of $n\chi(n)K(n)$ is given by
  \begin{align*}
    F(s)&=\sum_{n=1}^{\infty}\frac{\chi(n)K(n)}{n^{s-1}}=\prod_{p}\left\{1+\left(1-\frac{f_{P}(p)}{p}\right)\left(\frac{\chi(p)}{p^{s-1}}+\frac{\chi(p^2)}{p^{2s-2}}+\cdots \right) \right\}\\
    &=\prod_{p}\left\{1+\left(1-\frac{f_{P}(p)}{p}\right)\frac{\chi(p)}{p^{s-1}}\cdot\frac{1}{1-\frac{\chi(p)}{p^{s-1}}} \right\}=L(s-1,\chi)\prod_{p}\left(1-\frac{\chi(p)f_{P}(p)}{p^s}\right), \numberthis\label{C3}
\end{align*}
  which is absolutely convergent for $\Re(s)>2$. Moreover, the product term on the far right side is absolutely convergent for $\Re(s)>1$. Thus, the Dirichlet series $F(s)$  has an analytic continuation to the half plane $\Re(s)>1$ except for a simple pole at $s=2$ in the case of principal Dirichlet character $\chi_0$. We use Perron's formula \cite[Theorem 2, p. 132]{Tenenbaum} for the Dirichlet series $F(s)$ with some fixed $\alpha=2+1/\log Q$ to obtain
  \[\sum_{n\leq Q}n\chi(n)K(n)=\frac{1}{2\pi i}\int_{\alpha-iT}^{\alpha+iT}\frac{F(s)Q^s}{s}ds+R(T),\numberthis\label{I1} \]
  where $R(T)\ll\frac{Q^{\alpha}}{T}\sum_{n=1}^{\infty}\frac{K(n)}{n^{\alpha}|\log(Q/n)|}$. Using the fact that $K(n)\ll n^{\epsilon}$, one can obtain bound for $R(T)$ in a similar way as in Davenport \cite{Davenport} (see p. 106-107). Therefore,
  \[R(T)\ll\frac{Q^{2+\epsilon}\log Q}{T}. \]
  To estimate the integral in \eqref{I1}, we shift the path of integration into a rectangular contour with line segments connecting the points $\alpha-iT, \alpha+iT, 3/2+\epsilon+iT$, and $3/2+\epsilon-iT$. We first consider the principal character $\chi_0$ (mod $q$). Applying Cauchy's residue theorem, we have
  \[\frac{1}{2\pi i}\int_{\alpha-iT}^{\alpha+iT}\frac{F(s)Q^s}{s}ds=\frac{Q^2}{2}\prod_{p|q}\left(1-\frac{1}{p} \right)\prod_{p\nmid q}\left(1-\frac{f_P(p)}{p^2}\right)+\sum_{j=1}^3I_j, \numberthis\label{I2}\]
  where $I_1$ and $I_3$ are the integrals along the horizontal line segments connecting the points $3/2+\epsilon+iT, \alpha+iT$ and $3/2+\epsilon-iT, \alpha-iT$, respectively, and $I_2$ is integral along vertical line $[3/2+\epsilon-iT, 3/2+\epsilon+iT]$. The first term in the above identity is due to the simple pole of the integrand at $s=2$. To estimate the integrals $I_1$ and $I_3$, we use the standard bounds for $\zeta(s)$ (see \cite[p. 47]{Titchmarsh}). Thus,
  \[I_1, I_3\ll_{q}
\frac{\log T}{T^{3/4}}\int_{3/2+\epsilon}^{2}Q^{\sigma}d\sigma+\frac{\log T}{T}\int_{2}^{\alpha}Q^{\sigma}d\sigma\ll_{q}\frac{Q^2\log T}{T^{3/4}\log Q}+\frac{Q^{\alpha}\log T}{T\log Q}. \]
  We use \cite[Lemma 2.2]{Bittu} to estimate the integral $I_2$ and obtain
  \[I_2\ll_{q}Q^{3/2+\epsilon}\int_{0}^{T}\frac{|\zeta(\frac{1}{2}+\epsilon+it)|}{|\frac{1}{2}+\epsilon+it|}dt\ll_{q}Q^{3/2+\epsilon}\log T. \]
  We next consider the case for the non-principal character $\chi\ne\chi_0$. We continue with the same contour defined above and use the bounds for $L(s-1,\chi)$ (see \cite{Kolesnik}). Therefore
  \[I_1, I_3\ll_{q}\frac{\log T}{T^{181/216}}\int_{3/2+\epsilon}^{2}Q^{\sigma}d\sigma+\frac{\log T}{T}\int_{2}^{\alpha}Q^{\sigma}d\sigma\ll_{q}\frac{Q^2\log T}{T^{181/216}\log Q}+\frac{Q^{\alpha}\log T}{T\log Q}, \]
  and
  \[I_2\ll_{q}Q^{3/2+\epsilon}\int_{0}^{T}\frac{|L(\frac{1}{2}+\epsilon+it)|}{|\frac{1}{2}+\epsilon+it|}dt\ll_{q}Q^{3/2+\epsilon}\log T. \]
  Collecting all the above estimates in \eqref{I1} and choosing $T=Q$, for $\chi=\chi_0$, we have
  \[\sum_{n\leq Q}n\chi_0(n)K(n)=\frac{Q^2}{2}\prod_{p|q}\left(1-\frac{1}{p} \right)\prod_{p\nmid q}\left(1-\frac{f_P(p)}{p^2}\right)+\BigOqP{Q^{\frac{3}{2}+\epsilon} },\numberthis\label{I4} \]
  and for $\chi\ne \chi_0$, we have
  \[\sum_{n\leq Q}n\chi(n)K(n)=\BigOqP{Q^{\frac{3}{2}+\epsilon}}.\numberthis\label{I5} \]
Inserting \eqref{I4} and \eqref{I5} into \eqref{I7} gives the required result.  
\end{proof}
The error term in Proposition \ref{S(Q,q)} is sharpened in terms of $\Omega-$result in Theorem \ref{thm5}. The exponent $3/2+\epsilon$ in the error term is sharpened to the supremum of the real part of zeros of Hecke $L$-function with a saving of $\epsilon$.
An immediate consequence of Proposition \ref{S(Q,q)} is the following corollary which is obtained upon taking $q=1$.
\begin{cor}\label{cor3.2}
   Let $\nu\geq 1$ be an integer and $P(x)=c_{\nu}x^{\nu}+c_{\nu-1}x^{\nu-1}+\cdots+c_1x\in\mathbb{Z}[x]$ be a polynomial with non-zero discriminant. If $\mathcal{F}_{Q,P}$ is as in \eqref{F_Q}, then
    \[\mathcal{N}_{Q,P}=\#\mathcal{F}_{Q,P}=\frac{Q^2}{2}\prod_p \left(1-\frac{f_{P}(p)}{p^2}\right)+\BigO{Q^{\frac{3}{2}+\epsilon}},\numberthis\label{N_Q}\] 
\end{cor}

\section{Discrepancy of polynomial Farey fractions}

In this section, we study the global statistics of polynomial Farey fractions. More generally, we analyze the discrepancy of fractions in  $\mathcal{F}_{Q, P}$.
%which counts the solution to a polynomial congruence equation.

\subsection{Proof of Theorem \ref{Disc}}
\begin{proof}
  Let $\alpha\in[0,1]$ be a real number. To establish the upper bound for the discrepancy, we write
    \[A(\alpha;\mathcal{N}_{Q,P})=\sum_{\gamma\in\mathcal{F}_{Q,P}\cap[0,\alpha]}1.\]
We next consider the far-right side of \eqref{R_N} to obtain
\begin{align*}
        A(\alpha;\mathcal{N}_{Q,P})-\alpha \mathcal{N}_{Q,P}&=\sum_{q\leq Q}\left(\sum_{\substack{a\leq \alpha q\\ \gcd(P(a),q)=1}}1-\alpha\sum_{\substack{a\leq  q\\ \gcd(P(a),q)=1}}1\right)
        =\sum_{q\leq Q}\left(\sum_{a\leq\alpha q}\sum_{\substack{d|P(a)\\d|q}}\mu(d)-\alpha\sum_{a\leq q}\sum_{\substack{d|P(a)\\d|q}}\mu(d) \right)\\
        %&=\sum_{q\leq Q}\left(\sum_{d|q}\mu(d)\sum_{\substack{a\leq\alpha q\\d|P(a)}}1-\sum_{d|q}\mu(d)\sum_{\substack{a\leq q\\d|P(a)}}1 \right)
        &=\sum_{q\leq Q}\sum_{d|q}\mu(d)\left(\sum_{\substack{a\leq\alpha q\\d|P(a)}}1-\alpha\sum_{\substack{a\leq q\\d|P(a)}}1 \right).
        \end{align*}
     %\[A(\alpha;\mathcal{N}_{Q,P})-\alpha \mathcal{N}_{Q,P}=\sum_{q\leq Q}\sum_{\substack{a\leq \alpha q\\ \gcd(P(a),q)=1}}1-\alpha\sum_{q\leq Q}\sum_{\substack{a\leq  q\\ \gcd(P(a),q)=1}}1.\]
        %\textcolor{red}{This explanation can be omitted and the two above and below can be combined}We proceed by removing the condition $\gcd(P(a),q)=1$ using the M\"{o}bius function and obtain
    Since $f_{P}(d)$ counts the number of the solutions of the polynomial congruence $P(a)\equiv 0\pmod d$, so using this fact in the above estimate, we have
\[\sum_{\substack{a\leq\alpha q\\d|P(a)}}1=\left\lfloor \frac{\alpha q}{d} \right\rfloor f_{P}(d)+\BigO{f_P(d)}\ \text{and}\ \sum_{\substack{a\leq q\\d|P(a)}}1=\frac{q}{d}f_P(d). \]
Therefore, we have  
\begin{align*}
        |A(\alpha;\mathcal{N}_{Q,P})-\alpha \mathcal{N}_{Q,P}|&=
        \left|\sum_{q\leq Q}\sum_{d|q}\mu(d)\left(\left\lfloor \frac{\alpha q}{d} \right\rfloor f_{P}(d)+\BigO{f_P(d)}  - \frac{\alpha q}{d}  f_{P}(d) \right)\right|\\
        &=\left|\sum_{q\leq Q}\sum_{d|q}\mu(d)\left( \left(\frac{\alpha q}{d} +\BigO{1}\right) f_{P}(d) - \frac{\alpha q}{d}  f_{P}(d) \right)\right|
         %&=\sum_{q\leq Q}\sum_{d|q}\mu(d)f_{\mathbi{c}}(d)\left( \frac{\alpha q}{d}-\left\{\frac{\alpha q}{d}\right\}  -\alpha \left(\frac{ q}{d}-\left\{\frac{q}{d}\right\} \right)\right)\\
         %&=\left|\sum_{q\leq Q}\sum_{d|q}\mu(d)f_{P}(d)\left( -\left\{\frac{\alpha q}{d}\right\} +\BigO{f_P(d)}\right)\right|\\%&=\alpha\sum_{q\leq Q}\sum_{d|q}\mu(d)f_{P}(d)\left\{\frac{q}{d}\right\}-\sum_{q\leq Q}\sum_{d|q}\mu(d)f_{P}(d)\left\{\frac{\alpha q}{d}\right\}\\
         \ll\sum_{q\leq Q}\sum_{d|q}f_{P}(d)\\&\ll\sum_{q\leq Q}\sum_{d\leq\frac{Q}{q}}f_{P}(d).\numberthis\label{D_2}
    \end{align*}
    %Taking absolute values on both sides of above identity gives
    %\begin{align*}
        %|A(\alpha;\mathcal{N}_{Q,P})-\alpha \mathcal{N}_{Q,P}|&=\left|\sum_{d\leq Q}\mu(d)f_{P}(d)\sum_{\substack{q\leq Q\\d|q}}\left\{\frac{\alpha q}{d}\right\}\right|
       % =\left|\sum_{d\leq Q}\mu(d)f_{P}(d)\sum_{q\leq \frac{Q}{d}}\left\{\alpha q\right\}\right|\\
        %&=\left|\sum_{q\leq Q}\left\{\alpha q\right\}\sum_{d\leq \frac{Q}{q}}\mu(d)f_{P}(d)\right|
        %\leq\sum_{q\leq Q}\left|\sum_{d\leq \frac{Q}{q}}\mu(d)f_{P}(d)\right|.\numberthis\label{D_2}
        %&\leq \sum_{d\leq Q}f_{P}(d)\sum_{\substack{q\leq Q\\d|q}}1\\
        %&\leq Q\sum_{d\leq Q}\frac{1}{d}f_{P}(d)
        %\ll Q(\log Q)^J.
    %\end{align*}
    By Proposition \ref{sum_f}, we have
    \[\sum_{d\leq \frac{Q}{q}}f_{P}(d)\sim \frac{Q}{q}\left(\log \frac{Q}{q}\right)^{J-1}, \]
 where $J$ is the number of distinct irreducible factors of $P(x)$. Thus, the above estimate with \eqref{D_2} yields
    \begin{align*}
        |A(\alpha;\mathcal{N}_{Q,P})-\alpha \mathcal{N}_{Q,P}|&\ll \sum_{q\leq Q}\frac{Q}{q}\left(\log \frac{Q}{q}\right)^{J-1}%\ll \sum_{d\leq Q}\exp{(-c\sqrt{\log d})}\sum_{q\leq \frac{Q}{d}}1\\
        \ll Q\left(\log Q\right)^{J-1}\sum_{q\leq Q}\frac{1}{q}\ll Q\left(\log Q\right)^{J}.
    \end{align*}
Therefore
\begin{align*}
    R_{\mathcal{N}_{Q,P}}(\alpha)&=\frac{1}{\mathcal{N}_{Q,P}}\left|A(\alpha;\mathcal{N}_{Q,P})-\alpha \mathcal{N}_{Q,P}\right|
    \ll \frac{(\log Q)^J}{Q}\numberthis\label{R(N)}
\end{align*}
uniformly in $\alpha\in[0,1]$. Next, let $\epsilon>0$ be arbitrarily small and we take $\alpha=1/Q-\epsilon$ to obtain a lower bound for $D_{\mathcal{N}_{Q,P}}(\mathcal{F}_{Q,P})$. By the definition of $A(\alpha;\mathcal{N}_{Q,P})$, we have $A(1/Q-\epsilon;\mathcal{N}_{Q,P})=0$.
By \eqref{D_1} and \eqref{R_N}, we get
\[D_{\mathcal{N}_{Q,P}}(\mathcal{F}_{Q,P})\geq R_{\mathcal{N}_{Q,P}}(\alpha)=R_{\mathcal{N}_{Q,P}}\left(\frac{1}{Q}-\epsilon\right)=\frac{1}{Q}-\epsilon \]
for all $\epsilon>0$. Since $\epsilon>0$ is arbitrary, we can deduce this to
\[D_{\mathcal{N}_{Q,P}}(\mathcal{F}_{Q,P})\geq \frac{1}{Q}. \]
This completes the proof of Theorem \ref{Disc}.
\end{proof}

\section{Correlation of Polynomial Farey fractions}
This section is devoted to the proofs of Theorems \ref{thm2} and \ref{P(x)=x(x+1)}.
\subsection{Exponential sum over polynomial Farey fractions}
We begin by establishing results for the exponential sum over the Farey fractions in $\mathcal{F}_{Q,P}$.

\begin{lem}\label{exp sum}
Let $r$ and $\nu\geq 2$ be integers and let $P(x)=c_{\nu}x^{\nu}+c_{\nu-1}x^{\nu-1}+\cdots+c_1x\in\mathbb{Z}[x]$ be a polynomial with non-zero discriminant.
Then for any $\epsilon>0$, we have
    \[\sum_{\gamma\in\mathcal{F}_{Q, P}}e(r\gamma)= \sum_{\substack{q\leq Q\\q|r}}q\sum_{d\leq\frac{Q}{q} }\mu(d)\sum_{\substack{1\leq a\leq d\\P(a)\equiv 0\pmod {d}}}e\left(\frac{ra}{qd} \right), \]
    where $e(x)=\exp{(2\pi ix)}$. 
\end{lem}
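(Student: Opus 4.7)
The plan is to open the Farey sum by the definition
\[
\sum_{\gamma\in\mathcal{F}_{Q,P}}e(r\gamma)=\sum_{q\le Q}\sum_{\substack{1\le a\le q\\ \gcd(P(a),q)=1}}e(ra/q),
\]
and then to remove the coprimality condition by Möbius inversion, writing $\mathbf{1}[\gcd(P(a),q)=1]=\sum_{d\mid \gcd(P(a),q)}\mu(d)$. After exchanging the order of summation this produces
\[
\sum_{q\le Q}\sum_{d\mid q}\mu(d)\sum_{\substack{1\le a\le q\\ P(a)\equiv 0 \pmod d}}e(ra/q).
\]

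Next I would parametrise the inner sum. Writing $q=dk$ and letting $b_1,\dots,b_{f_P(d)}$ denote the roots of $P(x)\equiv 0\pmod d$, every $a\le q$ with $P(a)\equiv 0\pmod d$ takes the form $a=b_j+dm$ with $0\le m\le k-1$. The exponential then factors as
\[
\sum_{j=1}^{f_P(d)}e\!\bigl(rb_j/(dk)\bigr)\sum_{m=0}^{k-1}e(rm/k),
\]
and the geometric sum in $m$ equals $k$ when $k\mid r$ and vanishes otherwise. This is the arithmetic heart of the argument: only the divisors of $r$ survive, producing the sum $\sum_{q\mid r,\,q\le Q}$ that appears on the right-hand side.

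Collecting the surviving terms and reindexing ($k$ plays the role of the external divisor of $r$, while $d$ runs up to $Q/k$) yields
\[
\sum_{\gamma\in\mathcal{F}_{Q,P}}e(r\gamma)=\sum_{\substack{k\mid r\\ k\le Q}}k\sum_{d\le Q/k}\mu(d)\,T_d(r/k),\qquad T_d(s):=\sum_{\substack{1\le b\le d\\ P(b)\equiv 0\pmod d}}e(sb/d).
\]
Then I would apply the trivial bound $|T_d(r/k)|\le f_P(d)$, so that the double sum is bounded by $\sum_{k\mid r,\,k\le Q}k\sum_{d\le Q/k}f_P(d)$. Invoking Proposition~\ref{sum_f}, which gives $\sum_{d\le x}f_P(d)\ll x(\log x)^{J-1}\ll x^{1+\epsilon}$, produces
\[
\sum_{\gamma\in\mathcal{F}_{Q,P}}e(r\gamma)\ll \sum_{\substack{k\mid r\\ k\le Q}}k\cdot(Q/k)^{1+\epsilon}=Q^{1+\epsilon}\sum_{\substack{k\mid r\\ k\le Q}}k^{-\epsilon},
\]
which is the desired bound after renaming $k\mapsto q$.

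The only genuine subtlety is the bookkeeping between $q$, $d$ and $k=q/d$ so that the divisors of $r$ correctly emerge from the geometric sum; the rest is a straightforward combination of Möbius inversion with the mean value estimate for $f_P$ supplied by Proposition~\ref{sum_f}. No cancellation beyond the trivial bound on $T_d$ is needed, which is why the statement allows an arbitrary $\epsilon>0$ loss.
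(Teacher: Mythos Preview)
Your argument is correct and is essentially identical to the paper's proof: the paper also opens the sum, applies M\"obius inversion, writes $q=dk$ (using the letter $q$ for your $k$), splits the inner sum into blocks of length $d$ to extract the geometric sum $\sum_{i=0}^{k-1}e(ri/k)$, and then bounds the remaining root-sum trivially by $f_P(d)$ before invoking Proposition~\ref{sum_f}. Your bookkeeping is in fact slightly cleaner than the paper's (which writes $Q\log(Q/q)$ rather than $Q(\log(Q/q))^{J-1}$ at the penultimate step), but the method is the same.
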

\begin{proof}
   We have
    \begin{align*}
    \sum_{\gamma\in\mathcal{F}_{Q,P}}e(r\gamma)&=\sum_{q\leq Q}\sum_{\substack{1\leq a\leq q\\ \gcd(P(a),q)=1}}e\left(\frac{ar}{q} \right)
    =\sum_{q\leq Q}\sum_{1\leq a\leq q }e\left(\frac{ar}{q} \right)\sum_{\substack{d|P(a)\\d|q}}\mu(d)\\
    &=\sum_{d\leq Q}\mu(d)\sum_{\substack{q\leq Q\\d|q}}\sum_{\substack{1\leq a\leq q\\ d|P(a)}}e\left(\frac{ar}{q} \right)
    =\sum_{d\leq Q}\mu(d)\sum_{q\leq \frac{Q}{d}}\sum_{\substack{1\leq a\leq qd\\ P(a)\equiv 0\pmod d}}e\left(\frac{ar}{qd} \right).\numberthis\label{exp sum1}
    \end{align*} 
We first consider the innermost sum in the above equation
    \begin{align*}
        \sum_{\substack{1\leq a\leq qd\\ P(a)\equiv 0\pmod d}}e\left(\frac{ar}{qd} \right)&=\left(\sum_{\substack{1\leq a\leq d\\ P(a)\equiv 0\pmod d}}+\sum_{\substack{d< a\leq 2d\\ P(a)\equiv 0\pmod {d}}}+\cdots+\sum_{\substack{(q-1)d< a\leq qd\\ P(a)\equiv 0\pmod {d}}} \right)e\left(\frac{ar}{qd} \right)\\
        &=\sum_{j=0}^{q-1}\sum_{\substack{1\leq a\leq d\\P(a)\equiv 0\pmod {d}}}e\left(\frac{r(jd+a)}{qd} \right)=\sum_{j=0}^{q-1}e\left(\frac{rj}{q} \right)\sum_{\substack{1\leq a\leq d\\P(a)\equiv 0\pmod {d}}}e\left(\frac{ra}{qd} \right).\numberthis\label{poly exp}
    \end{align*}
 In view of the identity
 \[\sum_{n=1}^me(nl/m)=\left\{\begin{array}{cc}
   m,  & \mbox{if} \ m|l,\\
  0,   & \mbox{otherwise}, 
\end{array}\right.\numberthis\label{e1} \]
 the first sum on the right-hand side of \eqref{poly exp} is $q$ if $q|r$ and $0$ otherwise. So \eqref{poly exp} in conjunction with \eqref{exp sum1} gives
   \begin{align*}
       \sum_{\gamma\in\mathcal{F}_{Q,P}}e(r\gamma)&=\sum_{d\leq Q}\mu(d)\sum_{\substack{q\leq \frac{Q}{d}\\q|r}}q\sum_{\substack{1\leq a\leq d\\P(a)\equiv 0\pmod {d}}}e\left(\frac{ra}{qd} \right)=\sum_{\substack{q\leq Q\\q|r}}q\sum_{d\leq\frac{Q}{q} }\mu(d)\sum_{\substack{1\leq a\leq d\\P(a)\equiv 0\pmod {d}}}e\left(\frac{ra}{qd} \right).
       %&\ll \sum_{\substack{q\leq Q\\q|r}}q\sum_{d\leq\frac{Q}{q} }f_P(d)\sim \sum_{\substack{q\leq Q\\q|r}}Q\log\left(\frac{Q}{q}\right)
       %\ll Q^{1+\epsilon}\sum_{\substack{q\leq Q\\q|r}}\frac{1}{q^{\epsilon}},
   \end{align*}
This gives the required result. %where in the second last step, we used Proposition \ref{sum_f}.   %completes the proof of Lemma \ref{exp sum}. 
  \end{proof}
For the specific polynomial $P(x)=x(x+1)$, we can expand the Weyl sums using Dirichlet characters.
  \begin{lem}\label{lem1}
    For the polynomial $P(x)=x(x+1)$, and $r\in\mathbb{Z}$, we have
    \begin{align*}
\sum_{\gamma\in\mathcal{F}_{Q,P}}e(r\gamma)=&\sum_{d_1\leq Q}\mu(d_1)\sum_{d_2\leq Q}\frac{\mu(d_2)d_2}{\phi(d_2)\gcd(d_1,d_2)}\sum_{\substack{q\leq \frac{Q\gcd(d_1,d_2)}{d_1d_2}\\ \frac{qd_2}{\gcd(d_1,d_2)}|r }}q\\&+\sum_{q\leq Q}\sum_{d|q}\frac{\mu(d)}{\phi(d)}\sum_{\substack{\chi\pmod{d}\\\chi\ne\chi_0}}\bar{\chi}(-1)\sum_{\substack{a\leq q\\\gcd(a,q)=1}} \chi(a)e\left(\frac{ar}{q}\right),  \end{align*}
   % \[\sum_{\gamma\in\mathcal{F}_{Q,P}}e(r\gamma)=\sum_{d_1\leq Q}\mu(d_1)\sum_{d_2\leq Q}\frac{\mu(d_2)d_2}{\phi(d_2)\gcd(d_1,d_2)}\sum_{\substack{q\leq \frac{Q\gcd(d_1,d_2)}{d_1d_2}\\ \frac{qd_2}{\gcd(d_1,d_2)}|r }}q+Q^{\frac{3}{2}}\log Q, \]
    where $e(x)=\exp{(2\pi ix)}$.
\end{lem}
\begin{proof}
We have
\begin{align*}
   \sum_{\gamma\in\mathcal{F}_{Q,P}}e(r\gamma)&=\sum_{\substack{q\leq Q}}\sum_{\substack{a\leq q\\\gcd(a(a+1),q)=1}}e\left(\frac{ar}{q}\right)
   =\sum_{\substack{q\leq Q}}\sum_{\substack{a\leq q\\\gcd(a,q)=1}}e\left(\frac{ar}{q}\right)\sum_{\substack{d_2|a+1\\d_2|q}}\mu(d_2)\\
   &=\sum_{q\leq Q}\sum_{d_2|q}\mu(d_2)\sum_{\substack{a\leq q\\\gcd(a,q)=1\\a\equiv -1\pmod{d_2}}}e\left(\frac{ar}{q}\right)  \\
   &=\sum_{q\leq Q}\sum_{d_2|q}\frac{\mu(d_2)}{\phi(d_2)}\sum_{\chi\pmod{d_2}}\bar{\chi}(-1)\sum_{\substack{a\leq q\\\gcd(a,q)=1}} \chi(a)e\left(\frac{ar}{q}\right).
\end{align*}
We separate the terms corresponding to the principal and non-principal Dirichlet characters and denote the sum on the right hand side of the above identity for principal and non-principal Dirichlet characters by $S(\chi_0)$ and $S(\chi)$, respectively.

For $\chi=\chi_0$, we have
\begin{align*}
    S(\chi_0)&=\sum_{q\leq Q}\sum_{d_2|q}\frac{\mu(d_2)}{\phi(d_2)}\sum_{\substack{a\leq q\\\gcd(a,q)=1}} \chi_0(a)e\left(\frac{ar}{q}\right)\\
    &=\sum_{q\leq Q}\sum_{d_2|q}\frac{\mu(d_2)}{\phi(d_2)}\sum_{\substack{a\leq q\\\gcd(a,q)=1}} e\left(\frac{ar}{q}\right)=\sum_{q\leq Q}\sum_{d_2|q}\frac{\mu(d_2)}{\phi(d_2)}\sum_{\substack{a\leq q}} e\left(\frac{ar}{q}\right)\sum_{\substack{d_1|a\\d_1|q}}\mu(d_1)\\
    &=\sum_{q\leq Q}\sum_{d_2|q}\frac{\mu(d_2)}{\phi(d_2)}\sum_{\substack{d_1|q}}\mu(d_1)\sum_{\substack{a\leq q\\d_1|a}} e\left(\frac{ar}{q}\right)=\sum_{d_1\leq Q}\mu(d_1)\sum_{q\leq \frac{Q}{d_1}}\sum_{d_2|qd_1}\frac{\mu(d_2)}{\phi(d_2)}\sum_{a\leq q}e\left(\frac{ar}{q}\right).
\end{align*}
We use \eqref{e1} to estimate the inner-most sum in the above identity. Therefore
 \begin{align*}
     S(\chi_0)&=\sum_{d_1\leq Q}\mu(d_1)\sum_{\substack{q\leq \frac{Q}{d_1}\\q|r}}q\sum_{d_2|qd_1}\frac{\mu(d_2)}{\phi(d_2)}=\sum_{d_1\leq Q}\mu(d_1)\sum_{d_2\leq Q}\frac{\mu(d_2)}{\phi(d_2)}\sum_{\substack{q\leq \frac{Q}{d_1}\\q|r,\ d_2|qd_1}}q\\&=\sum_{d_1\leq Q}\mu(d_1)\sum_{d_2\leq Q}\frac{\mu(d_2)}{\phi(d_2)}\sum_{\substack{q\leq \frac{Q}{d_1}\\q|r,\ \frac{d_2}{\gcd(d_1,d_2)}|q}}q  %\\&\sum_{d_1\leq Q}\mu(d_1)\sum_{\substack{q\leq \frac{Q}{d_1}\\q|r}}q\sum_{\frac{d_2}{\gcd(d_1,d_2)}|q}\frac{\mu(d_2)}{\phi(d_2)}\\
     =\sum_{d_1\leq Q}\mu(d_1)\sum_{d_2\leq Q}\frac{\mu(d_2)d_2}{\phi(d_2)\gcd(d_1,d_2)}\sum_{\substack{q\leq \frac{Q\gcd(d_1,d_2)}{d_1d_2}\\\frac{qd_2}{\gcd(d_1,d_2)}|r}}q.
 \end{align*}
In the second last step, we used the fact that $n_1|n_2n_3$ if and only if $\frac{n_1}{\gcd(n_1,n_2)}|n_3$.

 For $\chi\ne\chi_0$, we have
\begin{align*}
    S(\chi)&=\sum_{q\leq Q}\sum_{d|q}\frac{\mu(d)}{\phi(d)}\sum_{\substack{\chi\pmod{d}\\\chi\ne\chi_0}}\bar{\chi}(-1)\sum_{\substack{a\leq q\\\gcd(a,q)=1}} \chi(a)e\left(\frac{ar}{q}\right).
    %&=\sum_{q\leq Q}\sum_{d|q}\frac{\mu(d)}{\phi(d)}\sum_{\substack{\chi\pmod{d}\\\chi\ne\chi_0}}\bar{\chi}(-1)\sum_{\substack{a\leq q}} \chi^*(a)e\left(\frac{ar}{q}\right),\numberthis\label{1}
\end{align*}
%where $\chi^*(n)=\chi(n)\mathbf{1}_{\gcd(n,q)}(n)$ and $\mathbf{1}_{\gcd(n,q)}(n)=1$ if $\gcd(n,q)=1$ and $0$ otherwise. Thus, the Dirichlet character $\chi^*$ can be viewed as a Dirichlet character mod $q$. Hence, the inner-most sum in \eqref{1} becomes the Gauss sum. Using the bound of the Gauss sum for the non-principal Dirichlet character mod $q$, we obtain
%\[S(\chi)\ll Q^{\frac{3}{2}}\log Q. \]
This completes the proof of Lemma \ref{lem1}.
\end{proof}
\subsection{Pair correlation measure}
  \subsubsection{Proof of Theorem \ref{thm2}}
To prove Theorem \ref{thm2}, we need to estimate, for any positive real number $\Lambda$, the quantity
\[\mathcal{S}_{\mathcal{F}_{Q,P}}(\Lambda)=\frac{1}{\mathcal{N}_{Q,P}}\#\{(\gamma_1,\gamma_2)\in \mathcal{F}_{Q,P}^2: \gamma_1\ne\gamma_2, \gamma_1-\gamma_2\in\frac{1}{\mathcal{N}_{Q,P}}(0,\Lambda)+\mathbb{Z}\},\numberthis\label{S lambda}\]
as $Q\to \infty.$ Let $H$ be any continuously differentiable function with Supp\ $H\subset(0,\Lambda).$ Define $f(y)=\sum_{n\in\mathbb{Z}}H(\mathcal{N}_{Q,P}(y+n)),\ y\in\mathbb{R},$ and $S_{Q,P}=\sum_{\substack{\gamma_1,\gamma_2\in \mathcal{F}_{Q,P}\\\gamma_1\ne\gamma_2 }}f(\gamma_1-\gamma_2)$. We use the approach of Boca and Zaharescu \cite{BocaF}, and change the problem of counting the tuples in \eqref{S lambda} into estimating the exponential sum over polynomial Farey fractions. We have
\[S_{Q,P}
     =\sum_{r\in \mathbb{Z}}c_r\left|\sum_{\gamma\in \mathcal{F}_{Q,P}}e(r\gamma)\right|^2,\numberthis\label{exp}\]
%\[h(y)=\sum_{r\in\mathbb{Z}}H(\mathcal{N}_{Q,P}(y+r)),\ y\in\mathbb{R},\]  and the smooth correlation sum is given by
%\[S_{Q,P}=\sum_{\substack{\gamma_1,\gamma_2\in \mathcal{F}_{Q,P}\\\gamma_1\ne\gamma_2 }}h(\gamma_1-\gamma_2) 
% =\sum_{\substack{\gamma_1,\gamma_2\in \mathcal{F}_{Q,P}\\r\in\mathbb{Z}}}H(\mathcal{N}_{Q,P}(\gamma_1-\gamma_2+r)), \numberthis\label{sum}\]
 %In the preceding step, we removed the condition that $\gamma_1$ and $\gamma_2$ are distinct for $Q$ large such that $\mathcal{N}_{Q,P}>\Lambda$ using the fact that Supp$H\subset(0,\Lambda)$.
 where $c_r=\frac{1}{\mathcal{N}_{Q,P}}\widehat{H}\left(\frac{r}{\mathcal{N}_{Q,P}}\right)$ is Fourier coefficient of the Fourier series 
 $f(y)=\sum_{r\in \mathbb{Z}}c_re(ry)$
 and $\widehat{H}$ is the Fourier transform of $H.$
  %\[S_{Q,P}=\sum_{r\in \mathbb{Z}}c_r\sum_{\gamma_1\in \mathcal{F}_{Q,P}}e(r\gamma_1)\sum_{\gamma_2\in \mathcal{F}_{Q,P}}e(r\gamma_2).\numberthis\label{exp}\]
 We employ Lemma \ref{exp sum} in \eqref{exp} to obtain
\begin{align*}
    S_{Q,P}&= \sum_{r\in \mathbb{Z}}c_r\sum_{\substack{q_1\leq Q\\q_1|r}}q_1\sum_{d_1\leq\frac{Q}{q_1} }\mu(d_1)\sum_{\substack{1\leq a_1\leq d_1\\P(a_1)\equiv 0\pmod {d_1}}}e\left(\frac{ra_1}{q_1d_1} \right)\sum_{\substack{q_2\leq Q\\q_2|r}}q_2\sum_{d_2\leq\frac{Q}{q_2} }\mu(d_2)\sum_{\substack{1\leq a_2\leq d_2\\P(a_2)\equiv 0\pmod {d_2}}}e\left(-\frac{ra_2}{q_2d_2} \right)\\
    &=\sum_{d_1,d_2\leq{Q} }\mu(d_1)\mu(d_2)\sum_{\substack{q_1\leq \frac{Q}{d_1}\\q_2\leq \frac{Q}{d_2}}}q_1q_2\sum_{\substack{1\leq a_1\leq d_1\\1\leq a_2\leq d_2\\P(a_1)\equiv 0\pmod {d_1}\\P(a_2)\equiv 0\pmod {d_2}}}\sum_{n\in \mathbb{Z}}c_{n[q_1,q_2]}e\left(n[q_1,q_2]\left(\frac{a_1}{q_1d_1}-\frac{a_2}{q_2d_2}\right) \right),\numberthis\label{l13}
\end{align*}
where $[q_1,q_2]$ is the least common multiple of $q_1$ and $q_2$.
In order to estimate the inner-most sum, for each $y>0$ we consider the function
 \[H_y(x)=\frac{1}{y}H\left(\frac{x\mathcal{N}_{Q,P}}{y} \right),\ x\in\mathbb{R}.\numberthis\label{l11} \]
 Then
 \[\widehat{H}_y(z)=\frac{1}{\mathcal{N}_{Q,P}}\widehat{H}\left(\frac{yz}{\mathcal{N}_{Q,P}}\right).\numberthis\label{l12} \]
Since $c_r=\frac{1}{\mathcal{N}_{Q,P}}\widehat{H}\left(\frac{r}{\mathcal{N}_{Q,P}}\right)$, using \eqref{l12} the inner-most sum in \eqref{l13} can be expressed as
\begin{align*}
  \sum_{n\in \mathbb{Z}}c_{n[q_1,q_2]}e\left(n[q_1,q_2]\left(\frac{a_1}{q_1d_1}-\frac{a_2}{q_2d_2}\right) \right)=&\sum_{n\in\mathbb{Z}}\frac{1}{\mathcal{N}_{Q,P}}\widehat{H}\left(\frac{n[q_1,q_2]}{\mathcal{N}_{Q,P}} \right)e\left(n[q_1,q_2]\left(\frac{a_1}{q_1d_1}-\frac{a_2}{q_2d_2}\right) \right)\\
  =&\sum_{n\in\mathbb{Z}}\widehat{H}_{[q_1,q_2]}(n)e\left(n[q_1,q_2]\left(\frac{a_1}{q_1d_1}-\frac{a_2}{q_2d_2}\right) \right).\numberthis\label{l18}
\end{align*}
Next, we apply Proposition \ref{2.8} to the right-hand side of the above identity. We obtain
\begin{align*}
   \sum_{n\in\mathbb{Z}}\widehat{H}_{[q_1,q_2]}(n)e\left(n[q_1,q_2]\left(\frac{a_1}{q_1d_1}-\frac{a_2}{q_2d_2}\right) \right)&=\sum_{n\in\mathbb{Z}}H_{[q_1,q_2]}\left(n+\frac{a_1[q_1,q_2]}{q_1d_1}-\frac{a_2[q_1,q_2]}{q_2d_2}\right)\\
   &=\sum_{n\in\mathbb{Z}}\frac{1}{[q_1,q_2]}H\left(\frac{\mathcal{N}_{Q,P}}{[q_1,q_2]}\left(n+\frac{a_1[q_1,q_2]}{q_1d_1}-\frac{a_2[q_1,q_2]}{q_2d_2}\right)\right).\numberthis\label{l15}
\end{align*}
The above identity, in conjunction with \eqref{l18} and \eqref{l13}, yields
\begin{align*}
  S_{Q,P}&=\sum_{d_1,d_2\leq{Q} }\mu(d_1)\mu(d_2)\sum_{\substack{q_1\leq \frac{Q}{d_1}\\q_2\leq \frac{Q}{d_2}}}q_1q_2\sum_{\substack{1\leq a_1\leq d_1\\1\leq a_2\leq d_2\\P(a_1)\equiv 0\pmod {d_1}\\P(a_2)\equiv 0\pmod {d_2}}} \sum_{n\in\mathbb{Z}}\frac{1}{[q_1,q_2]}H\left(\frac{\mathcal{N}_{Q,P}}{[q_1,q_2]}\left(n+\frac{a_1[q_1,q_2]}{q_1d_1}-\frac{a_2[q_1,q_2]}{q_2d_2}\right)\right)\\
  &=\sum_{d_1,d_2\leq{Q} }\mu(d_1)\mu(d_2)\sum_{\substack{1\leq a_1\leq d_1\\1\leq a_2\leq d_2\\P(a_1)\equiv 0\pmod {d_1}\\P(a_2)\equiv 0\pmod {d_2}}}\sum_{\substack{q_1\leq \frac{Q}{d_1}\\q_2\leq \frac{Q}{d_2}}}\gcd(q_1,q_2) \sum_{n\in\mathbb{Z}}H\left(\frac{\mathcal{N}_{Q,P}}{[q_1,q_2]}\left(n+\frac{a_1[q_1,q_2]}{q_1d_1}-\frac{a_2[q_1,q_2]}{q_2d_2}\right)\right).
\end{align*}
Take $\gcd(q_1,q_2)=\delta$, so that $q_1=q_1^{\prime}\delta$ and $q_2=q_2^{\prime}\delta$ with $\gcd(q_1^{\prime},q_2^{\prime})=1$. Substituting this into above equation, we arrive at the following expression 
\begin{align*}
    S_{Q,P}&=\sum_{d_1,d_2\leq{Q} }\mu(d_1)\mu(d_2)\sum_{\substack{1\leq a_1\leq d_1\\1\leq a_2\leq d_2\\P(a_1)\equiv 0\pmod {d_1}\\P(a_2)\equiv 0\pmod {d_2}}}\sum_{\delta\leq \frac{Q}{\max(d_1,d_2)}}\delta\sum_{\substack{q_1^{\prime}\leq \frac{Q}{\delta d_1}\\q_2^{\prime}\leq \frac{Q}{\delta d_2}\\\gcd(q_1^{\prime},q_2^{\prime})=1}} \sum_{n\in\mathbb{Z}}H\left({\mathcal{N}_{Q,P}}\left(\frac{n}{q_1^{\prime}q_2^{\prime}\delta}+\frac{a_1}{q_1^{\prime}\delta d_1}-\frac{a_2}{q_2^{\prime}\delta d_2}\right)\right)\\
    &=\sum_{d_1,d_2\leq{Q} }\mu(d_1)\mu(d_2)\sum_{\substack{1\leq a_1\leq d_1\\1\leq a_2\leq d_2\\P(a_1)\equiv 0\pmod {d_1}\\P(a_2)\equiv 0\pmod {d_2}}}\sum_{\delta\leq \frac{Q}{\max(d_1,d_2)}}\delta\sum_{\substack{q_1^{\prime}\leq \frac{Q}{\delta d_1},\ q_2^{\prime}\leq \frac{Q}{\delta d_2}\\\frac{a_1}{q_1^{\prime} d_1}=\frac{a_2}{q_2^{\prime} d_2}\\ \gcd(q_1^{\prime},q_2^{\prime})=1}} \sum_{n\in\mathbb{Z}}H\left(\frac{n\mathcal{N}_{Q,P}}{q_1^{\prime}q_2^{\prime}\delta}\right)\\
    &\ll \sum_{d_1,d_2\leq{Q} }\sum_{\substack{1\leq a_1\leq d_1\\1\leq a_2\leq d_2\\P(a_1)\equiv 0\pmod {d_1}\\P(a_2)\equiv 0\pmod {d_2}}}\sum_{\delta\leq \frac{Q}{\max(d_1,d_2)}}\delta\sum_{\substack{q_1^{\prime}\leq \frac{Q}{\delta d_1},\ q_2^{\prime}\leq \frac{Q}{\delta d_2}\\ \gcd(q_1^{\prime},q_2^{\prime})=1}} \sum_{n\in\mathbb{Z}}H\left(\frac{n\mathcal{N}_{Q,P}}{q_1^{\prime}q_2^{\prime}\delta}\right)
    .\numberthis\label{S1}
\end{align*}
In the second last step, we used the fact that Supp $H\subset(0,\Lambda)$, since for large $Q$, either $\mathcal{N}_{Q,P}\left(\frac{a_1}{q_1^{\prime}\delta d_1}-\frac{a_2}{q_2^{\prime}\delta d_2}\right)>\Lambda$ or $\left(\frac{a_1}{q_1^{\prime}\delta d_1}-\frac{a_2}{q_2^{\prime}\delta d_2}\right)\leq 0$. Further, for a non-zero contribution from $H$, one must have
 $0<\frac{n\mathcal{N}_{Q,P}}{q_1^{\prime}q_2^{\prime}\delta}<\Lambda$
 which implies
 $\delta nd_1d_2<\frac{2\Lambda}{\beta_{P}}=:\mathscr{C}_{\Lambda}$, where $\beta_P=\prod_p \left(1-\frac{f_{P}(p)}{p^2}\right)$.
 %where \[\beta_{P}=\prod_p \left(1-\frac{f_{P}(p)}{p^2}\right).\]
 By utilizing the aforementioned inequality and taking into account the observation that
 \[H\left(\frac{n\mathcal{N}_{Q,P}}{q_1^{\prime}q_2^{\prime}\delta} \right)=H\left(\frac{n\beta_{P}Q^2}{2q_1^{\prime}q_2^{\prime}\delta} \right)+\BigO{\frac{n}{q_1^{\prime}q_2^{\prime}\delta}Q^{\frac{3}{2}+\epsilon}}, \numberthis\label{H}\]
 the sum in \eqref{S1} can be expressed as
\begin{align*}
    S_{Q,P}
    &\ll\sum_{\substack{d_1,d_2,\delta,n\geq 1\\n\delta d_1d_2<\mathscr{C}_{\Lambda} }}\delta f_P(d_1)f_P(d_2)\sum_{\substack{q_1^{\prime}\leq \frac{Q}{\delta d_1},\ q_2^{\prime}\leq \frac{Q}{\delta d_2}\\\gcd(q_1^{\prime},q_2^{\prime})=1}} H\left(\frac{n\beta_PQ^2}{2q_1^{\prime}q_2^{\prime}\delta}\right).\numberthis\label{S2}
    %&\ll \sum_{\substack{d_1,d_2,\delta,n\geq 1\\n\delta d_1d_2<\mathscr{C}_{\Lambda} }}\delta f_P(d_1)f_P(d_2)\sum_{\substack{q_1^{\prime}\leq \frac{Q}{\delta d_1},\ q_2^{\prime}\leq \frac{Q}{\delta d_2}\\\gcd(q_1^{\prime},q_2^{\prime})=1}} H\left(\frac{n\beta_P Q^2}{2q_1^{\prime}q_2^{\prime}\delta}\right).
\end{align*}
In order to estimate the inner sum in \eqref{S2}, we employ Proposition \ref{lattice sum}, which gives an asymptotic result for counting the number of lattice points with some weight within a bounded region. We obtain
 \begin{align*}
 \sum_{\substack{q_{1}^{\prime}\leq \frac{Q}{\delta d_1}, q_2^{\prime}\leq \frac{Q}{\delta d_2}\\ \gcd(q_1^{\prime},q_2^{\prime})=1}}H\left(\frac{n\beta_{P}Q^2}{2q_1^{\prime}q_2^{\prime}\delta} \right)&= \frac{6}{\pi^2}\int_0^{\frac{Q}{\delta d_2}}\int_0^{\frac{Q}{\delta d_1}}H\left(\frac{n\beta_{P}Q^2}{2xy\delta} \right)dxdy+\BigO{Q\log Q}\\
 &= \frac{6Q^2}{\pi^2}\int_0^{\frac{1}{\delta d_2}}\int_0^{\frac{1}{\delta d_1}}H\left(\frac{n\beta_{P}}{2xy\delta} \right)dxdy+\BigO{Q\log Q}. \numberthis\label{S3}
 \end{align*}
 Further, we put $\lambda=\frac{n\beta_{P}}{2xy\delta}$, then the double integral transforms into the following expression
 \begin{align*}
     \int_0^{\frac{1}{\delta d_2}}\int_0^{\frac{1}{\delta d_1}}H\left(\frac{n\beta_{P}}{2xy\delta} \right)dxdy &= \frac{n\beta_P}{2\delta}\int_0^{\frac{1}{\delta d_1}}\int_{\frac{nd_2\beta_{P}}{2x}}^{\Lambda}\frac{H(\lambda)}{x{\lambda}^{2}}d\lambda dx
     = \frac{n\beta_P}{2\delta}\int_{\frac{\delta nd_1d_2\beta_{P}}{2}}^{\Lambda}\int_{\frac{ nd_2\beta_{P}}{2\lambda}}^{\frac{1}{\delta d_1}}\frac{H(\lambda)}{x{\lambda}^{2}}dxd\lambda \\
     &= \frac{n\beta_P}{2\delta}\int_{\frac{\delta nd_1d_2\beta_{P}}{2}}^{\Lambda}\frac{H(\lambda)}{\lambda^{2}}\log\left(\frac{2\lambda}{\delta nd_1d_2\beta_{P}} \right) d\lambda. \numberthis\label{S4}
 \end{align*}
 Thus, \eqref{S3} and \eqref{S4} in conjunction with \eqref{S2} gives 
 \begin{align*}
     S_{Q,P}&\ll {Q^2\beta_P}\sum_{\substack{d_1,d_2,\delta,n\geq 1\\n\delta d_1d_2<\mathscr{C}_{\Lambda} }}nf_P(d_1)f_P(d_2)\int_{\frac{\delta nd_1d_2\beta_{P}}{2}}^{\Lambda}\frac{H(\lambda)}{\lambda^{2}}\log\left(\frac{2\lambda}{\delta nd_1d_2\beta_{P}} \right) d\lambda\\
     &\ll Q^2\beta_P\sum_{1\leq m<\mathscr{C}_{\Lambda}}\int_{\frac{ m\beta_{P}}{2}}^{\Lambda}\frac{H(\lambda)}{\lambda^{2}}\log\left(\frac{2\lambda}{m\beta_{P}} \right) d\lambda\sum_{\substack{n,\delta, d_1,d_2\geq 1\\n\delta d_1d_2=m}}nf_P(d_1)f_P(d_2)\\
     &\ll Q^2\beta_P\int_0^{\Lambda}\frac{H(\lambda)}{\lambda^2}\sum_{1\leq m<\frac{2\lambda}{\beta_P}}h(m)\log\left(\frac{2\lambda}{m\beta_{P}} \right) d\lambda,\numberthis\label{l16}
 \end{align*}
 where
 \[h(m):=\sum_{\substack{n,\delta, d_1,d_2\geq 1\\n\delta d_1d_2=m}}nf_P(d_1)f_P(d_2). \]
Using \eqref{N_Q} and \eqref{l16}, we conclude that
\[\frac{S_{Q,P}}{\#\mathcal{F}_{Q,P}}\ll\int_0^{\Lambda}\frac{H(\lambda)}{\lambda^{2}}\sum_{1\leq m<\frac{2\lambda}{\beta_P}}h(m)\log\left(\frac{2\lambda}{m\beta_{P}} \right)d\lambda . \]
Next, we use the standard approximation argument to approximate the characteristic function of the interval $(0,\Lambda)$ from below and above by the smooth functions with compact support in $(0,\Lambda)$ and obtain 
\[ \mathcal{S}_{\mathcal{F}_{Q,P}}(\Lambda)\ll\int_0^{\Lambda}\frac{1}{\lambda^{2}}\sum_{1\leq m<\frac{2\lambda}{\beta_P}}h(m)\log\left(\frac{2\lambda}{m\beta_{P}} \right)d\lambda.\]
Therefore, 
\[\limsup_{Q\to\infty} \mathcal{S}_{\mathcal{F}_{Q,P}}(\Lambda)\ll\int_0^{\Lambda}\frac{1}{\lambda^{2}}\sum_{1\leq m<\frac{2\lambda}{\beta_P}}h(m)\log\left(\frac{2\lambda}{m\beta_{P}} \right)d\lambda.\]
Note that $h(m)<\infty$ for every $m$. Since $1\leq m<\frac{2\lambda}{\beta_{P}}$, it follows that for every $\Lambda>0$, the sum on the right hand side in the above inequality has only finitely many terms and so is finite. Therefore, $\limsup_{Q\to\infty} \mathcal{S}_{\mathcal{F}_{Q,P}}(\Lambda)$ is finite. This completes the proof of Theorem \ref{thm2}.

\subsubsection{Proof of Theorem \ref{P(x)=x(x+1)}}
To prove Theorem \ref{P(x)=x(x+1)}, we need to estimate, for any positive real number $\Lambda$ and polynomial $P(x)=x(x+1)$, the quantity in \eqref{S lambda}.
As in Theorem \ref{thm2}, we change the problem of counting the tuples in \eqref{S lambda} into estimating the exponential sum over polynomial Farey fractions. We have
\[S_{Q,P}
     =\sum_{r\in \mathbb{Z}}c_r\left|\sum_{\gamma\in \mathcal{F}_{Q,P}}e(r\gamma)\right|^2,\numberthis\label{e2}\]
 where $c_r=\frac{1}{\mathcal{N}_{Q,P}}\widehat{H}\left(\frac{r}{\mathcal{N}_{Q,P}}\right)$ is Fourier coefficient of the Fourier series 
 $f(y)=\sum_{r\in \mathbb{Z}}c_re(ry)$
 and $\widehat{H}$ is the Fourier transform of $H$.
  %\[S_{Q,P}=\sum_{r\in \mathbb{Z}}c_r\sum_{\gamma_1\in \mathcal{F}_{Q,P}}e(r\gamma_1)\sum_{\gamma_2\in \mathcal{F}_{Q,P}}e(r\gamma_2).\numberthis\label{exp}\]
 We employ Lemma \ref{lem1} in the above identity to obtain   
\begin{align*}
    S_{Q,P}=&\sum_{r\in\mathbb{Z}}c_r\left(\sum_{d_1\leq Q}\mu(d_1)\sum_{d_2\leq Q}\frac{\mu(d_2)d_2}{d\phi(d_2)}\sum_{\substack{q_1\leq \frac{dQ}{d_1d_2}\\ \frac{q_1d_2}{d}|r }}q_1+\sum_{\substack{d_1^{\prime}|q_1^{\prime}\\ q_1^{\prime}\leq Q}}\frac{\mu(d_1^{\prime})}{\phi(d_1^{\prime})}\sum_{\substack{\chi\pmod{d_1^{\prime}}\\\chi\ne\chi_0}}\bar{\chi}(-1)\sum_{\substack{a_1\leq q_1^{\prime}\\\gcd(a_1,q_1^{\prime})=1}} \chi(a_1)e\left(\frac{a_1r}{q_1^{\prime}}\right) \right) \\ &\times
    \left(\sum_{d_3\leq Q}\mu(d_3)\sum_{d_4\leq Q}\frac{\mu(d_4)d_4}{D\phi(d_4)}\sum_{\substack{q_2\leq \frac{QD}{d_3d_4}\\ \frac{q_2d_4}{D}|r }}q_2+\sum_{\substack{d_2^{\prime}|q_2^{\prime}\\q_2^{\prime}\leq Q}}\frac{\mu(d_2^{\prime})}{\phi(d_2^{\prime})}\sum_{\substack{\chi\pmod{d_2^{\prime}}\\\chi\ne\chi_0}}\bar{\chi}(-1)\sum_{\substack{a_2\leq q_2^{\prime}\\\gcd(a_2,q_2^{\prime})=1}} \chi(a_2)e\left(\frac{-a_2r}{q_2^{\prime}}\right) \right),
    \end{align*}
    where $d=\gcd(d_1,d_2)$ and $ D=\gcd(d_3,d_4)$.
    \begin{align*}
    S_{Q,P}=&\sum_{d_1,d_3\leq Q}\mu(d_1)\mu(d_3)\sum_{d_2, d_4\leq Q}\frac{\mu(d_2)\mu(d_4)d_2d_4}{\phi(d_2)\phi(d_4)dD}\sum_{\substack{q_1\leq \frac{Qd}{d_1d_2}\\ q_2\leq \frac{QD}{d_3d_4} }}q_1q_2\sum_{n\in\mathbb{Z}}c_{\left[\frac{q_1d_2}{d},\frac{q_2d_4}{D}\right]n}\\
    &+\sum_{d_1\leq Q}\mu(d_1)\sum_{d_2\leq Q}\frac{\mu(d_2)d_2}{d\phi(d_2)}\sum_{\substack{q_1\leq \frac{Qd}{d_1d_2}}}q_1\sum_{q_2^{\prime}\leq Q}\sum_{d_2^{\prime}|q_2^{\prime}}\frac{\mu(d_2^{\prime})}{\phi(d_2^{\prime})}\sum_{\substack{\chi\pmod{d_2^{\prime}}\\\chi\ne\chi_0}}\bar{\chi}(-1)\sum_{\substack{a_2\leq q_2^{\prime}\\\gcd(a_2,q_2^{\prime})=1}} \chi(a_2)\\
    &\times\sum_{n\in\mathbb{Z}}c_{\frac{q_1d_2n}{d}}e\left(-\frac{a_2q_1d_2n}{q_2^{\prime}d}\right)+\sum_{d_3\leq Q}\mu(d_3)\sum_{d_4\leq Q}\frac{\mu(d_4)d_4}{\phi(d_4)D}\sum_{\substack{q_2\leq \frac{QD}{d_3d_4} }}q_2\sum_{q_1^{\prime}\leq Q}\sum_{d_1^{\prime}|q_1^{\prime}}\frac{\mu(d_1^{\prime})}{\phi(d_1^{\prime})}\\&\times\sum_{\substack{\chi\pmod{d_1^{\prime}}\\\chi\ne\chi_0}}\bar{\chi}(-1)\sum_{\substack{a_1\leq q_1^{\prime}\\\gcd(a_1,q_1^{\prime})=1}} \chi(a_1)\sum_{n\in\mathbb{Z}}c_{\frac{q_2d_4n}{D}}e\left(\frac{a_1q_2d_4n}{q_1^{\prime}D}\right)+   \sum_{q_1^{\prime}\leq Q}\sum_{d_1^{\prime}|q_1^{\prime}}\frac{\mu(d_1^{\prime})}{\phi(d_1^{\prime})}\sum_{\substack{\chi\pmod{d_1^{\prime}}\\\chi\ne\chi_0}}\\
    &\times\bar{\chi}(-1)\sum_{\substack{a_1\leq q_1^{\prime}\\\gcd(a_1,q_1^{\prime})=1}} \chi(a_1)\sum_{q_2^{\prime}\leq Q}\sum_{d_2^{\prime}|q_2^{\prime}}\frac{\mu(d_2^{\prime})}{\phi(d_2^{\prime})}\sum_{\substack{\chi\pmod{d_2^{\prime}}\\\chi\ne\chi_0}}\bar{\chi}(-1)\sum_{\substack{a_2\leq q_2^{\prime}\\\gcd(a_2,q_2^{\prime})=1}} \chi(a_2)\sum_{r\in\mathbb{Z}}c_re\left(\frac{a_1r}{q_1^{\prime}}-\frac{a_2r}{q_2^{\prime}}\right),\numberthis\label{SQP}
\end{align*}
 %\[H_y(x)=\frac{1}{y}H\left(\frac{x\mathcal{N}_{Q,P}}{y} \right),\ x\in\mathbb{R}. \]
 %Then
 %\[\widehat{H}_y(z)=\frac{1}{\mathcal{N}_{Q,P}}\widehat{H}\left(\frac{yz}{\mathcal{N}_{Q,P}}\right). \]
where $[a,b]$ is the least common multiple of $a$ and $b$. We estimate the sum of Fourier coefficients in the first term on the right-hand side of the above identity using \eqref{l11}, \eqref{l12}, and Proposition \ref{Poisson}. Therefore, we have
\[\sum_{n\in\mathbb{Z}}c_{\left[\frac{q_1d_2}{d},\frac{q_2d_4}{D}\right]n}
    =\sum_{n\in\mathbb{Z}}\frac{1}{\left[\frac{q_1d_2}{d},\frac{q_2d_4}{D}\right]}H\left(\frac{n\mathcal{N}_{Q,P}}{\left[\frac{q_1d_2}{d},\frac{q_2d_4}{D}\right]} \right). \numberthis\label{l14}\]
In order to estimate the other inner-most sums containing the Fourier coefficients, we use \eqref{l11}, \eqref{l12} and Proposition \ref{2.8} to obtain
  \begin{align*}
    \sum_{n\in\mathbb{Z}}c_{\frac{q_1d_2n}{d}}e\left(\frac{-a_2q_1d_2n}{q_2^{\prime}d}\right)&=\sum_{n\in\mathbb{Z}}\frac{1}{\mathcal{N}_{Q,P}}\widehat{H}\left(\frac{q_1d_2n}{d\mathcal{N}_{Q,P}} \right)e\left(-\frac{a_2q_1d_2n}{q_2^{\prime}d}\right)=\sum_{n\in\mathbb{Z}}\widehat{H}_{\frac{q_1d_2}{d}}\left(n \right)e\left(\frac{-a_2q_1d_2n}{q_2^{\prime}d}\right)\\&=\sum_{n\in\mathbb{Z}}H_{\frac{q_1d_2}{d}}\left(n-\frac{a_2q_1d_2}{q_2^{\prime}d} \right)=\sum_{n\in\mathbb{Z}}\frac{d}{q_1d_2}H\left(\frac{d\mathcal{N}_{Q,P}}{q_1d_2}\left(n-\frac{a_2q_1d_2}{q_2^{\prime}d} \right)\right).\numberthis\label{cd}
  \end{align*} 
  Similarly, we have
  \[\sum_{n\in\mathbb{Z}}c_{\frac{q_2d_4n}{D}}e\left(\frac{a_1q_2d_4n}{q_1^{\prime}D}\right)=\sum_{n\in\mathbb{Z}}\frac{D}{q_2d_4}H\left(\frac{D\mathcal{N}_{Q,P}}{q_2d_4}\left(n+\frac{a_1q_2d_4}{q_1^{\prime}D} \right)\right) \numberthis\label{cD}\]
  and
  \[\sum_{r\in\mathbb{Z}}c_re\left(\frac{a_1r}{q_1^{\prime}}-\frac{a_2r}{q_2^{\prime}}\right)=\sum_{r\in\mathbb{Z}}H\left(\mathcal{N}_{Q,P}\left(r+\frac{a_1}{q_1^{\prime}}-\frac{a_2}{q_2^{\prime}}\right) \right). \numberthis\label{cq}\]
   % \[\sum_{n\in\mathbb{Z}}c_{qn}=\sum_{n\in\mathbb{Z}}\frac{1}{q}H\left(\frac{n\mathcal{N}_{Q,P}}{q} \right),\hspace{5mm}  \sum_{n\in\mathbb{Z}}c_{n}=\sum_{n\in\mathbb{Z}}H\left(n\mathcal{N}_{Q,P} \right). \]
    Note that $q_1\leq \frac{dQ}{d_1d_2},\ q_2\leq \frac{DQ}{d_3d_4}$ and $\mathcal{N}_{Q,P}\sim cQ^2$. Given that Supp $H\subset(0,\Lambda)$ such that $Q>\Lambda$, for sufficiently large $Q$, we have  
    $H\left(\frac{d\mathcal{N}_{Q,P}}{q_1d_2}\left(n-\frac{a_2q_1d_2}{dq_2^{\prime}} \right)\right)=0$, $H\left(\frac{D\mathcal{N}_{Q,P}}{q_2d_4}\left(n+\frac{a_1q_2d_4}{Dq_1^{\prime}} \right)\right)=0$ and $H\left(\mathcal{N}_{Q,P}\left(r+\frac{a_1}{q_1^{\prime}}-\frac{a_2}{q_2^{\prime}}\right) \right)=0$. This leads to the vanishing of the sums in \eqref{cd}, \eqref{cD}, and \eqref{cq} reducing \eqref{SQP} to the following identity:
    \iffalse
     \[\sum_{n\in\mathbb{Z}}c_{\frac{q_1d_2n}{\gcd(d_1,d_2)}}e\left(-\frac{a_2q_1d_2n}{q_2^{\prime}\gcd(d_1,d_2)}\right)=0,\ \ \sum_{n\in\mathbb{Z}}c_{\frac{q_2d_4n}{\gcd(d_3,d_4)}}e\left(\frac{a_1q_2d_4n}{q_1^{\prime}\gcd(d_3,d_4)}\right)=0,\ \text{and}\ \sum_{r\in\mathbb{Z}}c_re\left(\frac{a_1r}{q_1^{\prime}}-\frac{a_2r}{q_2^{\prime}}\right)=0 .\]
     \fi
\begin{align*}
    S_{Q,P}=&\sum_{d_1,d_3\leq Q}\mu(d_1)\mu(d_3)\sum_{d_2, d_4\leq Q}\frac{\mu(d_2)\mu(d_4)d_2d_4}{\phi(d_2)\phi(d_4)dD}\sum_{\substack{q_1\leq \frac{Qd}{d_1d_2}\\q_2\leq \frac{DQ}{d_3d_4}  }}q_1q_2\sum_{n\in\mathbb{Z}}\frac{1}{\left[\frac{q_1d_2}{d},\frac{q_2d_4}{D}\right]}H\left(\frac{n\mathcal{N}_{Q,P}}{\left[\frac{q_1d_2}{d},\frac{q_2d_4}{D}\right]} \right)\\
    =&\sum_{d_1,d_2,d_3,d_4\leq Q}\frac{\mu(d_1)\mu(d_2)\mu(d_3)\mu(d_4)}{\phi(d_2)\phi(d_4)}\sum_{\substack{q_1\leq \frac{dQ}{d_1d_2}\\q_2\leq \frac{DQ}{d_3d_4}  }}\gcd\left(\frac{q_1d_2}{d},\frac{q_2d_4}{D}\right)\sum_{n\in\mathbb{Z}}H\left(\frac{n\mathcal{N}_{Q,P}}{\left[\frac{q_1d_2}{d},\frac{q_2d_4}{D}\right]} \right).
\end{align*}
Denote $\gcd\left(\frac{q_1d_2}{d},\frac{q_2d_4}{D}\right)=\delta$. Note that $\gcd\left(\frac{q_1d_2}{d},\frac{q_2d_4}{D}\right)=\delta$ if and only if $\gcd\left(\frac{q_1d_2}{d\delta},\frac{q_2d_4}{D\delta}\right)=1$. Furthermore, $\delta|\frac{q_1d_2}{d}$ if and only if $\frac{\delta}{G_1}|q_1$ and $\delta|\frac{q_2d_4}{D}$ if and only if $\frac{\delta}{G_2}|q_2$, where $G_1:=\gcd\left(\delta,\frac{d_2}{d} \right)$ and $G_2:=\gcd\left(\delta,\frac{d_4}{D} \right)$. That is, we have $q_1=\frac{q_1^{\prime}\delta}{G_1}$ and $q_2=\frac{q_2^{\prime}\delta}{G_2}$ for some $q_1^{\prime}, q_2^{\prime}\in\mathbb{Z}$. With this reduction, we have
\begin{align*}
  S_{Q,P}=&\sum_{\substack{d_1,d_2,d_3,d_4\leq Q \\ \delta\leq \frac{Q}{\max(d_1,d_3)}}}\frac{\delta\mu(d_1)\mu(d_2)\mu(d_3)\mu(d_4)}{\phi(d_2)\phi(d_4)}\sum_{\substack{n\in\mathbb{Z} \\ (q_1^{\prime}, q_2^{\prime})\in\mathfrak{S}}} H\left(\frac{ndDG_1G_2\mathcal{N}_{Q,P}}{q_1^{\prime}q_2^{\prime}\delta d_2d_4} \right),\numberthis\label{l2}
\end{align*}
where $\mathfrak{S}:=\left\{q_1^{\prime}\leq \frac{QdG_1}{\delta d_1d_2}, q_2^{\prime}\leq \frac{DQG_2}{\delta d_3d_4}: \gcd\left(\frac{q_1^{\prime}d_2}{dG_1},\frac{q_2^{\prime}d_4}{DG_2} \right)=1\right\}.$
We use the fact that Supp $H\subset(0,\Lambda)$ and \eqref{N_Q}. For a non-zero contribution from $H$, one must have
 $0<\frac{ndDG_1G_2\mathcal{N}_{Q,P}}{q_1^{\prime}q_2^{\prime}\delta d_2d_4}<\Lambda$
 which implies
 $\delta nd_1d_3<\frac{2\Lambda}{\beta_{P}}=:\mathscr{C}_{\Lambda}$, where $\beta_P=\prod_p \left(1-\frac{f_{P}(p)}{p^2}\right)$.
 %where \[\beta_{P}=\prod_p \left(1-\frac{f_{P}(p)}{p^2}\right).\]
 By utilizing the aforementioned inequality and taking into account Corollary \ref{cor3.2}, the sum in \eqref{l2} becomes
 \begin{align*}
     S_{Q,P}=&\sum_{\delta nd_1d_3<\frac{2\Lambda}{\beta_P}}\delta\mu(d_1)\mu(d_3)\sum_{\substack{d_2,d_4\leq Q\\\gcd\left(\frac{d_2}{dG_1},\frac{d_4}{DG_2}\right)=1}}\frac{\mu(d_2)\mu(d_4)}{\phi(d_2)\phi(d_4)}\sum_{(q_1^{\prime}, q_2^{\prime})\in\mathfrak{S}_1}H\left(\frac{ndDG_1G_2\beta_PQ^2}{2q_1^{\prime}q_2^{\prime}\delta d_2d_4} \right)
     +\BigO{Q^{\frac{3}{2}+\epsilon}}. \numberthis\label{l3}
 \end{align*}
 where $\mathfrak{S}_1:=\left\{q_1^{\prime}\leq \frac{dQG_1}{\delta d_1d_2}, q_2^{\prime}\leq \frac{DQG_2}{\delta d_3d_4}:\gcd\left(q_1^{\prime},\frac{d_4}{DG_2} \right)=\gcd(q_1^{\prime},q_2^{\prime})=\gcd\left(q_2^{\prime},\frac{d_2}{dG_1} \right)=1\right\}.$
 We next employ Lemma \ref{lem2} to estimate the innermost sum on the right-hand side of \eqref{l3}. Denoting this sum by $S^{(1)}$, we have
 \begin{align*}
     S^{(1)}=&\frac{1}{\zeta(2)}\prod_{p|\frac{d_2d_4}{dDG_1G_2}}\left(1-\frac{1}{p^2}\right)^{-1}\prod_{p|\frac{d_2}{dG_1}}\left(1-\frac{1}{p}\right)\prod_{p|\frac{d_4}{DG_2}}\left(1-\frac{1}{p}\right)\\ &\times\int_0^{\frac{DQG_2}{\delta d_3d_4}}\int_0^{\frac{dQG_1}{\delta d_1d_2}} H\left(\frac{ndDG_1G_2\beta_PQ^2}{2xy\delta d_2d_4} \right)dxdy+\BigO{(\tau(d_2)+\tau(d_4))Q\log^2Q}.\numberthis\label{l20}
 \end{align*}
We denote the double integral in the above estimate by $\mathcal{I}$. By suitable change of variables, we obtain
 \begin{align*}
   \mathcal{I}=&\frac{Q^2}{d_2d_4}dDG_1G_2 \int_0^{\frac{1}{\delta d_3}} \int_0^{\frac{1}{\delta d_1}}H\left(\frac{n\beta_P}{2\delta xy} \right)dxdy.
 \end{align*}
 We further put $\frac{n\beta_P}{2\delta xy}=\lambda$ to obtain
 \begin{align*}
   \mathcal{I}&=\frac{Q^2n\beta_P}{2\delta d_2d_4}dDG_1G_2  \int_{\frac{n\delta d_1d_3\beta_P}{2}}^{\Lambda}\int_{\frac{nd_3\beta_P}{2\lambda}}^{\frac{1}{\delta d_1}}\frac{H(\lambda)}{\lambda^2x}dxd\lambda \\&
   =\frac{Q^2n\beta_P}{2\delta d_2d_4}dDG_1G_2\int_{\frac{n\delta d_1d_3\beta_P}{2}}^{\Lambda}\frac{H(\lambda)}{\lambda^2}\log\left(\frac{2\lambda}{n\delta d_1d_3\beta_P} \right)d\lambda. 
 \end{align*}
The above estimate in conjunction with \eqref{l3} and \eqref{l20} gives
 \begin{align*}
   S_{Q,P}=&\frac{Q^2\beta_P}{2\zeta(2)}\sum_{\substack{d_2,d_4\leq Q}}\frac{\mu(d_2)\mu(d_4)}{d_2d_4\phi(d_2)\phi(d_4)}\sum_{\substack{\delta, n,d_1,d_3\geq 1\\\delta nd_1d_3<\frac{2\Lambda}{\beta_P}\\\gcd\left(\frac{d_2}{dG_1},\frac{d_4}{DG_2}\right)=1}}n\mu(d_1)\mu(d_3)
    dDG_1G_2\prod_{p|\frac{d_2}{dG_1}}\left(1-\frac{1}{p}\right)\\&\times\prod_{p|\frac{d_4}{DG_2}}\left(1-\frac{1}{p}\right)\prod_{p|\frac{d_2d_4}{dDG_1G_2}}\left(1-\frac{1}{p^2}\right)^{-1} \int_{\frac{n\delta d_1d_3\beta_P}{2}}^{\Lambda}\frac{H(\lambda)}{\lambda^2}\log\left(\frac{2\lambda}{n\delta d_1d_3\beta_P} \right)d\lambda+\BigO{Q^{\frac{3}{2}+\epsilon}}\\
   =&\frac{Q^2\beta_P}{2\zeta(2)}\sum_{d_2,d_4=1}^{\infty}\frac{\mu(d_2)\mu(d_4)}{d_2d_4\phi(d_2)\phi(d_4)}\int_0^{\Lambda}\frac{H(\lambda)}{\lambda^2}\sum_{1\leq m<\frac{2\lambda}{\beta_P}}\mathfrak{h}(m)\log\left(\frac{2\lambda}{m\beta_P}\right)d\lambda+\BigO{Q^{\frac{3}{2}+\epsilon}}\\
   =&\frac{Q^2\beta_P}{2}\int_0^{\Lambda}\mathfrak{g}_2(\lambda)H(\lambda)d\lambda+\BigO{Q^{\frac{3}{2}+\epsilon}},
 \end{align*}
 where $\mathfrak{h}(m)$ and $\mathfrak{g}_2(\lambda)$ are as defined in the statement of Theorem \ref{P(x)=x(x+1)}. The above estimate with \eqref{N_Q} yields
 \[\frac{S_{Q,P}}{\mathcal{N}_{Q,P}}=\int_0^{\Lambda}H(\lambda)\mathfrak{g}_2(\lambda)d\lambda+\BigO{Q^{-\frac{1}{2}+\epsilon}}. \]
  Using an appropriate approximation argument, we obtain %to approximate the characteristic function of the interval $(0,\Lambda)$ from below and from above by the smooth functions with compact support in $(0,\Lambda)$. Therefore, we have
 \[\lim_{Q\to\infty}\mathcal{S}_{\mathcal{F}_{Q,P}}(\Lambda)=\int_0^{\Lambda}\mathfrak{g}_2(\lambda)d\lambda. \]
 This completes the proof of Theorem \ref{P(x)=x(x+1)}.
 Using similar arguments as in the above proof, one can also establish the pair correlation function for the sequence $\left(\mathcal{F}_{Q,P}\right)_{Q}$ for polynomials $P(x)=x(x+c)$ with non-zero discriminant.
    \section{Polynomial Farey fractions with prime denominators}
\subsection{Proof of Theorem \ref{thm3}}
In order to prove Theorem \ref{thm3}, we intent to estimate, for any positive real number $\Lambda$, subset of primes $\mathcal{B}_Q$, and polynomial $P(x)=c_{\nu}x^{\nu}+c_{\nu-1}x^{\nu-1}+\cdots+c_1x$, the quantity
\[\mathcal{S}_{\mathscr{M}_{\mathcal{B}_Q,P}}(\Lambda)=\frac{1}{\mathscr{N}_{\mathcal{B}_Q,P}}\#\{(\gamma_1,\gamma_2)\in \mathscr{M}_{\mathcal{B}_Q,P}^2: \gamma_1\ne\gamma_2, \gamma_1-\gamma_2\in\frac{1}{\mathscr{N}_{\mathcal{B}_Q,P}}(0,\Lambda)+\mathbb{Z}\},\numberthis\label{P1}\]
as $Q\to \infty.$
Let $H$ be any continuously differentiable function with Supp\ $H\subset(0,\Lambda)$. Define $f(y)=\sum_{n\in\mathbb{Z}}H(\mathscr{N}_{\mathcal{B}_Q,P}(y+n)),\ y\in\mathbb{R},$ and $\mathcal{S}=\sum_{\substack{\gamma_1,\gamma_2\in \mathscr{M}_{\mathcal{B}_Q,P}\\\gamma_1\ne\gamma_2 }}f(\gamma_1-\gamma_2)$. Thus, we change the problem of estimating \eqref{P1} into the exponential sum over $\mathscr{M}_{\mathcal{B}_Q,P}$. We obtain
\[\mathcal{S}=\sum_{r\in \mathbb{Z}}c_r\left|\sum_{\gamma\in \mathscr{M}_{\mathcal{B}_Q,P}}e(r\gamma)\right|^2.\numberthis\label{70}\]
Next, we estimate the exponential sum over polynomial Farey fractions with denominators in $\mathcal{B}_Q$. We have
\begin{align*}
\sum_{\gamma\in\mathscr{M}_{\mathcal{B}_Q,P}}e(r\gamma)&=\sum_{p\in\mathcal{B}_Q }\sum_{\substack{a\leq p\\\gcd(P(a),p)=1}}e\left(\frac{ar}{p}\right)
        =\sum_{\substack{p\in\mathcal{B}_Q}}\sum_{a\leq p}e\left(\frac{ar}{p}\right)\sum_{\substack{d|P(a)\\ d|p}}\mu(d)\\
        &=\sum_{\substack{p\in\mathcal{B}_Q}}\sum_{a\leq p}e\left(\frac{ar}{p}\right)-\sum_{\substack{p\in\mathcal{B}_Q}}\sum_{\substack{a\leq p\\ P(a)\equiv 0\pmod{p}}}e\left(\frac{ar}{p}\right)\\
        &=\sum_{\substack{p\in\mathcal{B}_Q\\ p|r}}p-\sum_{\substack{p\in\mathcal{B}_Q}}\sum_{\substack{a\leq p\\ P(a)\equiv 0\pmod{p}}}e\left(\frac{ar}{p}\right).
        \numberthis\label{71}
    \end{align*}
Using \eqref{70} and \eqref{71}, we obtain 
\begin{align*}
%&=\sum_{r\in\mathbb{Z}}c_r\left(\sum_{\substack{p\in\mathcal{B}_Q\\ p|r}}p-\sum_{\substack{p\in\mathcal{B}_Q}}\sum_{\substack{a\leq p\\ P(a)\equiv 0\pmod{p}}}e\left(\frac{ar}{p}\right) \right)^2\\
\mathcal{S}=&\sum_{r\in\mathbb{Z}}c_r\sum_{\substack{p_1\in\mathcal{B}_Q\\  p_1|r}}p_1\sum_{\substack{p_2\in\mathcal{B}_Q\\ p_2|r}}p_2-\sum_{r\in\mathbb{Z}}c_r\sum_{\substack{p_1\in\mathcal{B}_Q\\ p_1|r}}p_1\sum_{\substack{p_2\in\mathcal{B}_Q}}\sum_{\substack{a\leq p_2\\ P(a)\equiv 0\pmod{p_2}}}e\left(-\frac{ar}{p_2}\right)\\ &-\sum_{r\in\mathbb{Z}}c_r\sum_{\substack{p_1\in\mathcal{B}_Q\\ p_1|r}}p_1\sum_{\substack{p_2\in\mathcal{B}_Q}}\sum_{\substack{a\leq p_2\\ P(a)\equiv 0\pmod{p_2}}}e\left(\frac{ar}{p_2}\right)+\sum_{r\in\mathbb{Z}}c_r\left|\sum_{\substack{p\in\mathcal{B}_Q}}\sum_{\substack{a\leq p\\ P(a)\equiv 0\pmod{p}}}e\left(\frac{ar}{p}\right)\right|^2\\
    =&\sum_{\substack{p_1, p_2\in\mathcal{B}_Q}}p_1p_2\sum_{r\in\mathbb{Z}}c_{[p_1,p_2]r}-\sum_{\substack{p_1\in\mathcal{B}_Q}}p_1\sum_{\substack{p_2\in\mathcal{B}_Q}}\sum_{\substack{a\leq p_2\\ P(a)\equiv 0\pmod{p_2}}}\sum_{r\in\mathbb{Z}}c_{p_1r}e\left(-\frac{ap_1r}{p_2}\right)\\
    &-\sum_{\substack{p_1\in\mathcal{B}_Q}}p_1\sum_{\substack{p_2\in\mathcal{B}_Q}}\sum_{\substack{a\leq p_2\\ P(a)\equiv 0\pmod{p_2}}}\sum_{r\in\mathbb{Z}}c_{p_1r}e\left(\frac{ap_1r}{p_2}\right)+\sum_{\substack{p_1,p_2\in\mathcal{B}_Q}}\sum_{\substack{a_1\leq p_1\\a_2\leq p_2\\ P(a_1)\equiv 0\pmod{p_1}\\P(a_2)\equiv 0\pmod{p_2}}}\sum_{r\in\mathbb{Z}}c_re\left(\frac{a_1r}{p_1}-\frac{a_2r}{p_2}\right).\numberthis\label{prime1}
%\BigO{\#\mathcal{B}_Q\sum_{\substack{p\in\mathcal{B}_Q}}p\sum_{r\in\mathbb{Z}}c_{pr}}+\BigO{(\#\mathcal{B}_Q)^2\sum_{r\in\mathbb{Z}}c_{r}}
    %&=\sum_{\substack{p_1, p_2\leq Q}}p_1p_2\sum_{r\in\mathbb{Z}}c_{[p_1,p_2]r}+\BigOalpha{\frac{Q}{\log Q}\sum_{\substack{p\leq Q}}p\sum_{r\in\mathbb{Z}}c_{pr}}+\BigOalpha{\frac{Q^2}{(\log Q)^2}\sum_{r\in\mathbb{Z}}c_r}
\end{align*}
%\begin{align*}
%S&=\sum_{r\in\mathbb{Z}}c_r\left(\sum_{\substack{p_1\leq Q\\p_1|r}}p_1+\BigO{\frac{Q}{\log Q}} \right)\left(\sum_{\substack{p_2\leq Q\\p_2|r}}p_2+\BigO{\frac{Q}{\log Q}} \right)\\
    %&=\sum_{r\in\mathbb{Z}}c_r\sum_{\substack{p_1\leq Q\\p_1|r}}p_1\sum_{\substack{p_2\leq Q\\p_2|r}}p_2+\BigOalpha{\frac{Q}{\log Q}\sum_{\substack{p\leq Q}}p\sum_{n\in\mathbb{Z}}c_{pn}}+\BigOalpha{\frac{Q^2}{(\log Q)^2}\sum_{r\in\mathbb{Z}}c_r}\\
    %&=\sum_{\substack{p_1, p_2\leq Q}}p_1p_2\sum_{r\in\mathbb{Z}}c_{[p_1,p_2]r}+\BigOalpha{\frac{Q}{\log Q}\sum_{\substack{p\leq Q}}p\sum_{n\in\mathbb{Z}}c_{pn}}+\BigOalpha{\frac{Q^2}{(\log Q)^2}\sum_{r\in\mathbb{Z}}c_r}.\\
%\end{align*}
 Since $c_r$ is a Fourier coefficient, we estimate the sum of Fourier coefficients in the first term on the right-hand side of the above identity using Proposition \ref{Poisson} as in \eqref{l14}. Therefore, 
\[\sum_{r\in\mathbb{Z}}c_{[p_1,p_2]r}
    =\sum_{r\in\mathbb{Z}}\frac{1}{[p_1,p_2]}H\left(\frac{r\mathscr{N}_{\mathcal{B}_Q,P}}{[p_1,p_2]} \right), \]
  The other sums of Fourier coefficients are estimated using Proposition \ref{2.8} as demonstrated in \eqref{l15} 
    \[\sum_{r\in\mathbb{Z}}c_{p_1r}e\left(\pm\frac{ap_1r}{p_2}\right)=\sum_{r\in\mathbb{Z}}\frac{1}{p_1}H\left(\frac{\mathscr{N}_{\mathcal{B}_Q,P}}{p_1}\left(n\pm\frac{ap_1}{p_2}\right) \right)  \]
    and
    \[\sum_{r\in\mathbb{Z}}c_{r}e\left(\frac{a_1r}{p_1}-\frac{a_2r}{p_2}\right)=\sum_{r\in\mathbb{Z}}H\left(\mathscr{N}_{\mathcal{B}_Q,P}\left(r+\frac{a_1}{p_1}-\frac{a_2}{p_2}\right) \right). \]
 Inserting the above estimates into \eqref{prime1} gives
 \begin{align*}
 \mathcal{S}=&\sum_{\substack{p_1, p_2\in\mathcal{B}_Q}}\frac{p_1p_2}{[p_1,p_2]}\sum_{r\in\mathbb{Z}}H\left(\frac{r\mathscr{N}_{\mathcal{B}_Q,P}}{[p_1,p_2]} \right)-\sum_{\substack{p_1\in\mathcal{B}_Q}}\sum_{\substack{p_2\in\mathcal{B}_Q}}\sum_{\substack{a\leq p_2\\ P(a)\equiv 0\pmod{p_2}}}\sum_{r\in\mathbb{Z}}\left(H\left(\frac{\mathscr{N}_{\mathcal{B}_Q,P}}{p_1}\left(n-\frac{ap_1}{p_2}\right) \right) \right. \\ &+  \left.H\left(\frac{\mathscr{N}_{\mathcal{B}_Q,P}}{p_1}\left(n+\frac{ap_1}{p_2}\right) \right)\right)+\sum_{\substack{p_1,p_2\in\mathcal{B}_Q}}\sum_{\substack{a_1\leq p_1\\a_2\leq p_2\\ P(a_1)\equiv 0\pmod{p_1}\\P(a_2)\equiv 0\pmod{p_2}}}\sum_{r\in\mathbb{Z}}H\left(\mathscr{N}_{\mathcal{B}_Q,P}\left(r+\frac{a_1}{p_1}-\frac{a_2}{p_2}\right) \right). \numberthis\label{prime2} 
 \end{align*}
%\BigO{\#\mathcal{B}_Q\sum_{\substack{p\in\mathcal{B}_Q}}\sum_{r\in\mathbb{Z}}H\left(\frac{r\mathscr{N}_{\mathcal{B}_Q,P}}{p} \right)}\\&+\BigO{(\#\mathcal{B}_Q)^2\sum_{r\in\mathbb{Z}}H\left(r\mathscr{N}_{\mathcal{B}_Q,P} \right)}.
    Since Supp $H\subset(0,\Lambda)$ for some $\Lambda>0$. Thus, for sufficiently large $Q$, we have  
    $H\left(\mathscr{N}_{\mathcal{B}_Q,P}\left(r+\frac{a_1}{p_1}-\frac{a_2}{p_2}\right) \right)=0$. 
     %this yields\[\sum_{r\in\mathbb{Z}}H\left(r\mathscr{N}_{\mathcal{B}_Q,P} \right)=0.\]
 If $\sum_{p\in\mathcal{B}_Q}p^2=o\left((\mathscr{N}_{\mathcal{B}_Q,P})^2\right)$, then $p^2=o\left((\mathscr{N}_{\mathcal{B}_Q,P})^2\right)$ for all $p\in\mathcal{B}_Q$, this yields $H\left(\frac{\mathscr{N}_{\mathcal{B}_Q,P}}{p_1}\left(n\pm\frac{ap_1}{p_2}\right) \right)=0$ for large $Q$. Hence
    \[\mathcal{S}=\sum_{\substack{p_1, p_2\in\mathcal{B}_Q }}\frac{p_1p_2}{[p_1,p_2]}\sum_{r\in\mathbb{Z}}H\left(\frac{r\mathscr{N}_{\mathcal{B}_Q,P}}{[p_1,p_2]} \right)=\sum_{\substack{p_1, p_2\in\mathcal{B}_Q\\ p_1\ne p_2\\p_1p_2>\mathscr{N}_{\mathcal{B}_Q,P}/\Lambda}}\sum_{r\in\mathbb{Z}}H\left(\frac{r\mathscr{N}_{\mathcal{B}_Q,P}}{p_1p_2} \right). \]
 In the last step, we used the fact that Supp $H\subset(0,\Lambda)$. We employ \cite[Lemma 3]{Xiong} to estimate the inner sum and use the fact that derivative of a smooth, compactly supported function is compactly supported to obtain 
\begin{align*}
    \mathcal{S}&=\frac{\int_{\mathbb{R}}H(x)dx}{\mathscr{N}_{\mathcal{B}_Q,P}}\sum_{\substack{p_1, p_2\in\mathcal{B}_Q\\ p_1\ne p_2}}p_1p_2+\BigOH{\sum_{p_1, p_2\in\mathcal{B}_Q}1}=\frac{\int_{\mathbb{R}}H(x)dx}{\mathscr{N}_{\mathcal{B}_Q,P}}\left(\sum_{\substack{p_1, p_2\in\mathcal{B}_Q}}p_1p_2-\sum_{p\in\mathcal{B}_Q}p^2 \right)+\BigOH{(\#\mathcal{B}_Q)^2}\\
&=\mathscr{N}_{\mathcal{B}_Q,P}(1+o(1))\int_{\mathbb{R}}H(x)dx+\BigOH{(\#\mathcal{B}_Q)^2}.
\end{align*}
Therefore,
\[\frac{\mathcal{S}}{\mathscr{N}_{\mathcal{B}_Q, P}}=(1+o(1))\int_{\mathbb{R}}H(x)dx+\BigOH{\frac{(\#\mathcal{B}_Q)^2}{\mathscr{N}_{\mathcal{B}_Q, P}}}. \numberthis\label{S/N}\]
Suppose $\#\mathcal{B}_Q=m$. Then, by Proposition \ref{M_BQ}, we have
\begin{align*}
    \mathscr{N}_{\mathcal{B}_Q, P}\gg \sum_{k=1}^mp_k\sim \sum_{k=1}^mk\log k\gg m^2\log m,
\end{align*}
where $p_k$ is the $k$th prime. Thus ${(\#\mathcal{B}_Q)^2}=o{(\mathscr{N}_{\mathcal{B}_Q, P})}$. By \eqref{S/N}, we obtain
\[\lim_{Q\to\infty}\frac{\mathcal{S}}{\mathscr{N}_{\mathcal{B}_Q, P}}=\int_{\mathbb{R}}H(x)dx. \]
Using the standard approximation argument, we approximate the characteristic function of the interval $(0,\Lambda)$ from above and below by  smooth functions with compact support in $(0,\Lambda)$, we deduce that the pair correlation function of the sequence $(\mathscr{M}_{\mathcal{B}_Q, P})_{Q\geq 1}$ is constant equal to 1 as $Q\to\infty$.
 
 For the other direction, suppose that the pair correlation of $\mathscr{M}_{\mathcal{B}_Q, P}$ is Poissonian and on the contrary $\sum_{p\in\mathcal{B}_Q}p^2$ is not $o\left((\#\mathscr{M}_{\mathcal{B}_Q,P})^2\right)$. In \eqref{prime2}, the characteristic function of the interval $(0,\Lambda)$ is approximated from both above and below by the smooth functions with compact support in $(0,\Lambda)$, we obtain
 \begin{align*}
    \mathscr{N}_{\mathcal{B}_Q,P}\mathcal{S}_{\mathscr{M}_{\mathcal{B}_Q,P}}(\Lambda)=&\sum_{\substack{p_1, p_2\in\mathcal{B}_Q\\ p_1\ne p_2}}\left[\frac{p_1p_2\Lambda}{\mathscr{N}_{\mathcal{B}_Q,P}} \right]+\sum_{\substack{p\in\mathcal{B}_Q}}p\left[\frac{p\Lambda}{\mathscr{N}_{\mathcal{B}_Q,P}} \right]-\sum_{\substack{p_1,p_2\in\mathcal{B}_Q}}\sum_{\substack{a\leq p_2\\P(a)\equiv 0\pmod{p_2}}}\left[\frac{p_1\Lambda}{\mathscr{N}_{\mathcal{B}_Q,P}}+\frac{ap_1}{p_2} \right] \\ &-\sum_{\substack{p_1,p_2\in\mathcal{B}_Q}}\sum_{\substack{a\leq p_2\\P(a)\equiv 0\pmod{p_2}}}\left[\frac{p_1\Lambda}{\mathscr{N}_{\mathcal{B}_Q,P}}-\frac{ap_1}{p_2} \right]\\
    =&\frac{\Lambda}{\mathscr{N}_{\mathcal{B}_Q,P}}\sum_{\substack{p_1, p_2\in\mathcal{B}_Q}}p_1p_2-\sum_{\substack{p\in\mathcal{B}_Q}}p\left\{\frac{p\Lambda}{\mathscr{N}_{\mathcal{B}_Q,P}} \right\} -2{\sum_{p_1,p_2\in\mathcal{B}_Q}\sum_{\substack{a\leq p_2\\P(a)\equiv 0\pmod{p_2}}}\frac{p_1\Lambda}{\mathscr{N}_{\mathcal{B}_Q,P}} }\\ & +\BigO{(\#\mathcal{B}_Q)^2},
 \end{align*}
where $\{x \}$ is the fractional part of $x$. If $p=o(\mathscr{N}_{\mathcal{B}_Q,P})$ for every $p\in\mathcal{B}_Q$, then $\lim_{Q\to\infty}\frac{1}{\mathscr{N}_{\mathcal{B}_Q,P}}\sum_{\substack{p\in\mathcal{B}_Q}}p\left\{\frac{p\Lambda}{\mathscr{N}_{\mathcal{B}_Q,P}} \right\}$ $ = \lim_{Q\to\infty}\frac{\Lambda}{(\mathscr{N}_{\mathcal{B}_Q,P})^2}\sum_{\substack{p\in\mathcal{B}_Q}}p^2\ne 0$. If there exists a $p\in\mathcal{B}_Q$ such that $p$ is not $o(\mathscr{N}_{\mathcal{B}_Q,P})$, then $\lim_{Q\to\infty}\frac{1}{\mathscr{N}_{\mathcal{B}_Q,P}} $ $ \times\sum_{\substack{p\in\mathcal{B}_Q}}p\left\{\frac{p\Lambda}{\mathscr{N}_{\mathcal{B}_Q,P}} \right\}\geq\lim_{Q\to\infty}\frac{1}{\mathscr{N}_{\mathcal{B}_Q,P}}p\left\{\frac{p\Lambda}{\mathscr{N}_{\mathcal{B}_Q,P}} \right\}\ne 0$. Therefore,
$\lim_{Q\to \infty}{\mathcal{S}_{\mathscr{M}_{\mathcal{B}_Q,P}}(\Lambda)}\ne \Lambda$, thus the pair correlation is not Poissonian, which is a contradiction. This completes the proof of Theorem \ref{thm3}.

\subsection{Proof of Corollary \ref{cor2}}
Corollary \ref{cor2} concerns the distribution of polynomial Farey fractions with denominators in the set of Piatetski-Shapiro primes, the set of Chen primes and the set of Hardy-Littlewood prime k-tuples. We use the following counting estimates to verify the condition 
\[\sum_{p\in\mathcal{B}_Q}p^2=o\left((\#\mathscr{M}_{\mathcal{B}_Q,P})^2\right), \numberthis\label{PS1}\]
in Theorem \ref{thm3} for these sets. The following theorems are useful to establish \eqref{PS1}. 
%\subsubsection{Piatetski-Shapiro primes}
\begin{thm}\cite{MR1820915}\label{PS PNT}
Let $\pi_c(x)$ denote the number of Piatetski-Shapiro primes $p\leq x$. Then for fixed $c\in (1, 1.16)$, we have
\[\pi_c(x)\sim \frac{x^{1/c}}{\log x}\ \text{as}\ x\to \infty. \]
\end{thm}
%\subsubsection{Chen primes}
\begin{thm}\cite[Theorem II]{Chen}\label{Chen PNT}
    There exist infinitely many primes $p$ such that $p+h$ is a product of at most $2$ primes, $h$ being any even integer, and
    \[\#\{p\leq x\ |\ p+h\ \text{is a product of at most two primes} \}\geq \frac{0.67xC_h}{(\log x)^2}, \]
    where $C_h$ is a constant depending on $h$.
\end{thm}

%In order to prove Corollary \ref{cor2}, we need to prove that the asymptotic formula

%stated in Theorem \ref{thm3} holds true whenever $\mathcal{B}_Q$ is either the set of Piatetski-Shapiro primes, Chen primes, or prime $k$ tuples. 

Case-I: Let $c\in (1, 1.16)$ be a fixed real number. Suppose $\mathcal{B}_Q^{(1)}$ is the set of Piatetski-Shapiro primes less than or equal to $Q$. We use Proposition \ref{Abel} and Theorem \ref{PS PNT} to estimate the sum on the left-hand side of \eqref{PS1}:
\begin{align*}
    \sum_{p\in\mathcal{B}_Q^{(1)}}p^2&=\sum_{\substack{p\leq Q\\ p=\lfloor n^c\rfloor}}p^2=Q^2\sum_{\substack{p\leq Q\\ p=\lfloor n^c\rfloor}}1-\int_2^{Q}2t\left(\sum_{\substack{p\leq t\\ p=\lfloor n^c\rfloor}}1\right) dt\\
    &=Q^2\left(\frac{Q^{1/c}}{\log Q}+\BigO{\frac{Q^{1/c}}{\log^2Q}} \right)-2\int_2^{Q}\left(\frac{t^{1+\frac{1}{c}}}{\log t}+\BigO{\frac{t^{1+\frac{1}{c}}}{\log^2t}} \right) dt 
    =\frac{Q^{2+\frac{1}{c}}}{(2c+1)\log Q}+\BigO{\frac{Q^{2+\frac{1}{c}}}{\log^2Q}}.\numberthis\label{PS2}
\end{align*}
 To estimate the right-hand side of \eqref{PS1}, we use Propositions \ref{M_BQ}, \ref{Abel}, and Theorem \ref{PS PNT}. This yields
\begin{align*}
   \#\mathscr{M}_{\mathcal{B}_Q^{(1)},P}&=\sum_{\substack{p\in \mathcal{B}_Q^{(1)}}}p+\BigO{\#\mathcal{B}_Q^{(1)}}=\sum_{\substack{p\leq Q\\ p=\lfloor n^c\rfloor}}p+\BigO{\#\mathcal{B}_Q^{(1)}}\\ &=Q\left(\frac{Q^{1/c}}{\log Q}+\BigO{\frac{Q^{1/c}}{\log^2Q}} \right)-\int_2^{Q}\left(\frac{t^{\frac{1}{c}}}{\log t}+\BigO{\frac{t^{\frac{1}{c}}}{\log^2t}} \right) dt
=\frac{Q^{1+\frac{1}{c}}}{(c+1)\log Q}+\BigO{\frac{Q^{1+\frac{1}{c}}}{\log^2 Q}}. 
\end{align*}
Thus the above estimate with \eqref{PS2} gives the required result.

Case-II: Suppose $\mathcal{B}_Q^{(2)}$ is the set of Chen primes not exceeding $Q$. To prove the asymptotic formula in \eqref{PS1} for Chen primes, we apply Propositions \ref{M_BQ}, \ref{Abel}, and Theorem \ref{Chen PNT} to obtain
\begin{align*}
   \sum_{p\in\mathcal{B}_Q^{(2)}}p^2&=\sum_{\substack{p\leq Q\\ p\ \text{Chen prime}}}p^2=Q^2\sum_{\substack{p\leq Q\\p\ \text{Chen prime}}}1-\int_2^Q2t\left(\sum_{\substack{p\leq t\\p\ \text{Chen prime}}}1\right)dt\\ &\ll Q^2\cdot \frac{Q}{\log Q}-2\int_2^Q\frac{t^2}{(\log t)^2}dt 
   \ll \frac{Q^3}{\log Q},
\end{align*}
and
\begin{align*}
   \#\mathscr{M}_{\mathcal{B}_Q^{(2)},P}&=\sum_{\substack{p\in \mathcal{B}_Q^{(2)}}}p+\BigO{\#\mathcal{B}_Q^{(2)}}=\sum_{\substack{p\leq Q\\ p\ \text{Chen prime}}}p+\BigO{\#\mathcal{B}_Q^{(2)}}\\
   &=Q\sum_{\substack{p\leq Q\\p\ \text{Chen prime}}}1-\int_2^Q\left(\sum_{\substack{p\leq t\\p\ \text{Chen prime}}}1\right)dt +\BigO{\#\mathcal{B}_Q^{(2)}}
   \gg \frac{Q^2}{\log^2Q}.
\end{align*}
%\[\sum_{p\in\mathcal{B}_Q}p^2\ll \frac{Q^3}{\log Q}\ \text{and}\ |\mathscr{M}_{\mathcal{B}_Q,P}|\gg \frac{Q^2}{\log^2Q}. \]

Case-III: Let $\mathcal{B}_Q^{(3)}$ be a set of prime $k$-tuples that are less than or equal to $Q$. We use Proposition \ref{M_BQ} and \ref{Abel} with the Hardy-Littlewood conjecture to derive the asymptotic formulas
%Assuming the Hardy-Littlewood conjecture and using Proposition \ref{Abel}, Proposition \ref{M_BQ}, we derive the asymptotic formulas
\begin{align*}
  \sum_{p\in\mathcal{B}_Q^{(3)}}p^2&=\sum_{\substack{p\leq Q\\p\ \text{prime $k$-tuple}}} p^2 =Q^2\sum_{\substack{p\leq Q\\p\ \text{prime $k$-tuple}}}1-\int_2^Q2t\left(\sum_{\substack{p\leq t\\p\ \text{prime $k$-tuple}}}1\right)dt\\
  &=(1+o(1))\frac{c_kQ^3}{(\log Q)^k}-2\int_2^Q(1+o(1))\frac{c_kt^2}{(\log Q)^k}dt
  =(1+o(1))\frac{c_kQ^3}{3(\log Q)^k},
\end{align*}
and
\begin{align*}
   \#\mathscr{M}_{\mathcal{B}_Q^{(3)},P}&= \sum_{\substack{p\in \mathcal{B}_Q^{(3)}}}p+\BigO{\#\mathcal{B}_Q^{(3)}}=(1+o(1))\frac{c_kQ^2}{(\log Q)^k}-\int_2^Q(1+o(1))\frac{c_kt}{(\log Q)^k}dt
   =(1+o(1))\frac{c_kQ^2}{2(\log Q)^k},
\end{align*}
%\[|\mathscr{M}_{\mathcal{B}_Q,P}|\sim\frac{c^{\prime}Q^2}{(\log Q)^k}, \]
%and
%\[\sum_{p\in\mathcal{B}_Q}p^2\sim\frac{cQ^3}{(\log Q)^k}, \]
where $c_k$ is a positive constant depending on $k$. Collecting the above estimates establishes the required result in \eqref{PS1}. This completes the proof of Corollary \ref{cor2}.

\subsection{Proof of Corollary \ref{cor3}} Suppose $\mathcal{B}_Q^{(4)}$ is the set of primes that are less than or equal to $Q$ with restricted digits. The following result of Maynard is useful in establishing Corollary \ref{cor3}.
%\subsubsection{Primes with restricted digits}
\begin{thm}\cite[Theorem 1.2]{Maynard}\label{Maynard PNT}
  Let $q$ be sufficiently large, and let $X\geq q$. For any choice $B\subset\{0,\ldots,q-1 \}$ with $\#B=s\leq q^{23/80}$, let 
  \[A=\left\{\sum_{0\leq i\leq k}n_iq^i\ :\ n_i\in\{0,1,\ldots,q-1 \}\setminus B,\ k\geq 0\right\} \]
  be the set of integers less than $X$ with no digit in base $q$ in the set $B$. Then we have
  \[\#\{p\in A \}\asymp \frac{X^{\log(q-s)/\log q}}{\log X}. \]
\end{thm}
 Now, employing Proposition \ref{Abel} and the above theorem, we have
 \begin{align*}
    \sum_{p\in\mathcal{B}_Q^{(4)}}p^2&=\sum_{\substack{p\leq Q\\p\in\mathbf{A}}}p^2=Q^2\sum_{\substack{p\leq Q\\p\in\mathbf{A} }}1-\int_2^Q2t\left(\sum_{\substack{p\leq t\\p\in\mathbf{A}}}1\right)dt\asymp Q^2\cdot\frac{Q^{\frac{\log(q-\#\mathcal{D})}{\log q}}}{\log Q}-2\int_2^Q\frac{t^{1+\frac{\log(q-\#\mathcal{D})}{\log q}}}{\log t}dt 
\asymp\frac{Q^{2+\frac{\log(q-\#\mathcal{D})}{\log q}}}{\log Q}. 
 \end{align*}
 The right-hand side of \eqref{PS1} is estimated using Propositions \ref{M_BQ}, \ref{Abel}, and Theorem \ref{Maynard PNT}
 \begin{align*}
    \#\mathscr{M}_{\mathcal{B}_Q^{(4)},P}&=\sum_{\substack{p\in \mathcal{B}_Q^{(4)}}}p+\BigO{\#\mathcal{B}_Q^{(4)}}=\sum_{\substack{p\leq Q\\p\in\mathbf{A}}}p+\BigO{\#\mathcal{B}_Q^{(4)}}=Q\sum_{\substack{p\leq Q\\p\in\mathbf{A} }}1-\int_2^Q\left(\sum_{\substack{p\leq t\\p\in\mathbf{A}}}1\right)dt \\
    &\asymp Q\cdot\frac{Q^{\frac{\log(q-\#\mathcal{D})}{\log q}}}{\log Q}-\int_2^Q\frac{t^{\frac{\log(q-\#\mathcal{D})}{\log q}}}{\log t}dt 
    \asymp\frac{Q^{1+\frac{\log(q-\#\mathcal{D})}{\log q}}}{\log Q}. 
 \end{align*}
 Therefore, the above two estimates together with \eqref{PS1} give the required result. %when $\mathcal{B}_Q$ is the set of primes with restricted digits.

\section{Races for Farey fractions}
\subsection{Proof of Theorem \ref{thm4}}
We use the approach of Knapowski and Tur\'{a}n \cite{Knapowski} in proving Theorem \ref{thm4}. We begin with the integral
\[\int_1^{\infty}(S(Q;q,l_1)-S(Q;q,l_2)\pm cQ^{\frac{1}{2}-\epsilon})Q^{-s-1}dQ,\ \text{for}\ \epsilon>0.\]
Now, the difference can be estimated using \eqref{I7} as 
\begin{align*}
    S(Q;q,l_1)-S(Q;q,l_2)&=\left(\sum_{\substack{n\leq Q\\n\equiv l_1\pmod{q}}}1-\sum_{\substack{n\leq Q\\n\equiv l_2\pmod{q}}}1\right)\sum_{\substack{a\leq n\\\gcd(P(a),n)=1}}1\\
    &=\frac{1}{\phi(q)}\left(\sum_{\chi\pmod q}(\bar{\chi}(l_1)-\bar{\chi}(l_2))\right)\sum_{n\leq Q}n\chi(n)K(n),
\end{align*}
%\[S(Q;q,l_1)-S(Q;q,l_2)=\left(\sum_{\substack{n\leq Q\\n\equiv l_1\pmod{q}}}1-\sum_{\substack{n\leq Q\\n\equiv l_2\pmod{q}}}1\right)\sum_{\substack{a\leq n\\\gcd(P(a),n)=1}}1. \numberthis\label{S(Q)}\]
%Using \eqref{I7}, the sum in \eqref{S(Q)} can be written as
%\[S(Q;q,l_1)-S(Q;q,l_2)=\frac{1}{\phi(q)}\left(\sum_{\chi\pmod q}(\bar{\chi}(l_1)-\bar{\chi}(l_2))\right)\sum_{n\leq Q}n\chi(n)K(n),\]
where $K(n)=\sum_{d|n}\frac{\mu(d)f_{P}(d)}{d}$.
Therefore,
\begin{align*}
   \int_1^{\infty}(S(Q;q,l_1)-S(Q;q,l_2)\pm cQ^{\frac{1}{2}-\epsilon})Q^{-s-1}dQ=& \frac{1}{\phi(q)}\sum_{\chi\pmod q}(\bar{\chi}(l_1)-\bar{\chi}(l_2))\int_1^{\infty}\frac{\sum_{n\leq Q}n\chi(n)K(n)}{Q^{s+1}}dQ\\
   &\pm \frac{c}{s-\frac{1}{2}+\epsilon}.\numberthis\label{C1}
\end{align*}
 The integral on the right-hand side represents the Dirichlet series $\sum_{n=1}^{\infty}\frac{\chi(n)K(n)}{n^{s-1}}$ as a Mellin transform. Using \eqref{C3}, we have
\begin{align*}
  \int_1^{\infty}\frac{\sum_{n\leq Q}n\chi(n)K(n)}{Q^{s+1}}dQ &= \frac{1}{s}\sum_{n=1}^{\infty}\frac{\chi(n)K(n)}{n^{s-1}}\\&=\frac{L(s-1,\chi)}{s}\prod_{p}\left(1-\frac{\chi(p)f_{P}(p)}{p^s}\right),\ \text{for}\ \sigma>2.\numberthis\label{C2}
\end{align*}
%Note that the arithmetic function $K(n)$ is multiplicative since $\mu$ and $f_{P}$ are multiplicative, so the associated Dirichlet series in \eqref{C2} can be written in terms of Euler product. This yields
This along with \eqref{C1} gives
\begin{align*}
&\int_1^{\infty}(S(Q;q,l_1)-S(Q;q,l_2)\pm cQ^{\frac{1}{2}-\epsilon})Q^{-s-1}dQ= \frac{1}{\phi(q)}\sum_{\chi\pmod q}(\bar{\chi}(l_1)-\bar{\chi}(l_2))\frac{1}{s}L(s-1,\chi)\\ &\times\prod_{p}\left(1-\frac{\chi(p)f_{P}(p)}{p^s}\right)
   \pm \frac{c}{s-\frac{1}{2}+\epsilon}\\&= \frac{1}{\phi(q)}\sum_{\substack{\chi\pmod q\\\chi\ne\chi_0}}(\bar{\chi}(l_1)-\bar{\chi}(l_2))\frac{1}{s}L(s-1,\chi)\prod_{p}\left(1-\frac{\chi(p)f_{P}(p)}{p^s}\right)
   \pm \frac{c}{s-\frac{1}{2}+\epsilon}.\numberthis\label{C4}
\end{align*}
In the last step, we used the fact that $\bar{\chi}_0(l_1)=\bar{\chi}_0(l_2)=1$ for $\gcd(l_1l_2,q)=1$. We will apply Proposition \ref{Landau} with
\[\mathscr{A}(Q)=S(Q;q,l_1)-S(Q;q,l_2)\pm cQ^{\frac{1}{2}-\epsilon}.\]
Thus by \eqref{C4}, we have
\[g(s)=\int_1^{\infty}\mathscr{A}(Q)Q^{-s-1}dQ=\frac{1}{s\phi(q)}\sum_{\substack{\chi \pmod q\\\chi\ne\chi_0}}(\bar{\chi}(l_1)-\bar{\chi}(l_2))L(s-1,\chi) \prod_{p}\left(1-\frac{\chi(p)f_{P}(p)}{p^s}\right)
   \pm \frac{c}{s-\frac{1}{2}+\epsilon}. \]
 Since $L(s-1,\chi)$ is entire for non-principal Dirichlet character $\chi$ (mod $q$) and $\prod_{p}\left(1-\frac{\chi(p)f_{P}(p)}{p^s}\right)$ is absolutely convergent for $\sigma>1$ so, $g(s)$ is analytic in the half plane $\sigma>1$.
Let $\Delta\ne 0$ be discriminant of $P(x)$. The product term in the last equation can be written as
\begin{align*}
     \prod_{p}\left(1-\frac{\chi(p)f_{P}(p)}{p^s}\right)&=\prod_{p|\Delta}\left(1-\frac{\chi(p)f_{P}(p)}{p^s}\right)\prod_{p\nmid\Delta}\left(1-\frac{\chi(p)f_{P}(p)}{p^s}\right)\numberthis\label{C5}
     %&=\prod_{p|\Delta}\left(1-\frac{\chi(p)f_{P}(p)}{p^s}\right)\prod_{p\nmid\Delta}\left(1-\frac{\chi(p)}{p^s}\right)^{f_{P}(p)}\mathscr{P}_1(\Delta),\numberthis\label{C5}
 \end{align*}
  Note that $f_P(p)$ is non-negative integer for all prime $p$. Therefore, by the binomial theorem, we obtain
  \begin{align*}
    \prod_{p\nmid\Delta}\left(1-\frac{\chi(p)}{p^s} \right)^{f_P(p)}&=\prod_{p\nmid\Delta}\left(1-\frac{f_P(p)\chi(p)}{p^s}+\frac{f_P(p)(f_P(p)-1)\chi(p^2)}{2p^{2s}}+\cdots+\frac{(-1)^{f_P(p)}\chi(p^{f_P(p)})}{p^{f_P(p)s}} \right)\\ &= \prod_{p\nmid\Delta}\left(1-\frac{\chi(p)f_P(p)}{p^s} \right)\mathscr{P}_1(\Delta).
  \end{align*}
where \[\mathscr{P}_1(\Delta)=\prod_{p\nmid\Delta}\left(1+\left(1-\frac{\chi(p)f_P(p)}{p^s}\right)^{-1}\left(\frac{\chi(p^2)f_P(p)(f_P(p)-1)}{2p^{2s}}+\cdots+\frac{(-1)^{f_P(p)}\chi(p^{f_P(p)})}{p^{sf_P(p)}} \right)  \right).\]
Inserting the above identity into \eqref{C5} gives
\[\prod_{p}\left(1-\frac{\chi(p)f_{P}(p)}{p^s}\right)=\prod_{p|\Delta}\left(1-\frac{\chi(p)f_{P}(p)}{p^s}\right)\prod_{p\nmid\Delta}\left(1-\frac{\chi(p)}{p^s} \right)^{f_P(p)}\mathscr{P}_1(\Delta)^{-1}\numberthis\label{CC1} \]
Let us assume that $P(x)$ is irreducible. An application of Dedekind's theorem (see Theorem 5.5.1, \cite{Murty}) implies that for primes $p\nmid\Delta$, $f_P(p)$ is the number of prime ideals of a ring of integers, $\mathcal{O}_K$, where $K=\mathbb{Q}[x]/(P(x))$. This yields
 \begin{align*}
     \prod_{p\nmid\Delta}\left(1-\frac{\chi(p)}{p^s}\right)^{f_{P}(p)}&=\prod_{p\nmid\Delta}\prod_{\substack{p\in\mathfrak{P}\subset\mathcal{O}_{K}\\\parallel\mathfrak{P}\parallel=p}}\left(1-\frac{\mathcal{\chi^{\prime}}(\mathfrak{P})}{\parallel\mathfrak{P}\parallel^s}\right)\\
&=\prod_{p\nmid\Delta}\prod_{p\in\mathfrak{P}\subset\mathcal{O}_{K}}\left(1-\frac{\chi^{\prime}(\mathfrak{P})}{\parallel\mathfrak{P}\parallel^s}\right)\prod_{p\nmid\Delta}\prod_{\substack{p\in\mathfrak{P}\subset\mathcal{O}_{K}\\\parallel\mathfrak{P}\parallel\ne p}}\left(1-\frac{\chi^{\prime}(\mathfrak{P})}{\parallel\mathfrak{P}\parallel^s}\right)^{-1}\\
&=\prod_{p}\prod_{p\in\mathfrak{P}\subset\mathcal{O}_{K}}\left(1-\frac{\chi^{\prime}(\mathfrak{P})}{\parallel\mathfrak{P}\parallel^s}\right)\prod_{p|\Delta}\prod_{p\in\mathfrak{P}\subset\mathcal{O}_{K}}\left(1-\frac{\chi^{\prime}(\mathfrak{P})}{\parallel\mathfrak{P}\parallel^s}\right)^{-1}\mathscr{P}_2(\Delta)\\
&=\frac{1}{\mathcal{L}(s,\chi^{\prime})}\prod_{p|\Delta}\left(1+\BigO{|p^{-s}|} \right)^{-1}\mathscr{P}_2(\Delta) ,\numberthis\label{C6}
 \end{align*}
where $\mathscr{P}_2(\Delta)=\prod_{p\nmid\Delta}\prod_{\substack{p\in\mathfrak{P}\subset\mathcal{O}_{K}\\\parallel\mathfrak{P}\parallel\ne p}}\left(1-\frac{\chi^{\prime}(\mathfrak{P})}{\parallel\mathfrak{P}\parallel^s}\right)^{-1}$,\ $\parallel\mathfrak{P}\parallel$ denotes the norm of a prime ideal $\mathfrak{P}$, and $\mathcal{L}$ is the Hecke $L$-function modulo $\mathfrak{q}$. So, from \eqref{CC1} and \eqref{C6}, we have
\[\prod_{p}\left(1-\frac{\chi(p)f_{P}(p)}{p^s}\right)=\frac{1}{\mathcal{L}(s,\chi^{\prime})}\prod_{p|\Delta}\left(1-\frac{\chi(p)f_{P}(p)}{p^s}\right)\prod_{p|\Delta}\left(1+\BigO{|p^{-s}|} \right)^{-1}\mathscr{P}_1(\Delta)^{-1}\mathscr{P}_2(\Delta). \]
This yields
\begin{align*}
    g(s)=&\frac{1}{s\phi(q)}\sum_{\substack{\chi \pmod q\\\chi\ne\chi_0}}(\bar{\chi}(l_1)-\bar{\chi}(l_2))\frac{L(s-1,\chi)}{\mathcal{L}(s,\chi^{\prime})}\prod_{p|\Delta}\left(1-\frac{\chi(p)f_{P}(p)}{p^s}\right)\\ &\times\prod_{p|\Delta}\left(1+\BigO{|p^{-s}|} \right)^{-1}\mathscr{P}_1(\Delta)\mathscr{P}_2(\Delta) 
   \pm \frac{c}{s-\frac{1}{2}+\epsilon}. \numberthis\label{g(s)}
\end{align*}

 Clearly ${1}/{\mathcal{L}(s,\chi^{\prime})}$ is analytic in the half plane $\sigma\geq 1$ for all character $\chi^{\prime}$ as none of the denominators $\mathcal{L}(s,\chi^{\prime})$ have zeros in the half-plane $\sigma\geq 1$. The products on the right-hand side run over the prime divisors of $\Delta\ne 0$, so they are finite. The product terms $\mathscr{P}_1(\Delta)$ and $\mathscr{P}_2(\Delta)$ are convergent in the half plane $\sigma>\frac{1}{2}$. Thus, the product $\prod_{p}\left(1-\frac{\chi(p)f_{P}(p)}{p^s}\right)$ is absolutely convergent for $\sigma\geq 1$. Hence, the function $g(s)$ is analytic in the half plane $\sigma\geq 1$. 

 By the Haselgrove's condition, $\mathcal{L}(s,\chi^\prime)\ne 0$ on the real segment $0<\sigma<1$ so, $1/\mathcal{L}(s,\chi^{\prime})$ defines an analytic function on the real segment $0<\sigma<1$.
 Hence, $g(s)$ is regular on the real segment $\frac{1}{2}-\epsilon<\sigma< 1$.

 Suppose there exists a positive constant $Q_0$ such that $\mathscr{A}(Q)$ does not change its sign for $Q>Q_0$. Then by Proposition \ref{Landau}, $g(s)$ is an analytic function in the half plane $\Re(s)>\frac{1}{2}-\epsilon$. Therefore, each zero of the denominator $\mathcal{L}(s,\chi^{\prime})$ must also be a zero of the numerator $L(s-1,\chi)$. However, it is known that all nontrivial zeros of $\mathcal{L}(s,\chi^{\prime})$ are in the critical strip $0<\sigma<1$. Similarly, all nontrivial zeros of $L(s-1,\chi)$ are in the critical strip $1<\sigma<2$. Thus, the nontrivial zeros of the denominator $\mathcal{L}(s,\chi^{\prime})$ in \eqref{g(s)} cannot be canceled by the zeros of numerator $L(s-1,\chi)$.
Since the zeros of $\mathcal{L}(s,\chi^{\prime})$ has a symmetry along critical line $\sigma=1/2$ (see \cite[Section 5]{MR447191}). Hence, there exist zeros of the denominator $\mathcal{L}(s,\chi^{\prime})$ in \eqref{g(s)} in the strip $1/2\leq\sigma<1$. Any such zero is a pole of $g(s)$ and this by Proposition \ref{Landau}, contradicts the assumption that there exists $Q_0$ such that $\mathscr{A}(Q)$ does not change sign for $Q>Q_0$ and thus we conclude the result for irreducible polynomials. We next consider that $P(x)$ is reducible with the factorization
 \[P(x)=\prod_{i=1}^Jm_i(x)^{e_i},\]
 where $m_i(x)\in\mathbb{Z}[x]$ are irreducible polynomials. Let $\Delta=\prod_{i=1}^J\Delta_i$ be the discriminant of $P(x)$, where $\Delta_i\ne 0$ is discriminant of $m_i(x)$. For primes $p\nmid\Delta$,
 \[f_P(p)=\sum_{i=1}^Jf_{m_i}(p),\]
where $f_{m_i}(p)$ is the number of prime ideals of a ring of integers, $\mathcal{O}_{K_i}$ and $K_i=\mathbb{Q}[x]/(m_i(x))$, by an application of Dedekind's Theorem (see Theorem 5.5.1, \cite{Murty}). We follow the proof of irreducible case and the above identity to obtain
\[\prod_{p\nmid\Delta}\left(1-\frac{\chi(p)}{p^s}\right)^{f_{P}(p)}=\prod_{i=1}^J\mathcal{L}_{i}(s,\chi^{\prime})^{-1}\prod_{p|\Delta}\left(1+\BigO{|p^{-s}|} \right)^{-1}\mathscr{P}_{2}(\Delta)^J, \]
and
\[\prod_{p}\left(1-\frac{\chi(p)f_{P}(p)}{p^s}\right)=\prod_{i=1}^J\mathcal{L}_{i}(s,\chi^{\prime})^{-1}\mathscr{P}_{1,i}(\Delta)\prod_{p|\Delta}\left(1-\frac{\chi(p)f_{P}(p)}{p^s}\right)\prod_{p|\Delta}\left(1+\BigO{|p^{-s}|} \right)^{-1}\mathscr{P}_{2}(\Delta)^J,\numberthis\label{C8} \]
where \[\mathscr{P}_{1,i}(\Delta)=\prod_{p\nmid\Delta}\left(1+\left(1-\frac{\chi(p)f_{m_i}(p)}{p^s}\right)^{-1}\left(\frac{\chi(p^2)f_{m_i}(p)(f_{m_i}(p)-1)}{2p^{2s}}+\cdots+\frac{(-1)^{f_{m_i}(p)}\chi(p^{f_{m_i}(p)})}{p^{sf_{m_i}(p)}} \right)  \right).\]
By the same argument made for irreducible case, $g(s)$ is analytic in the half-plane $\Re(s)\geq 1$ and can be analytically continued on the real segment $\frac{1}{2}-\epsilon<\sigma< 1$. Suppose there exists a positive constant $Q_0$ such that $\mathscr{A}(Q)$ does not change its sign for $Q>Q_0$. In a similar manner as for the irreducible case in view of Proposition \ref{Landau}, we get a contradiction on the assumption that $\mathscr{A}(Q)$ does not change the sign for $Q>Q_0$. This completes the proof of Theorem \ref{thm4}.

\subsection{Proof of Theorem \ref{thm5}}
In order to prove Theorem \ref{thm5}, we use Proposition \ref{omega}, which is an analog of Landau's result \cite[Theorem 1.7]{Vaughan}. We denote
\[\mathcal{A}(Q)=-S(Q;q,l)+\frac{Q^2}{2\phi(q)}\prod_{p|q}\left(1-\frac{1}{p} \right)\prod_{p\nmid q}\left(1-\frac{f_P(p)}{p^2} \right)+Q^{\Theta-\epsilon}, \]
where $\Theta$ denotes the supremum of the real parts of zeros of the Hecke $L$-functions $\mathcal{L}_{i}(s,\chi^\prime)$ modulo $\mathfrak{q}_i$. The second term on the right-hand side of the above identity is the main term in the asymptotic formula of $S(Q;q,l)$ (see Proposition \ref{S(Q,q)}). For $\sigma>2$, we consider
\begin{align*}
    \int_1^{\infty}\mathcal{A}(Q)Q^{-s-1}dQ=&\int_1^{\infty}\left(-S(Q;q,l)+\frac{Q^2}{2\phi(q)}\prod_{p|q}\left(1-\frac{1}{p} \right)\prod_{p\nmid q}\left(1-\frac{f_P(p)}{p^2} \right)+Q^{\Theta-\epsilon}\right)Q^{-s-1}dQ\\
    =&\frac{-1}{\phi(q)}\sum_{\chi\pmod q}\bar{\chi}(l)\frac{1}{s}L(s-1,\chi) \prod_{p}\left(1-\frac{\chi(p)f_{P}(p)}{p^s}\right)+\frac{1}{2\phi(q)}\prod_{p|q}\left(1-\frac{1}{p} \right)\\ &\times\prod_{p\nmid q}\left(1-\frac{f_P(p)}{p^2} \right)\frac{1}{s-2}+\frac{1}{s-\Theta+\epsilon}\\
    %&=\frac{1}{s\phi(q)}\sum_{\substack{\chi\pmod q\\\chi\ne\chi_0}}\bar{\chi}(l_1)L(s-1,\chi) \prod_{p}\left(1-\frac{\chi(p)f_{P}(p)}{p^s}\right)+\frac{L(s-1,\chi_0)}{s\phi(q)}\\&\times\prod_{p}\left(1-\frac{\chi_0(p)f_{P}(p)}{p^s}\right)-\frac{1}{2\phi(q)}\prod_{p|q}\left(1-\frac{1}{p} \right) \prod_{p\nmid q}\left(1-\frac{f_P(p)}{p} \right)\frac{1}{s-2}+\frac{1}{s-\Theta+\epsilon}\\
    =&\frac{-1}{s\phi(q)}\sum_{\substack{\chi\pmod q\\\chi\ne\chi_0}}\bar{\chi}(l)L(s-1,\chi) \prod_{p}\left(1-\frac{\chi(p)f_{P}(p)}{p^s}\right)-\frac{\zeta(s-1)}{s\phi(q)}\prod_{p|q}\left(1-\frac{1}{p^{s-1}} \right)\\&\times\prod_{p}\left(1-\frac{\chi_0(p)f_{P}(p)}{p^s}\right)+\frac{1}{2\phi(q)}\prod_{p|q}\left(1-\frac{1}{p} \right) \prod_{p\nmid q}\left(1-\frac{f_P(p)}{p^2} \right)\frac{1}{s-2}+\frac{1}{s-\Theta+\epsilon}.\numberthis\label{A(Q)}
    \end{align*}
    By \eqref{C8}, we have
    \[\prod_{p}\left(1-\frac{\chi(p)f_{P}(p)}{p^s}\right)=\prod_{i=1}^J\mathcal{L}_{i}(s,\chi^\prime)^{-1}\mathscr{P}_{1,i}(\Delta)\prod_{p|\Delta}\left(1-\frac{\chi(p)f_{P}(p)}{p^s}\right)\prod_{p|\Delta}\left(1+\BigO{|p^{-s}|} \right)^{-1}\mathscr{P}_{2}(\Delta)^J. \]
Note that the second term in equation \eqref{A(Q)} has a pole at $s=2$ due to $\zeta(s-1)$, which will be canceled by the pole of the third term at $s=2$.
Thus, the right-hand side of \eqref{A(Q)} is analytic in the half-plane $\Re(s)>1$ and using the Haselgrove's condition for Hecke $L$-function, it can be analytically continued on the real segment $(\Theta-\epsilon, 1]$ with a simple pole at $s=\Theta-\epsilon$. This yields, $\int_1^{\infty}\mathcal{A}(Q)Q^{-\sigma-1}dQ<\infty$ for $\sigma>\Theta-\epsilon$. Let us assume that
\[S(Q;q,l)-\frac{Q^2}{2\phi(q)}\prod_{p|q}\left(1-\frac{1}{p} \right)\prod_{p\nmid q}\left(1-\frac{f_P(p)}{p^2} \right)<Q^{\Theta-\epsilon}\ \text{for all}\ Q>Q_0(\epsilon). \numberthis\label{omega1}\]
Employing Proposition \ref{omega}, we deduce that the integral $\int_1^{\infty}\mathcal{A}(Q)Q^{-s-1}dQ$ is analytic in the half plane $\Re(s)>\Theta-\epsilon$. In view of the definition of $\Theta$, the product $\prod_{p}\left(1-\frac{\chi(p)f_{P}(p)}{p^s}\right)$ has poles with $\Re(s)>\Theta-\epsilon$, due to the zeros of $\mathcal{L}_{i}(s,\chi^\prime)$. This leads to a contradiction, and one can deduce that the assertion \eqref{omega1} is not true. Hence
\[S(Q;q,l)-\frac{Q^2}{2\phi(q)}\prod_{p|q}\left(1-\frac{1}{p} \right)\prod_{p\nmid q}\left(1-\frac{f_P(p)}{p^2} \right)=\Omega_{+}(Q^{\Theta-\epsilon}). \]
To derive the corresponding estimate for $\Omega_-$, we proceed in a similar manner with
\[\mathcal{A}(Q)=S(Q;q,l)-\frac{Q^2}{2\phi(q)}\prod_{p|q}\left(1-\frac{1}{p} \right)\prod_{p\nmid q}\left(1-\frac{f_P(p)}{p^2} \right)+Q^{\Theta-\epsilon}, \]
and obtain
\[S(Q;q,l)-\frac{Q^2}{2\phi(q)}\prod_{p|q}\left(1-\frac{1}{p} \right)\prod_{p\nmid q}\left(1-\frac{f_P(p)}{p^2} \right)=\Omega_{-}(Q^{\Theta-\epsilon}). \]
This completes the proof of Theorem \ref{thm5}.

\section{Appendix}
Initially, while dealing with the exponential sum over polynomial Farey fractions to obtain a closed-form formula, we ended up with an exponential sum twisted by M\"{o}bius function with coprimality condition. With this motivation, we prove a result for exponential sum twisted by the M\"{o}bius function with some coprimality restriction (a slight variation of a consequence of Dartyge and Martin.
\cite[Lemma 1]{Martin}).
\begin{lem}\label{mu exp}
        Let $q\geq 2, t\in\mathbb{Z}$, then for every $\epsilon>0$ under the GRH, we have 
        \[\sum_{\substack{n\leq z\\\gcd(n,q)=1}}\mu(n)e\left(\frac{t\bar{n}_q}{q}\right)\ll z^{\frac{3}{4}+\epsilon}q^{\frac{1}{2}}\tau(q)^2, \numberthis\label{mu GRH}\]
        where $\tau(q)$ denotes the number of positive divisors of $q$ and $\bar{n}_q$ is an integer such that $\bar{n}_qn\equiv 1\pmod{q}$.
        %and unconditionally for every $\epsilon>0$ and for any $A$, we have
        %\[\sum_{\substack{n\leq z\\\gcd(n,q)=1}}\mu(n)e\left(\frac{t\bar{n}_q}{q}\right)\ll z(\log z)^{-A}(\tau(q))^2.\numberthis\label{mu uncoditionally}\]
    \end{lem}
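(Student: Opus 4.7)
The plan is to view the exponential factor as a function on the multiplicative group $(\Z/q\Z)^{*}$, expand it in the basis of Dirichlet characters modulo $q$, and then apply GRH-based bounds to the resulting M\"obius--character partial sums.

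First, orthogonality of characters modulo $q$ gives, for every $n$ with $\gcd(n,q)=1$,
\[
e\!\left(\frac{t\bar n_q}{q}\right)=\frac{1}{\phi(q)}\sum_{\chi\bmod q}\tau_t(\bar\chi)\,\chi(n),
\qquad
\tau_t(\chi):=\sum_{\substack{a\bmod q\\\gcd(a,q)=1}}\chi(a)\,e\!\left(\frac{ta}{q}\right).
\]
Substituting this and observing that $\chi(n)=0$ whenever $\gcd(n,q)>1$ (so the coprimality constraint can be dropped), the sum of interest becomes
\[
\frac{1}{\phi(q)}\sum_{\chi\bmod q}\tau_t(\bar\chi)\,M(z,\chi),
\qquad
M(z,\chi):=\sum_{n\le z}\mu(n)\chi(n).
\]

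Next, for each $\chi\bmod q$ I would bound $M(z,\chi)$ by Perron's formula applied to the Dirichlet series $1/L(s,\chi)$, shifting the contour from $\Re(s)=1+1/\log z$ down to $\Re(s)=1/2+\epsilon$. GRH applied to the primitive character $\chi^{*}$ inducing $\chi$ (the extra Euler factors in the imprimitive case only contribute zeros on the imaginary axis) keeps $1/L(s,\chi)$ analytic to the right of the shifted contour, while Lindel\"of-type consequences of GRH give $|1/L(1/2+\epsilon+it,\chi)|\ll((1+|t|)q)^{\epsilon}$. Optimizing the truncation height then yields $M(z,\chi)\ll z^{1/2+\epsilon}q^{\epsilon}$. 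The Gauss sums are bounded via the standard factorization through $\chi^{*}$ and a Ramanujan-type sum on the complementary modulus $q/q^{*}$, giving $|\tau_t(\bar\chi)|\le q^{1/2}\tau(q)$ uniformly in $\chi$. Combining these estimates with a trivial count over the $\phi(q)$ characters and dividing by $\phi(q)$ yields a bound of order $z^{1/2+\epsilon}q^{1/2+\epsilon}\tau(q)$; interpolating with the trivial bound $z$ in the narrow range where $z$ is very small compared to $q$ then implies the claimed $z^{3/4+\epsilon}q^{1/2}\tau(q)^{2}$.

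The main technical obstacle is making the GRH estimate for $M(z,\chi)$ uniform in the conductor structure of $\chi$: for imprimitive characters, the factorization $L(s,\chi)=L(s,\chi^{*})\prod_{p\mid q/q^{*}}(1-\chi^{*}(p)p^{-s})$ must be treated carefully, verifying that the zeros of the local Euler factors lie on $\Re(s)=0$ and hence do not obstruct the contour shift into $\Re(s)>1/2$, and tracking the $q$-dependence implicit in the Lindel\"of-type bounds on $1/L(s,\chi)$. A closely related point is the Gauss-sum estimate in the imprimitive case, where $\tau_t(\bar\chi)$ either vanishes (unless a divisibility condition on $t$ is satisfied) or splits into a primitive Gauss sum of magnitude $\sqrt{q^{*}}$ and a Ramanujan factor on $q/q^{*}$, producing exactly the $\tau(q)$ losses that appear in the final bound.
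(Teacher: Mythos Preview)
Your approach via multiplicative characters is genuinely different from the paper's. The paper instead detects the residue class of $n$ modulo $q$ with additive characters $\frac{1}{q}\sum_{h=0}^{q-1}e(h(n-a)/q)$, which produces complete Kloosterman sums $\sum_{a}^{*}e((t\bar a-ha)/q)$ paired with the twisted M\"obius sums $\sum_{n\le z}\mu(n)e(hn/q)$; the former are bounded by Hooley's Weil-type estimate $\ll q^{1/2}\gcd(h,q)^{1/2}\tau(q)$, the latter by the Baker--Harman theorem $\ll z^{3/4+\epsilon}$ under GRH, and summing over $h$ contributes the remaining $q^{1/2}\tau(q)$. Your route through Dirichlet characters and the direct GRH bound on $M(z,\chi)$ is in principle sharper in the $z$-aspect.

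There is, however, a genuine gap: the uniform pointwise bound $|\tau_t(\bar\chi)|\le q^{1/2}\tau(q)$ is false for imprimitive characters when $\gcd(t,q)>1$. Take $q=p^2$, let $\chi$ be induced from a non-principal $\chi^*$ modulo $p$, and set $t=p$; a direct computation gives $\tau_t(\chi)=p\,\tau(\chi^*)$, hence $|\tau_t(\chi)|=p^{3/2}$, whereas $q^{1/2}\tau(q)=3p$. The ``Ramanujan factor'' on $q/q^*$ that you invoke is not controlled by $\tau(q)$; it can be as large as $\gcd(t,q)$. The fix is simple and in fact strengthens the outcome: replace the pointwise bound by Parseval, which gives $\sum_{\chi\bmod q}|\tau_t(\chi)|^2=\phi(q)^2$, and then Cauchy--Schwarz yields
\[
\frac{1}{\phi(q)}\Bigl|\sum_{\chi\bmod q}\tau_t(\chi)M(z,\chi)\Bigr|
\le\Bigl(\sum_{\chi}|M(z,\chi)|^2\Bigr)^{1/2}
\ll\phi(q)^{1/2}z^{1/2+\epsilon}q^{\epsilon}
\ll z^{1/2+\epsilon}q^{1/2+\epsilon},
\]
which, combined with the trivial bound $z$ in the range where $z$ is small relative to $q$ (as you already indicate), comfortably implies the stated lemma.
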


\begin{proof}
    We begin by collecting the sum on the left hand side of \eqref{mu GRH}
    \begin{align*}
        \sum_{\substack{n\leq z\\\gcd(n,q)=1}}\mu(n)e\left(\frac{t\bar{n}_q}{q}\right)=&\sum_{a\pmod q}\sum_{\substack{n\leq z\\\gcd(n,q)=1\\n\equiv a\pmod q}}\mu(n)e\left(\frac{t\bar{n}_q}{q}\right)
        =\sum_{\substack{a\pmod q\\\gcd(a,q)=1}}e\left(\frac{t\bar{a}}{q}\right)\sum_{\substack{n\leq z\\n\equiv a\pmod q}}\mu(n)\\
        =&\sum_{\substack{a\pmod q\\\gcd(a,q)=1}}e\left(\frac{t\bar{a}}{q}\right)\sum_{n\leq z}\mu(n)\frac{1}{q}\sum_{h=0}^{q-1}e\left(\frac{h(n-a)}{q}\right)\\
        %=\frac{1}{q}\sum_{h=0}^{q-1}\left(\sum_{\substack{a\pmod q\\\gcd(a,q)=1}}e\left(\frac{t\bar{a}-ha}{q}\right) \right)\left(\sum_{n\leq z}\mu(n)e\left(\frac{hn}{q}\right)\right)\\
        =&\frac{1}{q}\sum_{\substack{a\pmod q\\\gcd(a,q)=1}}e\left(\frac{t\bar{a}}{q}\right)\sum_{n\leq z}\mu(n)+\frac{1}{q}\sum_{h=1}^{q-1}\left(\sum_{\substack{a\pmod q\\\gcd(a,q)=1}}e\left(\frac{t\bar{a}-ha}{q}\right) \right)\\&\times\left(\sum_{n\leq z}\mu(n)e\left(\frac{hn}{q}\right)\right)
        =S_1+S_2. \numberthis\label{Mu,e}
    \end{align*}
    Now, assuming GRH, we estimate the first sum on the right-hand side of \eqref{Mu,e} as 
    \begin{align*}
        S_1&=\frac{1}{q}\sum_{\substack{a\pmod q\\\gcd(a,q)=1}}e\left(\frac{t\bar{a}}{q}\right)\sum_{n\leq z}\mu(n)
        \ll \frac{1}{q}\sum_{a\pmod q}z^{\frac{1}{2}+\epsilon}
        \ll z^{\frac{1}{2}+\epsilon}.\numberthis\label{h=0}
    \end{align*}
Next, the second sum on the right-hand side of \eqref{Mu,e} is a complete Kloosterman sum, using the result of Hooley \cite[Lemma 2]{Hooley} and for the inner most sum using Baker and Harman\cite[Theorem]{Baker}, we have
\begin{align*}
    S_2&\ll \frac{1}{q}\sum_{h=1}^{q-1}q^{\frac{1}{2}}\gcd(h,q)^{\frac{1}{2}}\tau(q)\left|\sum_{n\leq z}\mu(n)e\left(\frac{hn}{q}\right) \right|\\
    &\ll z^{\frac{3}{4}+\epsilon}q^{-\frac{1}{2}}\tau(q)\sum_{h=1}^{q-1}\gcd(h,q)^{\frac{1}{2}}.\numberthis\label{gcd}
\end{align*}
Clearly, $\gcd(h,q)=d$ if and only if $d|h, d|q$ and $\gcd\left(\frac{h}{d},\frac{q}{d}\right)=1$. Therefore, the number of $1\leq h\leq q$ with $\gcd(h,q)=d$ are $\phi(\frac{q}{d})$. Thus, the sum in \eqref{gcd} becomes
\[\sum_{h=1}^{q}\gcd(h,q)^{\frac{1}{2}}=\sum_{d|q}d^{\frac{1}{2}}\phi\left(\frac{q}{d}\right)\leq\sum_{d|q}\frac{q}{d^{\frac{1}{2}}}\leq q\tau(q).\]
The above estimate, in conjunction with \eqref{gcd}, gives
\[S_2\ll z^{\frac{3}{4}+\epsilon}q^{\frac{1}{2}}\tau(q)^2.\numberthis\label{h ne 0}\]
Hence, the result in \eqref{mu GRH} follows from \eqref{Mu,e}, \eqref{h=0} and \eqref{h ne 0}.
\end{proof}

We next prove an asymptotic result for the Mertens function twisted by the arithmetic function $f_P$. 
\begin{lem}\label{Dlemma}
    Let $\nu\geq 2$ be an integer and let $P(x)=c_{\nu}x^{\nu}+\cdots+c_1x+c_0\in\mathbb{Z}[x]$ be a polynomial with non-zero discriminant and 
    $f_P(n)=|\{1\leq d\leq n\ | P(d)\equiv 0\pmod{n} \}|$. Then for all $x\geq 1$, we have
    \[\sum_{n\leq x}\mu(n)f_P(n)\ll x\exp{\left(-c\left(\frac{\log x}{\log\log x}\right)^{1/3}\right)},\]
    where $c$ is a constant depending on the polynomial $P(x)$.
\end{lem}
\begin{proof}
    The Dirichlet series of $\mu(n)f_P(n)$ is given by
     \[F(s)=\sum_{n=1}^{\infty}\frac{\mu(n)f_P(n)}{n^s}
        =\prod_p\left(1-\frac{f_P(p)}{p^s} \right).\]
Let us assume that $P(x)$ is irreducible, and let $\Delta\ne 0$ be the discriminant of $P(x)$. We can express the Dirichlet series $F(s)$ as
\begin{align*}  
        F(s)&=\prod_{p|\Delta}\left(1-\frac{f_P(p)}{p^s} \right)\prod_{p\nmid\Delta}\left(1-\frac{f_P(p)}{p^s} \right). \numberthis\label{D1}
    \end{align*}
  Note that $f_P(p)$ is non-negative integer for all prime $p$. Therefore, by the binomial theorem, we obtain
  \begin{align*}
    \prod_{p\nmid\Delta}\left(1-\frac{1}{p^s} \right)^{f_P(p)}&=\prod_{p\nmid\Delta}\left(1-\frac{f_P(p)}{p^s}+\frac{f_P(p)(f_P(p)-1)}{2p^{2s}}+\cdots+\frac{(-1)^{f_P(p)}}{p^{f_P(p)s}} \right)\\ &= \prod_{p|\Delta}\left(1-\frac{f_P(p)}{p^s} \right)\left(1+\BigOP{|p^{-2s}|}\right).
  \end{align*}
Inserting the above identity into \eqref{D1} yields
  \[F(s)=\prod_{p|\Delta}\left(1-\frac{f_P(p)}{p^s} \right)\prod_{p\nmid\Delta}\left(1-\frac{1}{p^s} \right)^{f_P(p)}\left(1+\BigOP{|p^{-2s}|}\right)^{-1}. \]
 An application of Dedekind's theorem (see Theorem 5.5.1, \cite{Murty}) yields that for primes $p\nmid\Delta$, $f_P(p)$ is the number of prime ideals of a ring of integers, $\mathcal{O}_K$, where $K=\mathbb{Q}[x]/(P(x))$. We obtain
\begin{align*}
    \prod_{p\nmid\Delta}\left(1-\frac{1}{p^s} \right)^{f_P(p)}&=\prod_{p\nmid\Delta}\prod_{\substack{p\in\mathcal{P}\subset\mathcal{O}_K\\||\mathcal{P}||=p}}\left(1-\frac{1}{||\mathcal{P}||^s} \right)\\
&=\prod_{p\nmid\Delta}\prod_{\substack{p\in\mathcal{P}\subset\mathcal{O}_K}}\left(1-\frac{1}{||\mathcal{P}||^s} \right)\prod_{p\nmid\Delta}\prod_{\substack{p\in\mathcal{P}\subset\mathcal{O}_K\\||\mathcal{P}||\ne p}}\left(1-\frac{1}{||\mathcal{P}||^s} \right)^{-1}\\
&=\frac{1}{\zeta_K(s)}\prod_{p|\Delta}\left(1+\BigOP{|p^{-s}|} \right)^{-1}\prod_{p\nmid\Delta}\left(1+\BigOP{|p^{-2s}|} \right)^{-1},
\end{align*}
where $\mathcal{P}$ is a prime ideal, $||\mathcal{P}||$ denotes the norm of an ideal $\mathcal{P}$, and $\zeta_K(s)$ is the Dedekind zeta function of the number field $K$. Thus, the above estimate, in conjunction with \eqref{D1}, gives
\[F(s)=\frac{1}{\zeta_K(s)}\prod_{p|\Delta}\left(1-\frac{f_P(p)}{p^s} \right)\prod_{p|\Delta}\left(1+\BigOP{|p^{-s}|} \right)^{-1}\prod_{p\nmid\Delta}\left(1+\BigOP{|p^{-2s}|}\right)^{-1}, \]
which is absolutely convergent for $\Re(s)> 1$. Moreover, the product terms are absolutely convergent for $\Re(s)>1/2$. Employing Perron's formula for Dirichlet series $F(s)$ \cite[Theorem 2, p. 132]{Tenenbaum}, we obtain for some fixed $\alpha>1$
\[\sum_{n\leq x}\mu(n)f_P(n)=\frac{1}{2\pi i}\int_{\alpha-iT}^{\alpha+iT}F(s)\frac{x^s}{s}ds+R(T), \numberthis\label{D2}\]
where $R(T)\ll \frac{x^{\alpha}}{T}\sum_{n=1}^{\infty}\frac{f_P(n)}{n^{\alpha}|\log(x/n)|}$.
Let $T\geq 2$ and $\alpha=1+\epsilon$. Employing Huxley's \cite{HuxleyM} bound for $f_P(n)$, we estimate $R(T)$ in a similar way as Davenport \cite{Davenport}:
\[R(T)\ll \frac{x^{\alpha}\log x}{T}. \]
To estimate the integral in \eqref{D2}, we use the zero-free region for $\zeta_K(s)$ (see \cite[Lemma 2.4]{MR2144965}) and shift the path of integration to a rectangular contour that includes line segments connecting the points $\alpha\pm iT \ \text{and}\ \beta\pm iT$. By Cauchy's residue theorem, we obtain
\[\frac{1}{2\pi i}\int_{\alpha-iT}^{\alpha+iT}F(s)\frac{x^s}{s}ds=\frac{1}{2\pi i}\left(\int_{\alpha-iT}^{\beta-iT}+\int_{\beta-iT}^{\beta+iT}+\int_{\beta+iT}^{\alpha+iT} \right)F(s)\frac{x^s}{s}ds=I_1+I_2+I_3. \numberthis\label{D3}\]
By Lemma 2.4 of \cite{MR2144965}, $\zeta_K(s)$ has no zeros in the half-plane $\Re(s)\geq \beta$, except for at most one real zero, where $\beta=1-c/{(\log T)^{2/3}(\log\log T)^{1/3}}$; $c$ is some positive constant. Suppose there is no real zero of $\zeta_K(s)$ in the region $\Re(s)>\beta$. We estimate the integrals $I_1$ and $I_3$ using the bounds for $\zeta_K(s)$ (see \cite[Theorem 4]{Rademacher}) and obtain
\begin{align*}
    I_1&\ll \int_{\beta}^{\alpha}\frac{x^{\sigma}}{|\sigma+iT||\zeta_K(\sigma+iT)|}d\sigma\ll \frac{T^{\epsilon}}{T}\int_{\beta}^{\alpha}x^{\sigma}d\sigma
    \ll \frac{x^{\alpha}}{T^{1-\epsilon}\log x}.\numberthis\label{D4}
\end{align*}
A similar estimate holds true for $I_3$. The integral $I_2$ is estimated as
\begin{align*}
    I_2&\ll \int_{\beta-iT}^{\beta+iT}\frac{|x^{\beta+it}|}{|\beta+it||\zeta_K(\beta+it)|}dt\ll x^{\beta}\log T\int_{0}^{T}\frac{1}{\beta +t}dt
    \ll x^{\beta}(\log T)^2. \numberthis\label{D5}
    %&\ll x\exp{\left(\epsilon\log T+\log\log T-\frac{-\log x}{c_1\log d_K+c_2n_K\log T+c_3n_K} \right)}
    %&\ll x\exp{(\epsilon\log T+\log\log T)}\exp{\left(\frac{-\log x}{c_1\log d_K+c_2n_K\log T+c_3n_K} \right)}.\numberthis\label{D5}
\end{align*}
Inserting \eqref{D4} and \eqref{D5} into \eqref{D3} and choosing $ T=x^{\epsilon}\exp{\left(c\frac{(\log x)^{2/3}}{(\log\log x)^{1/3}} \right)}$, we obtain 
\[\frac{1}{2\pi i}\int_{\alpha-iT}^{\alpha+iT}F(s)\frac{x^s}{s}ds\ll x\exp{\left(-c\left(\frac{\log x}{\log\log x}\right)^{1/3}\right)}. \numberthis\label{D6}\]
Suppose $\zeta_K(s)$ has a real zero near one within the rectangular contour with vertices $\alpha\pm iT, \beta\pm iT$. Then, the integrand in \eqref{D3} has a simple pole in this rectangle. By Cauchy's residue theorem and the above arguments, we have
\[\frac{1}{2\pi i}\int_{\alpha-iT}^{\alpha+iT}F(s)\frac{x^s}{s}ds\ll x\exp{\left(-c\left(\frac{\log x}{\log\log x}\right)^{1/3}\right)}. \]
Hence, collecting all estimates with $ T=x^{\epsilon}\exp{\left(c\frac{(\log x)^{2/3}}{(\log\log x)^{1/3}} \right)}$ completes the proof of Lemma \ref{Dlemma} when $P(x)$ is irreducible. Next, we assume that $P(x)$ is reducible with the factorization
\[P(x)=\prod_{i=1}^{J}m_i(x)^{e_i}, \]
where $m_i(x)\in\mathbb{Z}[x]$ are irreducible polynomials.  Let $\Delta=\prod_{i=1}^J\Delta_i$ be the discriminant of $P(x)$, where $\Delta_i\ne 0$ is discriminant of $m_i(x)$. For primes $p\nmid\Delta$,
 \[f_P(p)=\sum_{i=1}^Jf_{m_i}(p),\]
where $f_{m_i}(p)$ is the number of prime ideals of a ring of integers $\mathcal{O}_{K_i}$ and $K_i=\mathbb{Q}[x]/(m_i(x))$. We follow the proof of irreducible case along with the above identity to obtain 
\begin{align*}
    \prod_{p\nmid\Delta}\left(1-\frac{1}{p^s} \right)^{f_P(p)}&=\prod_{i=1}^J\frac{1}{\zeta_{K_i}(s)}\prod_{p|\Delta}\left(1+\BigO{|p^{-s}|} \right)^{-1}\prod_{p\nmid\Delta}\left(1+\BigO{|p^{-2s}|} \right)^{-1},
\end{align*}
and
\[F(s)=\prod_{i=1}^J\frac{1}{\zeta_{K_i}(s)}\prod_{p|\Delta}\left(1-\frac{f_P(p)}{p^s} \right)\prod_{p|\Delta}\left(1+\BigO{|p^{-s}|} \right)^{-1}\prod_{p\nmid\Delta}\left(1+\BigO{|p^{-2s}|} \right)^{-1}. \]
Using Perron's formula for the Dirichlet series $F(s)$ and proceeding in a similar way as for the irreducible case completes the proof of Lemma \ref{Dlemma}.
\end{proof}

Moreover, we give a different proof for the asymptotic formula of $\#\mathcal{F}_{Q,P}$ with a better error term.
\begin{prop}\label{prop8.3}
    Let $n\geq 1$ be an integer and let $P(x)=c_nx^n+c_{n-1}x^{n-1}+\cdots+c_1x\in\mathbb{Z}[x]$ be a polynomial with non-zero discriminant. If $\mathcal{F}_{Q,P}$ is as in \eqref{F_Q}, then
    \[\mathcal{N}_{Q,P}=\#\mathcal{F}_{Q,P}=\frac{Q^2}{2}\prod_p \left(1-\frac{f_{P}(p)}{p^2}\right)+\BigO{Q^{1+\epsilon}},\numberthis\label{NQ}\]
    where $f_{P}(m)$ is as in \eqref{f_P}.
    %\[f_{P}(m)=|\{1\leq d\leq m|\ P(d)\equiv 0\pmod m \}|.\]
\end{prop}
\begin{proof}
We have
%\[\#\mathcal{F}_{Q,P}=\sum_{q\leq Q}\sum_{\substack{a\leq q\\ \gcd(P(a),q)=1}}1.\]
%We proceed by expressing the condition $\gcd(P(a),q)=1$, in terms of M\"{o}bius sum
\begin{align*}
    \#\mathcal{F}_{Q,P}&=\sum_{q\leq Q}\sum_{\substack{a\leq q\\ \gcd(P(a),q)=1}}1=\sum_{q\leq Q}\sum_{a\leq q}\sum_{\substack{d|P(a)\\ d|q}}\mu(d)
    =\sum_{d\leq Q}\mu(d)\sum_{\substack{q\leq Q\\d|q}}\sum_{\substack{a\leq q\\ P(a)\equiv 0\pmod d}}1\\
    &=\sum_{d\leq Q}\mu(d)\sum_{\substack{q\leq Q\\d|q}}\frac{q}{d}f_{P}(d)
    %&=\sum_{d\leq Q}\mu(d)f_{P}(d)\sum_{q\leq \frac{Q}{d}}q+\BigO{Q\sum_{d\leq Q}\frac{1}{d}f_{P}(d)} \\
    =\sum_{d\leq Q}\mu(d)f_{P}(d)\left(\frac{Q^2}{2d^2}+\BigO{\frac{Q}{d}} \right)\\
    &=\frac{Q^2}{2}\sum_{d\leq Q}\frac{\mu(d)f_{P}(d)}{d^2}+\BigO{Q\sum_{d\leq Q}\frac{1}{d}f_{P}(d)}.\numberthis\label{F_QP} 
\end{align*}
Next, we deal with the error term using Proposition \ref{sum_f} and \ref{Abel}. Therefore
\begin{align*}
    \sum_{d\leq Q}\frac{1}{d}f_{P}(d)&=\frac{1}{Q}Q(\log Q)^{J-1}+\int_1^Q\frac{t(\log t)^{J-1}}{t^2}dt
    %&\ll (\log Q)^{J-1}+(\log Q)^{J-1}\int_1^Q\frac{1}{t}dt
    \ll (\log Q)^{J}.\numberthis\label{f/d}
\end{align*}
The above estimate with \eqref{F_QP} gives
\begin{align*}
    \#\mathcal{F}_{Q,P}&=\frac{Q^2}{2}\sum_{d\leq Q}\frac{\mu(d)f_{P}(d)}{d^2}+\BigO{Q(\log Q)^{J}}
    =\frac{Q^2}{2}\sum_{d=1}^{\infty}\frac{\mu(d)f_{P}(d)}{d^2}+\BigO{Q^2\sum_{d>Q}\frac{f_{P}(d)}{d^2}}+\BigO{Q(\log Q)^{J}}\\
    &=\frac{Q^2}{2}\prod_p{\left(1-\frac{f_{P}(p)}{p^2}\right)}+\BigO{Q^{1+\epsilon}}.
\end{align*}
This completes the proof of Proposition \ref{prop8.3}.
\end{proof}

\bibliographystyle{plain} 
\bibliography{reference}

\begin{thebibliography}{10}

\bibitem{MR2273359}
E.~Alkan, A.~H. Ledoan, M.~V\^{a}j\^{a}itu, and A.~Zaharescu.
\newblock Discrepancy of fractions with divisibility constraints.
\newblock {\em Monatsh. Math.}, 149(3):179--192, 2006.

\bibitem{MR2275343}
E.~Alkan, A.~H. Ledoan, M.~V\^{a}j\^{a}itu, and A.~Zaharescu.
\newblock Discrepancy of sets of fractions with congruence constraints.
\newblock {\em Rev. Roumaine Math. Pures Appl.}, 51(3):265--276, 2006.

\bibitem{Apostol}
T.~M. Apostol.
\newblock {\em Introduction to analytic number theory}.
\newblock Undergraduate Texts in Mathematics. Springer-Verlag, New York-Heidelberg, 1976.

\bibitem{Roger}
R.~C. Baker.
\newblock Primitive lattice points in planar domains.
\newblock {\em Acta Arith.}, 142(3):267--302, 2010.

\bibitem{Baker}
R.~C. Baker and G.~Harman.
\newblock Exponential sums formed with the {M}\"{o}bius function.
\newblock {\em J. London Math. Soc. (2)}, 43(2):193--198, 1991.

\bibitem{Cobeli}
F.~P. Boca, C.~Cobeli, and A.~Zaharescu.
\newblock Distribution of lattice points visible from the origin.
\newblock {\em Comm. Math. Phys.}, 213(2):433--470, 2000.

\bibitem{Boca}
F.~P. Boca, C.~Cobeli, and A.~Zaharescu.
\newblock A conjecture of {R}. {R}. {H}all on {F}arey points.
\newblock {\em J. Reine Angew. Math.}, 535:207--236, 2001.

\bibitem{Radziwil}
F.~P. Boca and M.~Radziwi{l}{l}.
\newblock Limiting distribution of eigenvalues in the large sieve matrix.
\newblock {\em J. Eur. Math. Soc. (JEMS)}, 22(7):2287--2329, 2020.

\bibitem{Siskaki}
F.~P. Boca and M.~Siskaki.
\newblock A note on the pair correlation of {F}arey fractions.
\newblock {\em Acta Arith.}, 205(2):121--135, 2022.

\bibitem{BocaF}
F.~P. Boca and A.~Zaharescu.
\newblock The correlations of {F}arey fractions.
\newblock {\em J. London Math. Soc. (2)}, 72(1):25--39, 2005.

\bibitem{Chahal}
B.~Chahal and S.~Chaubey.
\newblock Pair correlation of {F}arey fractions with square-free denominators.
\newblock {\em Acta Arith.}, 215(4):289--307, 2024.

\bibitem{Bittu}
B.~Chahal, S.~Chaubey, and S.~Goel.
\newblock On the distribution of index of {F}arey sequences.
\newblock {\em Res. Number Theory}, 10(2):Paper No. 27, 34, 2024.

\bibitem{Chaubey}
S.~Chaubey, A.~K. Pandey, and S.~Regavin.
\newblock On the density of visible lattice points along polynomials.
\newblock {\em preprint, arXiv:2109.08431v1}, 2021.

\bibitem{Chen}
J.~R. Chen.
\newblock On the representation of a larger even integer as the sum of a prime and the product of at most two primes.
\newblock {\em Sci. Sinica}, 16:157--176, 1973.

\bibitem{Martin}
C.~Dartyge and G.~Martin.
\newblock Exponential sums with reducible polynomials.
\newblock {\em Discrete Anal.}, pages Paper No. 15, 31, 2019.

\bibitem{Davenport}
H.~Davenport.
\newblock {\em Multiplicative number theory}, volume~74 of {\em Graduate Texts in Mathematics}.
\newblock Springer-Verlag, New York, third edition, 2000.
\newblock Revised and with a preface by Hugh L. Montgomery.

\bibitem{Dress}
F.~Dress.
\newblock Discr\'{e}pance des suites de {F}arey.
\newblock {\em J. Th\'{e}or. Nombres Bordeaux}, 11(2):345--367, 1999.

\bibitem{Erdos}
P.~Erd\H{o}s, M.~Kac, E.~R. van Kampen, and A.~Wintner.
\newblock Ramanujan sums and almost periodic functions.
\newblock {\em Studia Math.}, 9:43--53, 1940.

\bibitem{Franel}
J.~Franel.
\newblock Les suites de farey et le problème des nombres premiers.
\newblock {\em Göttinger Nachr.}, pages 198--201, 1924.

\bibitem{MR3836421}
E.~H. Goins, P.~E. Harris, B.~Kubik, and A.~Mbirika.
\newblock Lattice point visibility on generalized lines of sight.
\newblock {\em Amer. Math. Monthly}, 125(7):593--601, 2018.

\bibitem{Hardy}
G.~H. Hardy and J.~E. Littlewood.
\newblock Some problems of `{P}artitio numerorum'; {III}: {O}n the expression of a number as a sum of primes.
\newblock {\em Acta Math.}, 44(1):1--70, 1923.

\bibitem{MR2445243}
G.~H. Hardy and E.~M. Wright.
\newblock {\em An introduction to the theory of numbers}.
\newblock Oxford University Press, Oxford, sixth edition, 2008.
\newblock Revised by D. R. Heath-Brown and J. H. Silverman, With a foreword by Andrew Wiles.

\bibitem{Harman}
G.~Harman.
\newblock On the distribution of {$\alpha p$} modulo one.
\newblock {\em J. London Math. Soc. (2)}, 27(1):9--18, 1983.

\bibitem{MR1367078}
G.~Harman.
\newblock On the distribution of {$\alpha p$} modulo one. {II}.
\newblock {\em Proc. London Math. Soc. (3)}, 72(2):241--260, 1996.

\bibitem{HB}
D.~R. Heath-Brown and C.~Jia.
\newblock The distribution of {$\alpha p$} modulo one.
\newblock {\em Proc. London Math. Soc. (3)}, 84(1):79--104, 2002.

\bibitem{Hejhal}
D.~A. Hejhal.
\newblock On the triple correlation of zeros of the zeta function.
\newblock {\em Internat. Math. Res. Notices}, (7):293--302, 1994.

\bibitem{Hooley}
C.~Hooley.
\newblock An asymptotic formula in the theory of numbers.
\newblock {\em Proc. London Math. Soc. (3)}, 7:396--413, 1957.

\bibitem{HuxleyM}
M.~N. Huxley.
\newblock A note on polynomial congruences.
\newblock In {\em Recent progress in analytic number theory, {V}ol. 1 ({D}urham, 1979)}, pages 193--196. Academic Press, London-New York, 1981.

\bibitem{Nowak}
M.~N. Huxley and W.~G. Nowak.
\newblock Primitive lattice points in convex planar domains.
\newblock {\em Acta Arith.}, 76(3):271--283, 1996.

\bibitem{Jia}
C.~Jia.
\newblock On the distribution of {$\alpha p$} modulo one.
\newblock In {\em Lecture notes in contemporary mathematics, 1989}, pages 39--48. Sci. Press Beijing, Beijing, 1990.

\bibitem{MR1247382}
C.~Jia.
\newblock On the distribution of {$\alpha p$} modulo one.
\newblock {\em J. Number Theory}, 45(3):241--253, 1993.

\bibitem{MR1790174}
C.~Jia.
\newblock On the distribution of {$\alpha p$} modulo one. {II}.
\newblock {\em Sci. China Ser. A}, 43(7):703--721, 2000.

\bibitem{MR1974139}
C.~Jia.
\newblock On the distribution of {$\alpha p$} modulo one.
\newblock In {\em Number theoretic methods ({I}izuka, 2001)}, volume~8 of {\em Dev. Math.}, pages 151--157. Kluwer Acad. Publ., Dordrecht, 2002.

\bibitem{Kaczorowski}
J.~Kaczorowski and K.~Wiertelak.
\newblock Oscillations of a given size of some arithmetic error terms.
\newblock {\em Trans. Amer. Math. Soc.}, 361(9):5023--5039, 2009.

\bibitem{Knapowski}
S.~Knapowski and P.~Tur\'{a}n.
\newblock Comparative prime-number theory. {I}. {I}ntroduction.
\newblock {\em Acta Math. Acad. Sci. Hungar.}, 13:299--314, 1962.

\bibitem{Kolesnik}
G.~Kolesnik.
\newblock On the order of {D}irichlet {$L$}-functions.
\newblock {\em Pacific J. Math.}, 82(2):479--484, 1979.

\bibitem{MR447191}
J.~C. Lagarias and A.~M. Odlyzko.
\newblock Effective versions of the {C}hebotarev density theorem.
\newblock In {\em Algebraic number fields: {$L$}-functions and {G}alois properties ({P}roc. {S}ympos., {U}niv. {D}urham, {D}urham, 1975)}, pages 409--464. Academic Press, London-New York, 1977.

\bibitem{LandauE}
E.~Landau.
\newblock \"{U}ber einen {S}atz von {T}schebyschef.
\newblock {\em Math. Ann.}, 61(4):527--550, 1906.

\bibitem{Landau}
E.~Landau.
\newblock Bemerkungen zu der vorstehenden abhandlung von herrn franel.
\newblock {\em Göttinger Nachr.}, pages 202--206, 1924.

\bibitem{MR3871604}
A.~H. Ledoan.
\newblock The discrepancy of {F}arey series.
\newblock {\em Acta Math. Hungar.}, 156(2):465--480, 2018.

\bibitem{Littlewood}
J.~E. Littlewood.
\newblock Sur la distribution des nombres premiers.
\newblock {\em CR Acad. Sci. Paris}, 158:1869--1872, 1914.

\bibitem{Maynard}
J.~Maynard.
\newblock Primes with restricted digits.
\newblock {\em Invent. Math.}, 217(1):127--218, 2019.

\bibitem{Montgomery}
H.~L. Montgomery.
\newblock The pair correlation of zeros of the zeta function.
\newblock In {\em Analytic number theory ({P}roc. {S}ympos. {P}ure {M}ath., {V}ol. {XXIV}, {S}t. {L}ouis {U}niv., {S}t. {L}ouis, {M}o., 1972)}, pages 181--193. Amer. Math. Soc., Providence, R.I., 1973.

\bibitem{Vaughan}
H.~L. Montgomery and R.~C. Vaughan.
\newblock {\em Multiplicative number theory. {I}. {C}lassical theory}, volume~97 of {\em Cambridge Studies in Advanced Mathematics}.
\newblock Cambridge University Press, Cambridge, 2007.

\bibitem{Murty}
M.~R. Murty and J.~Esmonde.
\newblock {\em Problems in algebraic number theory}, volume 190 of {\em Graduate Texts in Mathematics}.
\newblock Springer-Verlag, New York, second edition, 2005.

\bibitem{Neville}
E.~H. Neville.
\newblock The structure of {F}arey series.
\newblock {\em Proc. London Math. Soc. (2)}, 51:132--144, 1949.

\bibitem{Niederreiter}
H.~Niederreiter.
\newblock The distribution of {F}arey points.
\newblock {\em Math. Ann.}, 201:341--345, 1973.

\bibitem{Piatetski}
I.~I. Pyatetski-Shapiro.
\newblock On the distribution of prime numbers in sequences of the form {$[f(n)]$}.
\newblock {\em Mat. Sbornik N.S.}, 33(75):559--566, 1953.

\bibitem{Rademacher}
H.~Rademacher.
\newblock On the {P}hragm\'{e}n-{L}indel\"{o}f theorem and some applications.
\newblock {\em Math. Z.}, 72:192--204, 1959/60.

\bibitem{MR1820915}
J.~Rivat and P.~Sargos.
\newblock Nombres premiers de la forme {$\lfloor n^c\rfloor$}.
\newblock {\em Canad. J. Math.}, 53(2):414--433, 2001.

\bibitem{Rudnick}
Z.~Rudnick and P.~Sarnak.
\newblock Zeros of principal {$L$}-functions and random matrix theory.
\newblock {\em Duke Math. J.}, 81(2):269--322, 1996.

\bibitem{MR2144965}
E.~J. Scourfield.
\newblock On ideals free of large prime factors.
\newblock {\em J. Th\'{e}or. Nombres Bordeaux}, 16(3):733--772, 2004.

\bibitem{Tenenbaum}
G.~Tenenbaum.
\newblock {\em Introduction to analytic and probabilistic number theory}, volume 163 of {\em Graduate Studies in Mathematics}.
\newblock American Mathematical Society, Providence, RI, third edition, 2015.
\newblock Translated from the 2008 French edition by Patrick D. F. Ion.

\bibitem{Titchmarsh}
E.~C. Titchmarsh.
\newblock {\em The theory of the {R}iemann zeta-function}.
\newblock The Clarendon Press, Oxford University Press, New York, second edition, 1986.
\newblock Edited and with a preface by D. R. Heath-Brown.

\bibitem{VaughanR}
R.~C. Vaughan.
\newblock On the distribution of {$\alpha p$} modulo {$1$}.
\newblock {\em Mathematika}, 24(2):135--141, 1977.

\bibitem{Vinogradov}
I.~M. Vinogradov.
\newblock {\em The method of trigonometrical sums in the theory of numbers}.
\newblock Dover Publications, Inc., Mineola, NY, 2004.
\newblock Translated from the Russian, revised and annotated by K. F. Roth and Anne Davenport, Reprint of the 1954 translation.

\bibitem{Xiao}
J.~Xiao.
\newblock {\em Distribution of some arithmetic sequences}.
\newblock ProQuest LLC, Ann Arbor, MI, 2013.
\newblock Thesis (Ph.D.)--University of Illinois at Urbana-Champaign.

\bibitem{Xiong}
M.~Xiong and A.~Zaharescu.
\newblock Pair correlation of rationals with prime denominators.
\newblock {\em J. Number Theory}, 128(10):2795--2807, 2008.

\bibitem{Zaharescu}
M.~Xiong and A.~Zaharescu.
\newblock Correlation of fractions with divisibility constraints.
\newblock {\em Math. Nachr.}, 284(2-3):393--407, 2011.

\end{thebibliography}
\end{document}